\documentclass{article}[12pt]
\usepackage{graphicx} 

\usepackage[margin=1in]{geometry}
\usepackage{setspace}
\doublespacing


\RequirePackage{amsthm,amsmath,amsfonts,amssymb}
\RequirePackage[colorlinks,citecolor=blue,urlcolor=blue]{hyperref}
\RequirePackage{graphicx}

\usepackage{bm}
\usepackage{array}
\usepackage{enumerate}
\usepackage{diagbox}
\usepackage{lscape}
\usepackage{multirow}
\usepackage{natbib}

\theoremstyle{plain}

\newtheorem{theorem}{Theorem}[section]
\newtheorem{lemma}[theorem]{Lemma}

\theoremstyle{remark}
\newtheorem{definition}[theorem]{Definition}
\newtheorem{example}{Example}


\newtheorem{remark}[theorem]{Remark}

\newtheorem{proposition}[theorem]{Proposition}

\newtheorem{assumpA}{Assumption}
\newtheorem{assumpB}{Assumption}
\newtheorem{assumpC}{Assumption}
\newtheorem{assumpD}{Assumption}
\newtheorem{assumpE}{Assumption}

\begin{document}


\begin{center} {\bf\Large  Heavy-tailed $p$-value combinations\\ from the perspective of extreme value theory}\end{center}
	\centerline{\textsc{Yeonwoo Rho$^1$}\footnote{
	 Email: Y. Rho (yrho@mtu.edu)}}
		
		\bigskip
	\centerline {$^1$Michigan Technological University}
	\bigskip
	
\begin{abstract}
Handling multiplicity without losing much power has been a persistent challenge in various fields that often face the necessity of managing numerous statistical tests simultaneously. Recently, $p$-value combination methods based on heavy-tailed distributions, such as a Cauchy distribution, have received much attention for their ability to handle multiplicity without the prescribed knowledge of the dependence structure. This paper delves into these types of $p$-value combinations through the lens of extreme value theory. Distributions with regularly varying tails, a subclass of heavy tail distributions, are found to be useful in constructing such $p$-value combinations. Three $p$-value combination statistics (sum, max cumulative sum, and max) are introduced, of which left tail probabilities are shown to be approximately uniform when the global null is true. The primary objective of this paper is to bridge the gap between current developments in $p$-value combination methods and the literature on extreme value theory, while also offering guidance on selecting the calibrator and its associated parameters.
\\
		\textit{key words}: Extreme value theory; Heavy-tailed distribution; M-merging function; P-value combination.
		\\
\end{abstract}

\section{Introduction}\label{sec:intro}
Multiple comparisons have been a long-standing issue in statistics and related areas. There is a recent surge of interest in addressing multiplicity via $p$-value combinations that are robust to dependence among individual tests. These methods apply a function such as a convex combination to transformed $p$-values. Depending on what function and transformation are used, the random behaviors of the combined statistics under the null vary. This idea traces back to Fisher's method 
\citep{fisher1925} and Stouffer's $Z$-score \citep{stouffer1949american}, which are developed for independent $p$-values. 
\cite{wilson2019harmonic}, \cite{liu2020cauchy}, and \cite{vovk2020combining} have reinvigorated this already popular topic by introducing tools and frameworks that can handle dependent, sometimes perfectly correlated, $p$-values without the need for any special dependence modeling or estimation.

\cite{wilson2019harmonic} and \cite{liu2020cauchy}, while developed independently, share a similar idea. Both consider a convex combination of heavy-tail transformed $p$-values. \cite{wilson2019harmonic} is based on Pareto distribution with shape parameter 1, while \cite{liu2020cauchy} uses the standard Cauchy distribution. \cite{wilson2020generalized} extended \cite{wilson2019harmonic} to allow other shape parameters, and \cite{ling2022stable} extended \cite{liu2020cauchy}'s method to a stable distribution family. \cite{chen2023trade} pointed out the similarity between \cite{wilson2019harmonic} and \cite{liu2020cauchy}. \cite{fang2024heavy} proposed a wider class of transformation functions based on distributions with regularly varying tails. 

Above mentioned works mostly concern the tail probability of combined $p$-values under the null, requiring a smaller significance rate in order to achieve an accurate size of a test. For instance, as \cite{wilson2019harmonic} notes in their equation [5], the tail probability of harmonic mean p (HMP) is approximately uniform as long as this value is small enough, or in other words, in its left tail.   \cite{vovk2020combining} and related papers took a slightly different approach. \cite{vovk2020combining,vovk2021values,vovk2022admissible} built a framework, introducing the notion of ``precise merging functions." Precise merging functions govern the entire domain of support of $p$-values, not just its left tail. \cite{vovk2020combining}'s framework is indeed closely related to the rest of the recent literature in this field, as discussed in \cite{chen2023trade}.

This paper is closely related to the literature with the tail probability approach. As \cite{fang2024heavy} explored, this approach is valid as long as the {\it calibrator}, following \cite{vovk2020combining}'s notion, is related to a distribution function with regularly varying tails, and if the $p$-values from multivariate normal statistics with any dependence structure is considered. However, many interesting questions have not been properly answered yet. For instance, there has been no theoretical guidance on the choice of calibrator or the regular variation index in the literature. It is not clear if this result can be further extended to a wider class of heavy-tailed distributions. The tail probability argument is only valid for small significance levels, but it has not been investigated how small significance levels should be. These methods are so far limited to summations or convex combinations. \cite{vovk2020combining} showed that their framework is also valid for a wider class of merging functions, including order statistics, which establishes a connection to other popular $p$-value combinations such as the Bonferroni and the Simes methods. The close connection between the tail probability approaches and  \cite{vovk2020combining}'s framework suggests a potential breakthrough in this area. 

This paper aims to find answers to some of these questions based on recent advances in extreme value theory. In extreme value theory, it is well-known that the tail probability of the sum of independent heavy-tailed random variables is equivalent to that of the sum of tail probabilities. Similar observations can be made for the maximum and the maximum of the cumulative sums of independent heavy-tailed random variables. In particular, the equivalence of tail probabilities of the sum and of the maximum for heavy-tailed random variables with positive support is called the {\it catastrophe principle} \citep[Definition 3.1]{nair2022fundamentals}, and the {\it subexponential} distributions plays a key role in this relationship \citep[Lemma 3.1]{nair2022fundamentals}. This motivates a search for other forms of merging functions such as maximum as well as a wider class of calibrators beyond the regularly varying distributions. 

The contribution of this paper is threefold. First, we adapt the existing literature on tail probabilities of the sum of dependent random variables with heavy-tailed distributions to the realm of $p$-value combination. Thanks to the recent developments in this field, we could widen the class of calibrators, consider the diverging number of $p$-values case, and introduce random weights.
Second, we adapt \cite{vovk2020combining}'s framework and connected it to the tail probability approach. \cite{vovk2020combining}'s framework has been somewhat distant from, for instance, \cite{fang2024heavy}. This paper establishes a  direct link between the two approaches, extending \cite{vovk2020combining}'s notion of asymptotically precise tests. Third, we offer guidance, informed partly by theory, on selecting calibrators and their associated parameters for different combinations of significance levels and the number of tests.

The rest of this paper is organized as follows. Section \ref{sec:heavytail} introduces the three combination statistics (sum, cumulative sum, and max) inspired by the catastrophe principle in extreme value theory. Some key concepts in extreme value theory are also briefly introduced. Section \ref{sec:subexponential} organizes assumptions and results in the literature on the equivalence of tail probability of dependent and heavy-tailed random variables, which are used to prove the validity of the three combination statistics. Our results allow subexponential calibrators and random weights, which have not yet been introduced in the $p$-value combination literature. The behaviors of the $p$-value combination methods under the presence of tests with perfect or strong positive correlations are also analyzed. Section \ref{sec:regvar_Mmergfun} limits the calibrator class to regularly varying and emphasize the connection between our framework and that of \cite{vovk2020combining}. Section \ref{sec:ninft} proves the validity of the sum-combination test with regularly varying calibrators which is robust to the dependence among tests. The validity of the max-combination test is also discussed but only for an independent case. Section \ref{sec:gammachoice} further leads the discussion to the choice of the index $\gamma$ of a regularly varying calibrator in relation with the significance level $\alpha$. Section \ref{sec:simulation} provides a simulation which explores the finite sample sizes and powers of the $p$-value combination tests. 
All proofs are relegated to the Appendix.


Throughout this paper, we use $\bm{P}$ and $\bm{E}$ to indicate a probability measure and its expectation. The probability spaces the probability measure is based on may vary depending on the context. The maximum among $X_1,\ldots,X_n$ is denoted as $\bigvee_{i=1}^n X_i=\max_{i=1,\ldots,n}X_i$. 
The symbol $f(t)\sim g(t)$ as $t\to\infty$ and $f(\epsilon)\sim g(\epsilon)$ as $\epsilon\to0$ signify $\lim_{t\uparrow\infty}\frac{f(t)}{g(t)}=1$ and $\lim_{\epsilon\downarrow0}\frac{f(\epsilon)}{g(\epsilon)}=1$, respectively. We will often omit the qualifier $t\to\infty$ or $\epsilon\to0$ when it is clear in the context. The set of real numbers is denoted as $\mathbb{R}$. The standard normal cumulative distribution function is denoted as $\Phi(x)=\frac{1}{\sqrt{2\pi}}\int_{-\infty}^x e^{-s^2/2}ds$. The notion $\mathbb{I}_A$ is an indicator function that takes value 1 if the statement $A$ is true and 0 otherwise. The function $\Gamma(z)=\int_0^\infty t^{z-1}e^{-t}dt$ is the usual gamma function.


\section{Heavy-tail combination tests}\label{sec:heavytail}

For any positive integer $n$, consider a vector of uniform random variables $\bm{U}_n=(U_{1;n},\ldots,U_{n;n})$, where $U_{i;n}$ follow a uniform distribution between 0 and 1, are defined on the same probability space, and are possibly dependent. These uniform random variables $U_{i;n}$ represent $p$-values $p_{i;n}$ of the $n$ subtests under the null. If the $i$th null hypothesis $H_{0i}$ is true, $p_{i;n}$ is distributed uniformly between 0 and 1, and we denote $p_{i;n}=U_{i;n}$ to emphasize that these $p$-values are under their respective nulls. Under the global null hypothesis $\bm{H}_{0}$,  that is, if all  $H_{0i}$, $i=1,\dots,n$ are true, all elements in $\bm{p}_n=(p_{1;n},\ldots,p_{n;n})'=\bm{U}_n$ marginally follow a uniform distribution.
Following \cite{vovk2020combining,vovk2021values,vovk2022admissible}, we can understand $U_{i;n}$ be random variables defined on a fixed atomless probability space $(\Omega,\mathcal{A},P)$. It is easy to see that $U_{i;n}$ are {\it p-variables}, since $P(U_{i;n}\leq \epsilon)= \epsilon$ for any $\epsilon\in(0,1)$. 
Let $X$ be a generic random variable with a continuous cumulative distribution function (CDF) $F$; that is, $F(x)=F_X(x)=\bm{P}(X<x)$. We denote the survival function as $\overline{F}(x)=1-F(x)$. Since we limit $F$ be continuous, the inverse cdf ${F}^{-1}(p)$ and the inverse survival function $\overline{F}^{-1}(p)$ are well-defined. 
Define $X_{i;n}=X_{i;n}(p_{i;n})=F^{-1}(1-p_{i;n})=\overline{F}^{-1}(p_{i;n})$ for $i=1,\ldots,n$, and  $\bm{X}_{n}=\bm{X}_{n}(\bm{p}_n)=(X_{1;n},\ldots,X_{n;n})'$. 
 Under the global null,
$\bm{P}(X_{i;n}<x)=P\left(\overline{F}^{-1}(U_{i;n})<x\right)=P\left(U_{i;n}>\overline{F}(x)\right)=F(x)$. We refer $\overline{F}^{-1}(\cdot)$ as the {\it calibrator}, following \cite{vovk2020combining,vovk2021values,vovk2022admissible}.

Let $\bm{w}_n=(w_{1;n},\ldots,w_{n;n})'$ be the  weight vector. From here, we write $X_{i;n}=X_i$, $p_{i;n}=p_i$, $U_{i;n}=U_i$, and $w_{i;n}=w_i$, suppressing the dependence on $n$ when there is no confusion; 
We propose the following three combination statistics:

\begin{align}
    S_{1,F,w}(\bm{p}_{n})&=\sum_{i=1}^nw_{i}X_{i}=\sum_{i=1}^nw_{i}\overline{F}^{-1}(p_{i})
    \\ S_{2,F,w}(\bm{p}_{n})&= \bigvee_{i=1}^n\left(\sum_{j=1}^iw_{j}X_{j}\right)= \bigvee_{i=1}^n\left(\sum_{j=1}^iw_{j}\overline{F}^{-1}(p_{j})\right)
    \\ S_{3,F,w}(\bm{p}_{n})&=\bigvee_{i=1}^nw_{i}X_{i}=\bigvee_{i=1}^nw_{i}\overline{F}^{-1}(p_{i})
\end{align}

The Cauchy combination test by \cite{liu2020cauchy} is based on $S_{1,F,w}(\bm{p}_n)$ with calibrator $\overline{F}^{-1}(p)=
\tan\{(0.5-p)\pi\}$, which is the inverse of standard Cauchy survival function. \cite{liu2020cauchy} assumes that, under the null, $p_i=2-2\Phi(|Y_i|)=U_i$ are uniform when $Y_i$ are marginally standard normal with arbitrary dependence structure. \cite{liu2020cauchy}'s Theorem 1 states that $\bm{P}\left(S_{1,F,w}(\bm{U}_n)>t\right)\sim\sum_{i=1}^nw_i\bm{P}(X>t)$, where $X$ is a standard Cauchy random variable.  
Note that we write $S_{j,F,w}(\bm{U}_n)$ instead of $S_{j,F,w}(\bm{p}_n)$ when emphasizing the given statistic is considered under the global null hypothesis. \cite{fang2024heavy} proved a similar statement for a wider class of $F$ that includes the standard Cauchy, introducing 
\begin{equation}\label{eq:subexponential}\bm{P}(X_1+\cdots+X_n>t)\sim n\bm{P}(X_1>t)\end{equation}
as $t\to\infty$ in their equation (3.1) as ``an important property of regularly varying tailed distributions." Although \cite{fang2024heavy}'s results are limited to the regularly varying class, it is well-known that equation (\ref{eq:subexponential})  holds for an even wider class of heavy-tailed distributions. In fact, equation (\ref{eq:subexponential}) is the very definition of {\it subexponential} distributions. A distribution $F$ on $\mathbb{R}$ is {\it long-tailed}, $F\in\mathcal{L}$, if $\overline{F}(s)>0$ for all $s$ and $\overline{F}(t+s)\sim\overline{F}(t)$ as $t\to\infty$ for any $s>0$.  A distribution $F$ is said to be {\it subexponential} and denoted as $F\in
 \mathcal{S}$ if $F$ is long-tailed and equation (\ref{eq:subexponential}) holds for independent and identically distributed (iid) $X_i$. Most well-known heavy-tailed distributions are subexponential, and therefore, long-tailed. The subexponential $F$ also leads to the {\it catastrophe principle} or the {\it principle of a single big jump}, 
 $\bm{P}\left(S_{1,F,w}(\bm{U}_n)>t\right)\sim
\bm{P}\left(S_{3,F,w}(\bm{U}_n)>t\right)$, where the sum and max have approximately the same tail probabilities. See, for example, Equation (1.6) of \cite{foss2013subexponential} and Definition 3.1 and equation (3.4) of \cite{nair2022fundamentals}.

When $X_i$ are iid and nonnegative, with equal weights $w_1=\cdots=w_n$, it is well-known that for all $j=1,2,3$,
\begin{equation}\label{eq:indep_tailprob}
\bm{P}\left(S_{j,F,w}(\bm{U}_n)>t\right)\sim  \sum_{i=1}^n \bm{P}\left(w_iX_i>t\right).
 \end{equation}
See, for instance, Section 2 of \cite{geluk2006tail}.  \cite{tang2003randomly} further showed that equation (\ref{eq:indep_tailprob}) holds true even when the weights $w_i$ are random, as long as $\{X_i\}_{i=1}^n$ are independent. Understanding tail probabilities of $S_{j,F,w}(\bm{U}_{n})$ with heavy-tailed $F$ and dependent $X_i$ has been an active research topic. We will utilize this literature to advance theoretical developments of $p$-value combination methods in this paper.

We finish this section with a few other key concepts. A distribution $F$ is {\it dominatedly varying} tailed, $F\in\mathcal{D}$, if 
$$\limsup_{t\to\infty}\frac{\overline{F}(at)}{\overline{F}(t)}<\infty$$
for some $0<a<1$. An important subclass of $\mathcal{D}\cap\mathcal{L}$ is {\it consistently varying} tailed distributions, denoted as $F\in\mathcal{C}$, if 
$$\lim_{a\uparrow1}\limsup_{t\to\infty}\frac{\overline{F}(at)}{\overline{F}(t)}=1,~~~\mbox{or equivalently}~~\lim_{a\downarrow1}\liminf_{t\to\infty}\frac{\overline{F}(at)}{\overline{F}(t)}=1.$$
In particular, if the limit of $\frac{\overline{F}(at)}{\overline{F}(t)}$ exists, the related theory can be more intuitive. This motivates understanding the {\it scale invariance} and {\it regularly varying tails}.
A
 distribution $F$ is {\it scale invariant} if there exists $t_0>0$ and a continuous positive function $g$ such that $$\frac{\overline{F}(at)}{\overline{F}(t)}=g(a)$$ for all $t$ and $a$ that satisfy $t>t_0$ and $a t>t_0$.
A distribution $F$ is {\it asymptotically scale invariant} if there exists a continuous positive function $g$ such that $$\frac{\overline{F}(at)}{\overline{F}(t)}\sim g(a)$$ as $t\to\infty$ for any $a>0$ \citep[Definitions 2.1 and 2.2]{nair2022fundamentals}.
The only scale invariant distribution has a power-law tail such that $\overline{F}(t)=ct^{-\gamma}$ for large enough $t$ with $c\geq0$ and $\gamma>0$ \citep[Theorem 2.1]{nair2022fundamentals}. For instance, a Pareto distribution,  $\overline{F}(t)=(t/t_m)^{-\gamma}$, $t>t_m$, with scale parameter $t_m>0$ and shape parameter $\gamma>0$, is scale invariant. We will set the scale parameter $t_m=1$ from the Pareto $F$ in this paper for brevity.
A distribution $F$ defined on $\mathbb{R}$ is {\it regularly varying at infinity with index $\gamma>0$} if
\begin{equation}\label{eq:regvardef}\frac{\overline{F}(at)}{\overline{F}(t)}\sim a^{-\gamma}\end{equation}
as $t\to\infty$ for any $a>0$ \citep[page 32]{foss2013subexponential}, and we write $F\in \mathcal{RV}_{-\gamma}$.
It is trivial that regularly varying distributions are asymptotically scale invariant. In addition, the asymptotically scale invariant distributions should have regularly varying tails. That is, if the ratio $\frac{\overline{F}(at)}{\overline{F}(t)}$ is controlled at its tail, this asymptotic ratio should follow a power law in $a$.  See, for instance, Proposition 0.4 of \cite{resnick2008extreme} and Theorem 2.2 of \cite{nair2022fundamentals}.  Extending the regular variation slightly, if, for any $a>1$,  
$$a^{-\beta}\leq \liminf_{t\to\infty}\frac{\overline{F}(at)}{\overline{F}(t)}\leq\limsup_{t\to\infty}\frac{\overline{F}(at)}{\overline{F}(t)}\leq a^{-\alpha}$$
for some $0\leq \alpha\leq \beta<\infty$,
we say $F$ has {\it extended regularly varying} tail and write $F\in \mathcal{ERV}(-\alpha,-\beta)$.
Note that $\mathcal{RV}_{-\gamma}\subset \mathcal{ERV}\subset\mathcal{C}\subset \mathcal{D}\cap\mathcal{L}\subset\mathcal{S}\subset\mathcal{L}$.

It is well-known that $F\in \mathcal{RV}_{-\gamma}$ satisfies the {\it balance condition}: $\overline{F}(t)\sim p_Ft^{-\gamma}L(t)$ and $F(-t)\sim q_Ft^{-\gamma}L(t)$ as $t\to\infty$, where $p_F+q_F=1$ and $p_F,q_F\geq0$. Here, the function $L(t)$ satisfies 
$\lim_{t\to\infty}\frac{L(at)}{L(t)}=1$ for any $a>0$ and is said to be {\it slowly varying at infinity}. 
Examples of slowly varying functions include a function with a finite limit $\lim_{x\to\infty}L(x)=l_F\in(0,\infty)$ but is not limited to this class of functions. For instance, $L(x)=\log(x)$ does not converge to a finite limit but still is slowly varying.
\cite{fang2024heavy} directly assumes the balance condition, for their additive $p$-value combination tests, as the condition for the regularly varying distributions in their assumption (A3).

\section{Subexponential Calibrators with a Finite Number of Tests}\label{sec:subexponential}
 

\cite{fang2024heavy} showed that, extending \cite{liu2020cauchy}'s result, the size of the global test based on  the sum-combination statistic $S_{1,F,w}(\bm{U}_n)$ is well-controlled at its extreme tail, for any fixed number of tests $n$  if $F$ is regularly varying.
In this section, we demonstrate that \cite{fang2024heavy}'s claim can be further extended to subexponential calibrators, as well as to the max-combination statistic  $S_{3,F,w}(\bm{U}_n)$ and to the max-cumulative-sum combination statistic $S_{3,F,w}(\bm{U}_n)$. 

Recall that our calibrator is defined as an inverse of a survival function $\overline{F}^{-1}(\cdot)$. We often refer to $F$ as  {\it calibrator} as well for convenience, although, technically, $F(\cdot)$ is one minus the inverse of a calibrator; $F(\cdot)=1-\overline{F}(\cdot)$. We  assume that $F$ is continuous so that there are always well-defined one-to-one relations among $F$, $\overline{F}$, $F^{-1}$, and $\overline{F}^{-1}$ for a simpler exposition in theory.
Our results are based on recent advances in tail probability literature and can accommodate random weights, which has not yet been allowed in $p$-value combination literature. The following assumptions on the weights $\{w_{1},\ldots,w_{n}\}$ are often considered in the literature.
\begin{assumpA}\label{assump:w_equal}
The weights $\{w_{1},\ldots,w_{n}\}$  take equal values: that is, $w_{i}=1/n$ for all $i=1,\ldots,n$.
\end{assumpA}
\begin{assumpA}\label{assump:w_nonrandom}
The weights $\{w_{1},\ldots,w_{n}\}$ are nonrandom with $\sum_{i=1}^nw_{i}=1$ and $w_{i}\in(0,1)$. 
\end{assumpA}
\begin{assumpA}\label{assump:w_random}
The weights $\{w_{1},\ldots,w_{n}\}$ are possibly random with 
  $\bm{P}(0<w_{i}<1)=1$ and $\sum_{i=1}^n\bm{E}w_{i}=1$. The weights $\{w_{i}\}_{i=1}^n$ are independent from  the calibrated $p$-values $\{X_{i}\}_{i=1}^n$.
\end{assumpA}
It is evident that Assumption \ref{assump:w_equal} is a special case of Assumption \ref{assump:w_nonrandom}, and likewise, Assumption \ref{assump:w_nonrandom} is a special case of Assumption \ref{assump:w_random}. It is worth noting that Assumption \ref{assump:w_random} imposes no limitations on the strength or form of the dependence among $\{w_i\}_{i=1}^n$, as long as the weights are chosen independently from $\{X_i\}_{i=1}^n$. Conversely, the dependence structure of $\{X_i\}_{i=1}^n$ does require some limitations.

We introduce a few dependence conditions on  $\{X_i\}_{i=1}^n$ that are often adapted in the literature. \cite{sibuya1960bivariate} introduced the {\it upper tail asymptotic independence (UTAI)} and has played a crucial role in the asymptotic tail equivalence literature. 
\begin{assumpB}\label{assmp:UTAI}
The calibrated $p$-values $\{X_i\}_{i=1}^n$ are upper tail asymptotically independent (UTAI): that is, for $1\leq i\neq j\leq n$, 
$$\lim_{x\to\infty}\bm{P}(X_i>x|X_j>x)=0.$$
\end{assumpB}
\cite{davis1996limit} 's Lemma 1 and \cite{chen2009sums}'s results assume UTAI. In particular, \cite{fang2024heavy}, in their supplementary material S1, proved that with their multivariate normal assumption, the calibrated $p$-values with regularly varying tails are UTAI. 

\cite{geluk2009asymptotic} introduced a similar but slightly stronger concept, {\it tail asymptotic independence (TAI)}, which are used by some subsequent literature such as \cite{li2013pairwise} and \cite{cang2017extremal}.
\begin{assumpB}\label{assmp:TAI}
The calibrated $p$-values $\{X_i\}_{i=1}^n$ are tail asymptotically independent (TAI): that is, for $1\leq i\neq j\leq n$,
$$\lim_{\min(x_i,x_j)\to\infty}\bm{P}(|X_i|>x_i|X_j>x_j)=0.$$
\end{assumpB}

While Assumptions \ref{assmp:UTAI} and \ref{assmp:TAI} cover a wide range of dependence structure, as pointed out by \cite{geluk2009asymptotic}, the UTAI and TAI assumptions are not enough to extend $F$  beyond dominatedly varying tailed distributions. In order to cover important heavy-tailed distributions such as Weibull and log-normal that are subexponential but not dominatedly varying, \cite{geluk2009asymptotic} and \cite{ko2008sums} proposed the following assumptions.
\begin{assumpB}[\cite{geluk2009asymptotic}]\label{assmp:geluktang}
The calibrated $p$-values $X_i$ have distribution functions $F_i$, respectively for all $i=1,\ldots,n$. For any $n\geq2$, there exists positive constants $t_0=t_0(n)$ and $M=M(n)$ such that, for all $i=1,\ldots,n$, $x_i>t_0$, and $x_j>t_0$ with $j\in J$,
$$\bm{P}(|X_i|>x_i|X_j=x_j~{\rm with}~j\in J)\leq M\overline{F}_i(x_i),$$
where $J=\{1,\ldots,n\}/\{i\}$.
\end{assumpB}
\begin{assumpB}[\cite{ko2008sums}]\label{assmp:kotang}
Let $X_i^+=\max(X_i,0)$, where $X_i$ are the calibrated $p$-values.
There exist positive constants $t_0=t_0(n)$ and $M=M(n)$ such that, for all $t>t_0$ and $j\in\{2,\ldots,n\}$,
$$\sup_{t_0\leq x\leq t}\frac{\bm{P}(\sum_{i=1}^{j-1}X_i^+>t-x|X_j=x)}{\bm{P}(\sum_{i=1}^{j-1}X_i^+>t-x)}\leq M.$$
\end{assumpB}

When $n=2$, these two conditions are essentially equivalent. However, when $n>2$, there is no clear relationship between Assumptions \ref{assmp:geluktang} and \ref{assmp:kotang}, and are often used in parallel in literature, as can be seen in, for instance, \cite{jiang2014max} and \cite{gao2019asymptotic}. In particular, Example 2.1 of \cite{jiang2014max} demonstrated that Assumption \ref{assmp:kotang} does not imply \ref{assmp:geluktang}.

While Assumption \ref{assmp:kotang} does not allow too strong positive dependence, Assumption \ref{assmp:kotang} covers a wide range of dependence structures such as stochastically decreasing or through a copula structure with a bounded second-order mixed derivative. See Remarks 2.1, 2.3, and 2.4 of \cite{ko2008sums} for more details. Remark 2.2 of \cite{ko2008sums} states that Assumption \ref{assmp:kotang} implies the upper tail asymptotic independence (UTAI), but not the other way around. 

Now we compare the above assumptions with those in the $p$-value combination literature.
\cite{vovk2020combining} and \cite{vovk2022admissible} allow any kind of dependence, which, obviously, is much weaker than Assumptions \ref{assmp:UTAI}--\ref{assmp:kotang}. These assumptions in Assumptions \ref{assmp:UTAI}--\ref{assmp:kotang}, however, are somewhat comparable to those of \cite{liu2020cauchy} and 
 \cite{fang2024heavy}, which assume that the $p$-values are from $Z$-tests, $p_i=2-2\Phi(|Y_i|)$, where $Y_i$ are from a multivariate normal distribution with an arbitrary correlation matrix.
In particular, \cite{fang2024heavy}'s proof relies on the UTAI property (Assumption \ref{assmp:UTAI}) of multivariate normality, meaning that they actually need a weaker condition than what they proposed in their paper. While this $Z$-test setup satisfies Assumption \ref{assmp:UTAI}, as shown in \cite{fang2024heavy}, this $Z$-test setup does not satisfy Assumptions \ref{assmp:geluktang} nor \ref{assmp:kotang}.
The following theorem proves such a result when $n=2$.
\begin{theorem}\label{thm:bivariate_normal}
     Let $U_1=2\overline{\Phi}(|Y_1|)$ and $U_2=2\overline{\Phi}(|Y_2|)$, where $(Y_1,Y_2)'$ follows a bivariate normal distribution with a mean vector of $(0,0)'$ and the covariance matrix $\Sigma=\bigg(\begin{array}{ll} 1&\rho\\ \rho&1\end{array}\bigg)$ with $0<    |\rho|<1$. Here, $\overline{\Phi}(y)=1-\Phi(y)=\int_{y}^\infty\phi(x)dx$, where $\phi(x)=\frac{1}{\sqrt{2\pi}}\exp(-x^2/2)$ is the standard normal density. Let $X_1=\overline{F}^{-1}(U_1)$ and $X_2=\overline{F}^{-1}(U_2)$. As long as $F$ satisfies $\overline{F}(c)\neq 0$ for any $c>0$,  $\{X_1,X_2\}$ cannot satisfy Assumptions \ref{assmp:geluktang} nor \ref{assmp:kotang}.
\end{theorem}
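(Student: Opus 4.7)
The plan is to exploit the Gaussian conditional structure: given $|Y_2| = y_2$, we have $Y_1 \mid Y_2 = \pm y_2 \sim N(\pm \rho y_2, 1-\rho^2)$, so $|Y_1|$ concentrates tightly around $|\rho| y_2$ with only $O(1)$ fluctuation. When $y_2 \to \infty$ this makes the conditional upper tail of $X_1$ far heavier than its unconditional tail, violating the uniform ratio bounds in both assumptions. Define $\xi(x) := \overline{\Phi}^{-1}(\overline{F}(x)/2)$; since $\overline{F}$ is continuous with $\overline{F}(x) \in (0,1)$ for large $x$ by hypothesis, $\xi$ is strictly increasing and $\xi(x) \to \infty$ as $x \to \infty$. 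The event $\{X_i > x\}$ coincides with $\{|Y_i| > \xi(x)\}$, so conditioning on $X_2 = x_2$ is the same as conditioning on $|Y_2| = \xi(x_2)$, under which $Y_2 = \pm \xi(x_2)$ each with probability $1/2$ by symmetry.

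For Assumption \ref{assmp:geluktang}, fix any $\eta \in (0, |\rho|)$ and let $x_1, x_2 \to \infty$ along the curve $\xi(x_1) = \eta\, \xi(x_2)$. The identity $\bm{P}(|Y_1| > y_1 \mid Y_2 = \epsilon y_2) = \overline{\Phi}((y_1 - \rho\epsilon y_2)/\sqrt{1-\rho^2}) + \Phi((-y_1 - \rho\epsilon y_2)/\sqrt{1-\rho^2})$, with $y_1 = \eta\xi(x_2)$ and $y_2 = \xi(x_2)$, has first term tending to $1$ on the branch $\epsilon = \mathrm{sgn}(\rho)$ (its argument is $-(|\rho|-\eta)\xi(x_2)/\sqrt{1-\rho^2} \to -\infty$) and second term tending to $1$ on the branch $\epsilon = -\mathrm{sgn}(\rho)$. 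Averaging the two branches gives $\bm{P}(|X_1| > x_1 \mid X_2 = x_2) \geq \bm{P}(X_1 > x_1 \mid X_2 = x_2) \to 1$, whereas $\overline{F}_1(x_1) = 2\overline{\Phi}(\eta\xi(x_2)) \to 0$. The ratio thus blows up, ruling out any finite $M$.

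For Assumption \ref{assmp:kotang} with $n = 2$, the only relevant index is $j = 2$ and the sum reduces to $X_1^+$. Define $s(x) := \xi^{-1}(|\rho|\xi(x))$, which is well-defined for large $x$ and satisfies $s(x) < x$ since $|\rho| < 1$, and set $t = x + s(x)$. The numerator equals $\bm{P}(X_1 > s(x) \mid X_2 = x) = \bm{P}(|Y_1| > |\rho|\xi(x) \mid |Y_2| = \xi(x))$; the same Gaussian computation with $y_1 = |\rho| y_2$ makes exactly one term in each branch tend to $1/2$ and the other to $0$, giving the limit $1/2$. The denominator is $\overline{F}(s(x)) = 2\overline{\Phi}(|\rho|\xi(x)) \to 0$, so the ratio diverges and the supremum in Assumption \ref{assmp:kotang} grows without bound as $t \to \infty$. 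The only delicate step in either argument is the sign bookkeeping between $\epsilon$ and $\mathrm{sgn}(\rho)$ when averaging the two mixture branches; both assumptions fail for the same structural reason that strong correlation in $(Y_1, Y_2)$ is transmitted through the two-sided calibration $U_i = 2\overline{\Phi}(|Y_i|)$, regardless of the choice of $F$.
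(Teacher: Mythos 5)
Your proof is correct and follows essentially the same route as the paper's: both reduce the conditioning to $|Y_2|=\overline{\Phi}^{-1}(\overline{F}(x_2)/2)$, use the Gaussian conditional tail formula $\bm{P}(|Y_1|>y_1\,\big|\,|Y_2|=y_2)=\overline{\Phi}\left(\frac{y_1-\rho y_2}{\sqrt{1-\rho^2}}\right)+\overline{\Phi}\left(\frac{y_1+\rho y_2}{\sqrt{1-\rho^2}}\right)$ (your two-branch mixture is identical to this after $\Phi(-a)=\overline{\Phi}(a)$), and then send $y_2\to\infty$ with $y_1$ a fraction of $|\rho|y_2$ so the conditional probability stays bounded away from zero while the unconditional tail vanishes. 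Your handling of Assumption \ref{assmp:kotang} via the explicit choice $t=x+s(x)$ with $\xi(s(x))=|\rho|\xi(x)$ is in fact spelled out more carefully than in the paper, which disposes of that case with a brief remark.
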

Considering the necessity of a more strict assumption than the UTAI when $F$ is subexponential, 
Theorem \ref{thm:bivariate_normal} provides one reason to stay with the distributions with regularly varying tails when designing a combination test.

We need one additional assumption to control the left tail behavior of $F$.
It is well-known that $$\mathcal{H}({F})=\left\{h~{\rm on}~[0,\infty):~h(t)\uparrow\infty,~\frac{h(t)}{t}\downarrow0~{\rm and}~ \overline{F}(t- h(t))\sim \overline{F}(t)\right\}$$
is a not empty if $F$ is long-tailed \citep{foss2013subexponential,geng2019tail,qian2022tail}.
\begin{assumpC}\label{assump:leftail} For any $h\in\mathcal{H}({F})$,
$$\lim_{t\to\infty}\sup_{x\in(0,b]}\frac{F(-h(t)/x)}{\overline{F}(t/x)}=0.$$
\end{assumpC}
This assumption requires left tails to be light enough. Any $F$ defined on the positive half line $\mathbb{R}^+$ satisfies Assumption \ref{assump:leftail} since $F(-h(t)/x)=0$ for any $t\geq 0$. However, a distribution with a heavier left tail may not satisfy Assumption \ref{assump:leftail}. 

\begin{remark}\label{remark:assumption_lefttail}{\rm
If $F$ belongs to a regularly varying distribution family, it is simple to check Assumption \ref{assump:leftail}. Recall that 
 a regularly varying distribution with index $\gamma>0$, $F\in \mathcal{RV}_{-\gamma}$, satisfies the balance condition: $\overline{F}(t)\sim p_Ft^{-\gamma}L(t)$ and $F(-t)\sim q_Ft^{-\gamma}L(t)$ as $t\to\infty$, where $p_F+q_F=1$ and $p_F,q_F\geq0$ and $L(t)$ is slowly varying.
 Using the balance condition,
$$\frac{F(-h(t)/x)}{\overline{F}(t/x)}\sim\frac{q_Fh(t)^{-\gamma}x^{\gamma}L(h(t)/x)}{p_Ft^{-\gamma}x^\gamma L(t/x)}
=\frac{q_F}{p_F}\left(\frac{t}{h(t)}\right)^{\gamma}\uparrow \infty$$ as $t\to\infty$ unless $q_F=0$.
This means that, while a regularly varying calibrator is not required to have positive support, it should at least have a very thin left tail to control the size properly under the subexponential umbrella. As mentioned in the Remark \ref{remark:lefttailass_powerside} below, it has been already observed that having $F$ with a thin left tail generally improves the power of the combined test. Assumption \ref{assump:leftail} provides an additional argument to favor  $F$ with as thin left tails as possible from a different angle. In addition, limiting too large negative values is also necessary for size control when we consider diverging $n$, as can be seen from Theorems \ref{thm:ninfty_chen},\ref{thm:ninfty_gao}, and \ref{thm:ninfty_geng} in Section \ref{sec:ninft}.
}\end{remark}

\begin{remark}\label{remark:lefttailass_powerside}{\rm
The necessity of a thin-left tail, similar to Assumption \ref{assump:leftail}, from the power perspective is already discussed in the literature.  A stable distribution with stability parameter $\gamma\in(0,2)$ and skewness parameter $\beta\in[-1,1]$ is $\mathcal{RV}_{-\gamma}$ with $p_F=1+\beta$ and $q_F=1-\beta$ \citep[Theorem 1.2]{nolan2020univariate}. \cite{ling2022stable} argued from their simulation that $\beta=1$ always has better power than $\beta<1$ when $\gamma$ is hold constant. \cite{fang2024heavy} made a similar observation for Cauchy-based calibrators. A Cauchy distribution belongs to $\mathcal{RV}_{-1}$ with a relatively heavier left tail, $p_F=q_F=0.5$, which does not satisfy Assumption \ref{assump:leftail}. \cite{fang2024heavy} observed the ``large negative penalty issue,"  which caused a numerical issue and loss of power. In particular, \cite{fang2024heavy} proposed the truncated Cauchy, for which the left tail is arbitrarily thinned out. The truncated Cauchy first chooses a threshold $0<\delta<0.5$ and sets any $p$-values greater than $1-\delta$ to be equal to $1-\delta$. Their calibrated p-values $X_i=\overline{F}^{-1}(p_i)$ do not follow a standard Cauchy, but there exists $t_\delta>0$ such that $F(-h(t)/x)=0$ for any $t>t_\delta$. The truncated Cauchy satisfies Assumption \ref{assump:leftail}, but involves an additional user-chosen parameter $\delta$.
}\end{remark}

Table \ref{table:literature} summarizes some asymptotic tail equivalence results involving dependent $X_i$ in the literature.

\begin{center}
\begin{table}
\begin{tabular}{c|m{20em}|c|c|c|c } 
 \hline\hline
$n$&Source & $j$ &$w_i$ & Dep. &  $F$ \\\hline 

\multirow{11}{*}{finite}&Lemma 2.1, \cite{davis1996limit} & 1 &\ref{assump:w_equal} &\ref{assmp:UTAI}& $F\in\mathcal{RV}_{-\gamma}$, p.s.\\ 
&Theorem 3.1, \cite{geluk2006tail} & 1,2,3 &\ref{assump:w_equal} & NA& $F\in\mathcal{D}\cap\mathcal{L}$, p.s$.^*$ \\ 
&Theorem 3.1(i), \cite{ko2008sums} & 1,2,3 &\ref{assump:w_equal} &\ref{assmp:kotang}& $F\in\mathcal{S}$, p.s.\\ 
&Theorem 3.1, \cite{geluk2009asymptotic} & 1 &\ref{assump:w_equal} &\ref{assmp:TAI}& $F\in\mathcal{D}\cap\mathcal{L}$\\ 
&Theorem 3.2, \cite{geluk2009asymptotic} & 1&\ref{assump:w_equal}& \ref{assmp:geluktang}& $F\in\mathcal{S}$\\ 

& Theorems 3.1, 3.2, \cite{chen2009sums} & 1&\ref{assump:w_equal}/\ref{assump:w_random}& \ref{assmp:UTAI}& $F\in\mathcal{C}$  \\
& Theorems 2.1, 2.3, \cite{li2013pairwise} & 1&\ref{assump:w_nonrandom}/\ref{assump:w_random}& \ref{assmp:TAI}& $F\in\mathcal{D}\cap\mathcal{L}$ \\
& Theorems 1.1, 1.2, \cite{jiang2014max}
&1,3&\ref{assump:w_equal}&\ref{assmp:kotang}&$F\in\mathcal{L}$,$F*F\in\mathcal{S}$\\
&  Theorem 1.1, \cite{cang2017extremal} & 1&\ref{assump:w_random}& \ref{assmp:TAI}& $F\in\mathcal{D}\cap\mathcal{L}$ \\
& Theorem 3.1, \cite{geng2019tail} & 1,2,3&\ref{assump:w_random}& \ref{assmp:kotang}& $F\in\mathcal{S}$, \ref{assump:leftail}\\
&  Theorem 1, \cite{qian2022tail} & 1,2,3&\ref{assump:w_random}& LWQD & $F\in\mathcal{S}^
*$, \ref{assump:leftail}\\
  \hline

\multirow{5}{*}{infinite}& Theorems 2.1, 2.2, \cite{liu2009precise} & 1&\ref{assump:w_equal}& END& $F\in\mathcal{C}$+alpha\\
& Theorem 3.3, \cite{chen2009sums} & 1&\ref{assump:w_random},\ref{assump:wi_ninfty_chen}& \ref{assmp:UTAI}& $F\in \mathcal{ERV}$, p.s. \\
&  Theorem 2, \cite{bae2017note} & 1&\ref{assump:w_nonrandom},\ref{assump:wi_ninfty_gao}& \ref{assmp:kotang}& $F\in\mathcal{RV}_{-\gamma}$, p.s.\\
&   Theorem 2.1, \cite{gao2019asymptotic} & 1&\ref{assump:w_nonrandom},\ref{assump:wi_ninfty_gao}& \ref{assmp:geluktang}/\ref{assmp:kotang}& $F\in\mathcal{C}$+alpha\\
&  Theorem 3.2, \cite{geng2019tail} & 1,2&\ref{assump:w_random},\ref{assump:wi_ninfty_yi}& \ref{assmp:kotang}& $F\in\mathcal{C}$, \ref{assump:leftail}\\
  \hline
  \hline
\end{tabular}\caption{Summary of asymptotic tail equivalence results in the literature. The ``$j$" column indicates which combination statistic is available for the tail equivalence in (\ref{eq:indep_tailprob}) with $j=1$ being the sum, $j=2$ the maximum cumulative sum, and $j=3$ the maximum; ``$w_j$" column indicates the assumptions on the weights; ``Dep." column indicates the assumptions for the dependence among $X_i$; $F$ column presents the assumptions on  $F$; ``p.s." indicates that $F$ has positive support, in which case, $j=1$ and $j=2$ are identical; 
``p.s$.^*$" indicates that $F$ is defined on $(c,\infty)$ for some $c>-\infty$; $F*G$ indicates the convolution of two distribution functions $F$ and $G$; $S^*$ represents the strong subexponential class; ``NA," ``LWQD," and ``END" refer to the dependence structures introduced in the corresponding papers. In the last three columns,  the ``or'' conditions are connected with the symbol $/$, and the ``and'' conditions are connected with a comma.  While some of these references allow $X_i$ to have different distribution functions $F_i$, this table assumes all $F_i$ are the same, in an effort to make this summary as concise as possible. This table provides an overall view of the recent developments in this area, without the intention of providing an exhaustive list. }\label{table:literature}
\end{table}
\end{center}

Among the results in Table \ref{table:literature}, in the theorem below, we summarize
Theorem 3.2 of \cite{geluk2009asymptotic} and Theorem 3.1 of \cite{geng2019tail}.
Theorem \ref{thm:inS} can be understood as an extension of Corollary 2 of \cite{fang2024heavy} to the subexponential class of calibrators $F$, under a stricter condition on the dependence among $X_i$, Assumption \ref{assmp:geluktang} or \ref{assmp:kotang}. 
\begin{theorem}\label{thm:inS} Let  $F\in\mathcal{S}$. For any finite integer $n$, 
\begin{enumerate}[{\normalfont (i)}]
    \item  the equation (\ref{eq:indep_tailprob}) holds for $j=1$, if Assumptions \ref{assump:w_equal} and \ref{assmp:geluktang} are satisfied.
    \item\label{thm:inS-2} the equation (\ref{eq:indep_tailprob}) holds for $j=1,2,3$, if Assumptions \ref{assump:w_random}, \ref{assmp:kotang}, and \ref{assump:leftail} are satisfied.
\end{enumerate}
\end{theorem}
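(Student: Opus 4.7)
The plan is to prove Theorem \ref{thm:inS} by invoking two existing results from the heavy-tailed sum literature, after a modest change of variables. Specifically, part (i) is obtained from Theorem 3.2 of \cite{geluk2009asymptotic} and part (ii) from Theorem 3.1 of \cite{geng2019tail}, both of which establish tail equivalences of the form (\ref{eq:indep_tailprob}) for sums (and, in the second reference, also for maxima and maxima of cumulative sums) of dependent subexponential random variables. The preparatory observation, common to both parts, is that under the global null the calibrated statistics $X_i = \overline{F}^{-1}(U_i)$ are continuous $F$-distributed random variables, so the heavy-tail literature applies verbatim with the dependence structure transferred from $\bm{U}_n$ to $\bm{X}_n$ through the monotone map $\overline{F}^{-1}$.

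For part (i), under Assumption \ref{assump:w_equal} the weights are constant $w_i = 1/n$, so I would first rewrite
\begin{equation*}
\bm{P}\left(S_{1,F,w}(\bm{U}_n) > t\right) = \bm{P}\left(\sum_{i=1}^n X_i > nt\right), \qquad \sum_{i=1}^n \bm{P}\left(w_i X_i > t\right) = n\bm{P}(X_1 > nt).
\end{equation*}
Setting $s = nt$ reduces the target statement (\ref{eq:indep_tailprob}) to $\bm{P}(\sum_{i=1}^n X_i > s) \sim n \overline{F}(s)$ as $s \to \infty$, which is exactly the conclusion of Theorem 3.2 of \cite{geluk2009asymptotic} under $F \in \mathcal{S}$ and the conditional tail bound Assumption \ref{assmp:geluktang}. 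This closes part (i).

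For part (ii), I would apply Theorem 3.1 of \cite{geng2019tail} directly. That theorem establishes (\ref{eq:indep_tailprob}) simultaneously for $j = 1, 2, 3$ under the Ko--Tang dependence condition (Assumption \ref{assmp:kotang}), the left-tail control condition phrased via the class $\mathcal{H}(F)$ (Assumption \ref{assump:leftail}), and a random-weight setup compatible with Assumption \ref{assump:w_random}. Since our sum, max-cumulative-sum, and max statistics coincide with the three functionals they treat, and since our calibrated $\{X_i\}$ share the common distribution $F$, the conclusion transfers without modification.

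The main obstacle is not conceptual but bookkeeping: verifying that our statement of the weight, dependence, and left-tail assumptions is genuinely equivalent to (or at least implies) the hypotheses as worded in \cite{geluk2009asymptotic} and \cite{geng2019tail}. Particular care is needed to confirm that the normalization $\sum_{i=1}^n \bm{E} w_i = 1$ with $\bm{P}(0 < w_i < 1) = 1$ matches the moment conditions they impose on random weights, that independence of $\{w_i\}$ from $\{X_i\}$ is preserved after the change of variables, and that the formulation of Assumption \ref{assump:leftail} via $\mathcal{H}(F)$ reproduces the left-tail control used in Geng et al.'s argument. Once these correspondences are confirmed, no new analytic argument is required.
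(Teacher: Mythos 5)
Your proposal matches the paper's treatment exactly: the paper presents this theorem as a direct summary of Theorem 3.2 of \cite{geluk2009asymptotic} (for part (i), after the same rescaling $s=nt$ implicit in the equal-weights case) and Theorem 3.1 of \cite{geng2019tail} (for part (ii)), and offers no separate proof in the appendix beyond these citations. Your bookkeeping caveats about matching the weight, dependence, and left-tail hypotheses are precisely the correspondences the paper relies on (and records in its Table \ref{table:literature}), so no further argument is needed.
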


While the tail equivalence results in Table \ref{table:literature} and Theorem \ref{thm:inS} seem useful,  applying these results to testing is not as straightforward unless all weights are equal or the calibrators are in the regularly varying family.
This is due to the possible difficulty in finding $t$ that satisfies $\alpha=\sum_{i=1}^n\overline{F}(w_i^{-1}t)$ for a given significance level $\alpha$, unless the right tail approximately follows a power law such as Pareto or other regularly varying distributions. We will discuss in Section \ref{sec:regvar_Mmergfun} about how to conduct a test when the calibrator is in the regularly varying class. We would also like to note that simulating these tail probabilities with independent $U_i$, based on (\ref{eq:indep_tailprob}), does not work, particularly when $\alpha$ or $n$ is large. This is due to the poor tail approximations in such cases, as can be seen from the Table \ref{table:size} in Section \ref{sec:simulation}.

We now assume equal weights, or Assumption \ref{assump:w_equal}, for the rest of this section to obtain an analytic form of the critical value to learn the behavior of the combined statistics with subexponential calibrators. When $w_{i}=1/n$, under the global null, $\bm{P}(w_iX_i>t)=\bm{P}(X_i>nt)=\overline{F}(nt)$ for all $i=1,\ldots,n$. The equation (\ref{eq:indep_tailprob}) can be rewritten as
$
     \bm{P}\left(S_{j,\mathcal{S},\ref{assump:w_equal}}(\bm{U}_n)>t\right)
     \sim n\overline{F}(nt)
$ as $t\to\infty$.
The critical values of the combined test with a nominal level $\alpha$ is, therefore, 
\begin{equation}\label{equation:talpha}
    t_{\alpha,n}=\frac{1}{n}{F}^{-1}\left(1-\frac{\alpha}{n}\right)=\frac{1}{n}\overline{F}^{-1}\left(\frac{\alpha}{n}\right).
\end{equation}

\begin{remark}\label{remark:maxcomstat}{\rm
The max-combination statistic with equal weights, $S_{3,\mathcal{S},\ref{assump:w_equal}}(\bm{U}_n)$, with the critical value $t_{\alpha,n}$ chosen as in equation (\ref{equation:talpha}), have the same tail probability no matter what $F$ is used. In fact, the max-combination statistic based on a subexponential calibrator with equal weights is equivalent to Bonferroni's method. This is trivial to see by noticing that, since $\overline{F}$ is a nonincreasing function,
$$\begin{array}{lll}\bm{P}\left(S_{3,\mathcal{S},\ref{assump:w_equal}}(\bm{U}_n)>t_{\alpha,n}\right)&=&\bm{P}\left(\frac{1}{n}\max_{i=1,\ldots,n}\overline{F}^{-1}(U_i)>\frac{1}{n}\overline{F}^{-1}\left(\frac{\alpha}{n}\right)\right)
\\&=&\bm{P}\left(\min_{i=1,\ldots,n}U_i<\frac{\alpha}{n}\right).
\end{array}$$
Theorem \ref{thm:inS} (\ref{thm:inS-2}) validates Bonferroni's method for any finite $n$, with the tail-independence condition, as long as the significance level is small enough: $\bm{P}\left(\min_{i=1,\ldots,n}U_i<\frac{\alpha}{n}\right)\sim\alpha$ as $\alpha\to0$.
}\end{remark}


As we will see in simulation results presented in Table \ref{table:size} in Section \ref{sec:simulation}, there are severe under- or over-rejections for certain choices of calibrators when tests are strongly dependent. Severe over-rejection implies that family-wise error rates are not properly controlled, and strong under-rejection naturally leads to a significant loss of power. These severe under- and over-rejection behaviors can be explained by a careful examination of a case with some perfectly dependent $p$-values included in the sample, similar to Theorem 2 of \cite{fang2024heavy}.
Theorem \ref{thm:inS} can easily be extended to a similar case.  Let $\mathcal{P}=\{i=1,\ldots,n;U_i=U_j~{\rm for~some}j\neq i\}$ be the indices of $p$-values that has a perfect correlation with another $p$-value under considerations. For simplicity, assume that $U_i=U_j$ for any $i,j\in\mathcal{P}$, similarly to the setting in Theorem 2 of \cite{fang2024heavy}. This means that if there are perfectly correlated cases, there will be only one set of $p$-values that are perfectly correlated with each other.
Suppose the number of elements in $\mathcal{P}$ be $n-m$,
where $m$ is an integer $m\geq0$ that represents the number of $p$-values that do not have a perfectly correlated copy within $\{1,\ldots,n\}$ . Without loss of generality, let $\mathcal{P}=\{n-m+1,\ldots,n\}$.  If $n-m\geq 2$, $\mathcal{P}$ is not an empty set. The following theorem extends part (ii) of Proposition \ref{thm:inS}.  Similar results can also be stated in different settings based on Table \ref{table:literature}, which will be omitted for brevity.

\begin{theorem}\label{thm:inS_rho1}
     Let the calibrated $p$-values $X_1,\ldots,X_{m+1}$ satisfy Assumption \ref{assmp:kotang}, the weights $w_1,\ldots,w_{n}$ satisfy Assumption \ref{assump:w_random}, and the calibrator $F$ satisfy Assumption \ref{assump:leftail}.  Then
    \begin{equation}\label{eq:thm:inS:1}\bm{P}(S_{1,F,w}(\bm{U}_n)>t)\sim \sum_{i=1}^{m}\bm{P}\left(w_iX_i>t\right)+\bm{P}\left(\left(\textstyle\sum_{i=m+1}^nw_i\right)X_{m+1}>t\right)\end{equation}
    and
    \begin{equation}\label{eq:thm:inS:3}\bm{P}(S_{3,F,w}(\bm{U}_n)>t)\sim \sum_{i=1}^{m}\bm{P}\left(w_iX_i>t\right)+\bm{P}\left(\left(\vee_{i=m+1}^nw_i\right)X_{m+1}>t\right).\end{equation}  
\end{theorem}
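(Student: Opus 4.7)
The plan is to reduce the problem to Theorem \ref{thm:inS}(ii) applied to only $m+1$ distinct calibrated $p$-values, by merging the perfectly correlated coordinates into a single one with an effective (random) weight. Concretely, since $X_{m+1}=\cdots=X_n$ by the definition of $\mathcal{P}$, I would rewrite
$$S_{1,F,w}(\bm{U}_n)=\sum_{i=1}^{m}w_iX_i+W^{*}X_{m+1},\qquad W^{*}:=\sum_{i=m+1}^{n}w_i,$$
and, for the max, observe that for every $t>0$ the event $\{S_{3,F,w}(\bm{U}_n)>t\}$ coincides with
$$\Bigl\{\max\bigl(w_1X_1,\ldots,w_mX_m,\,W^{**}X_{m+1}\bigr)>t\Bigr\},\qquad W^{**}:=\bigvee_{i=m+1}^{n}w_i,$$
because when $t>0$ a product $w_iX_i$ can exceed $t$ only if $X_i>0$, and on $\{X_{m+1}>0\}$ one has $\bigvee_{i=m+1}^{n}w_iX_{m+1}=W^{**}X_{m+1}$, while on $\{X_{m+1}\le 0\}$ both $\bigvee_{i=m+1}^{n}w_iX_{m+1}$ and $W^{**}X_{m+1}$ are non-positive and hence below $t$.

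Once both statistics are expressed in terms of the $m+1$ variables $X_1,\ldots,X_m,X_{m+1}$ and the new weight collections $(w_1,\ldots,w_m,W^{*})$ or $(w_1,\ldots,w_m,W^{**})$, the next step is to verify the hypotheses of Theorem \ref{thm:inS}(ii). The dependence condition \ref{assmp:kotang} on the reduced vector $(X_1,\ldots,X_{m+1})$ and Assumption \ref{assump:leftail} on $F$ hold by hypothesis. The new weights remain independent of the $X_i$'s because they are measurable functions of $(w_1,\ldots,w_n)$, which is assumed independent of $\{X_i\}$. Each of $W^{*}$ and $W^{**}$ lies in $(0,1)$ almost surely since individual weights do and the full weight vector sums to $1$. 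For the sum, the normalization $\sum_{i=1}^{m}\bm{E}w_i+\bm{E}W^{*}=\sum_{i=1}^{n}\bm{E}w_i=1$ is exactly Assumption \ref{assump:w_random}, so Theorem \ref{thm:inS}(ii) yields (\ref{eq:thm:inS:1}) directly. For the max, one applies the $j=3$ conclusion of Theorem \ref{thm:inS}(ii) to the same reduced system; the resulting tail equivalence combined with the event identity above gives (\ref{eq:thm:inS:3}).

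The main subtlety I anticipate is the weight-normalization issue in the max case: the random weights $(w_1,\ldots,w_m,W^{**})$ need not satisfy $\sum\bm{E}=1$, since $\bm{E}W^{**}\le\sum_{i>m}\bm{E}w_i$. I would address this by noting that the tail-equivalence step of Theorem \ref{thm:inS}(ii), i.e.\ Theorem 3.1 of \cite{geng2019tail}, uses the normalization only to interpret the weights as a convex combination; the underlying asymptotic result requires only independence of weights from the $X_i$'s and uniform boundedness in $(0,1)$, both of which persist after the grouping. If one prefers a strictly ``black-box'' invocation of Theorem \ref{thm:inS}(ii), a renormalization trick works: replace the weights by $\widetilde w_i=w_i/c$ and $\widetilde W^{**}=W^{**}/c$ with a deterministic $c$ chosen so that expectations sum to one, apply the theorem, and then rescale $t\to ct$ using the continuity of the tail at infinity. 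The sign-handling observation for $X_{m+1}$ in the max statistic is the other place where care is needed, but it is harmless because the event reformulation is exact for every $t>0$, not merely asymptotically.
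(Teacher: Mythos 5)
Your proposal is correct and follows essentially the same route as the paper's proof: group the perfectly correlated coordinates into a single variable with effective weight $W^{*}=\sum_{i=m+1}^{n}w_i$ (sum) or $W^{**}=\bigvee_{i=m+1}^{n}w_i$ (max), then invoke Theorem \ref{thm:inS}(\ref{thm:inS-2}), rescaling the weights in the max case so their expectations sum to one. Your explicit event-level treatment of the sign of $X_{m+1}$ in the max statistic is in fact slightly more careful than the paper's, which simply asserts the identity $\bigvee_{i=1}^{n}w_iX_i=\bigvee_{i=1}^{m+1}w_i^{**}X_i$ without comment.
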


The presence of perfectly correlated $p$-values makes the difference between the sum- and max- combination tests in the way that their weights are combined, which ultimately leads to the difference in their robustness in a strongly correlated case. To demonstrate this behavior, we consider equal weights and regularly varying calibrators. Equations (\ref{eq:thm:inS:1}) and (\ref{eq:thm:inS:3}) in Theorem \ref{thm:inS_rho1} can be rewritten as, as $t\to\infty$,
\begin{equation}\label{eq:thm:inS:11}\bm{P}(S_{1,F,\ref{assump:w_equal}}(\bm{U}_n)>t)\sim m\overline{F}(nt)+\overline{F}\left(\frac{n}{n-m}t\right)\sim \frac{\{m+(n-m)^\gamma\}\overline{F}(t)}{n^{\gamma}}\end{equation}
and 
\begin{equation}\label{eq:thm:inS:33}\bm{P}(S_{3,F,\ref{assump:w_equal}}(\bm{U}_n)>t)\sim m\overline{F}(nt)+\overline{F}(nt)\sim\frac{(m+1)\overline{F}(t)}{n^{\gamma}},\end{equation}
respectively.
On the contrary, the tail probability without any perfect correlation from equation (\ref{eq:indep_tailprob}) is 
\begin{equation}\label{eq:thm:inS:0}\bm{P}(S_{j,F,\ref{assump:w_equal}}(\bm{U}_n)>t)\sim n\overline{F}(nt)=\frac{n\overline{F}(t)}{n^{\gamma}},\end{equation}
which we will use as a reference distribution without any prior knowledge of the existence of perfect or very strong correlation.
The three tail probabilities have the following relationship:  for any $n-m\geq 2$, and $m\geq 0$, when $\gamma\in(0,1)$, 
$$(\ref{eq:thm:inS:33})<(\ref{eq:thm:inS:11})<(\ref{eq:thm:inS:0}),$$
when $\gamma=1$,
$$(\ref{eq:thm:inS:33})<(\ref{eq:thm:inS:11})=(\ref{eq:thm:inS:0}),$$
and when $\gamma>1$,$$(\ref{eq:thm:inS:33})<(\ref{eq:thm:inS:0})<(\ref{eq:thm:inS:11}).$$
This relationship instructs that with a regularly varying calibrator and equal weights, under the presence of perfect or very strong positive dependence, (i) the max-combination statistic always under-rejects, (ii) the sum-combination statistic with $\gamma>1$ cannot control the size, (iii)  the sum-combination statistic with $\gamma=1$ has asymptotically exact size for a small enough significance level, and (iv) the sum-combination statistic with $\gamma\in(0,1)$ always under-rejects. In particular, the under-rejection of a sum-combination statistic gets severer as $\gamma\downarrow0$ and becomes ignorable as $\gamma\uparrow1$. This behavior is natural consequence of the fact that the sum-combination with $\gamma\downarrow0$ is equivalent to the max-combination statistic.


 Now we try to understand the behaviors of the sum- and max-combination statistics under the global null with a wider range of calibrators, beyond the regularly varying class. For simplicity, we assume $m=0$, in which case, $U_1=\ldots=U_n$ for all $n$ and Theorem \ref{thm:inS_rho1} holds trivially without any approximations. 
 The combination statistics can be rewritten in simpler forms, $S_{1,\mathcal{S},\ref{assump:w_equal}}(\bm{U}_n)=\overline{F}^{-1}(U_1)$ and $S_{3,\mathcal{S},\ref{assump:w_equal}}(\bm{U}_n)=\frac{1}{n}\overline{F}^{-1}(U_1)$. The tail probability for the max-combine test with $t_{\alpha,n}=\frac{1}{n}\overline{F}^{-1}(\alpha/n)$ as the threshold is
$$\bm{P}(S_{3,\mathcal{S},\ref{assump:w_equal}}(\bm{U}_n)>t_{\alpha,n})=\bm{P}\left(\frac{1}{n}\overline{F}^{-1}(U_1)>\frac{1}{n}\overline{F}^{-1}\left(\frac{\alpha}{n}\right)\right)=\bm{P}\left(U_1<\frac{\alpha}{n}\right)=\frac{\alpha}{n},$$
which is much smaller than the nominal level $\alpha$ when $n$ is large. This explains the under-rejection behavior of Bonferroni's method for dependent tests.
For the sum-combination statistic, the tail probability
$$\bm{P}(S_{1,\mathcal{S},\ref{assump:w_equal}}(\bm{U}_n)>t_{\alpha,n})=\bm{P}\left(\overline{F}^{-1}(U_1)>\frac{1}{n}\overline{F}^{-1}\left(\frac{\alpha}{n}\right)\right)=\overline{F}\left(\frac{1}{n}\overline{F}^{-1}\left(\frac{\alpha}{n}\right)\right)$$
depends on the choice of $F$. The following examples discuss the theoretical sizes of $S_{1,\mathcal{S},\ref{assump:w_equal}}(\bm{U}_n)$ with popular subexponential $F$.

\begin{example}\label{example:regularlyvarying2}{\rm If $F$ is regularly varying with index $\gamma>0$,
$\overline{F}\left(\frac{1}{n}\overline{F}^{-1}\left(\frac{\alpha}{n}\right)\right)\sim (\frac{1}{n})^{-\gamma}\overline{F}\left(\overline{F}^{-1}\left(\frac{\alpha}{n}\right)\right)=n^{\gamma}\frac{\alpha}{n}=\alpha n^{\gamma-1}$ from equation (\ref{eq:regvardef}). Since $\alpha n^{\gamma-1}\downarrow0$ when $0<\gamma<1$ and  $\alpha n^{\gamma-1}\uparrow\infty$ when $\gamma>1$, the sum-combination statistic cannot have asymptotically correct size unless $\gamma=1$ or $n=1$. If one is interested in keeping $\bm{P}(S_{1,\mathcal{S},\ref{assump:w_equal}(\bm{U}_n)}>t_{\alpha,n})\lesssim \alpha$, one needs $\gamma\in(0,1]$. This observation substantiates the validity of the sum-combination statistics with regularly varying inverse calibrator only when its index is 1 or less than 1, under the presence of a strong positive correlation.
}\end{example}

\begin{example}{\rm If $F$ is heavy-tailed Weibull with the survival function $\overline{F}(t)=e^{-t^k}$ for $0<k<1$ and $\left(\overline{F}^{-1}(y)\right)^{k}=-\log(y)$. Hence $\bm{P}(S_{1,\mathcal{S},\ref{assump:w_equal}}(\bm{U}_n)>t_{\alpha,n})=\overline{F}\left(\frac{1}{n}\overline{F}^{-1}(\alpha/n)\right)=\exp\left(-n^{-k}\{\overline{F}^{-1}(\alpha/n)\}^k\right)=\exp\left(n^{-k}\{\log(\alpha)-\log(n)\}\right)$. With some calculations, it can be shown that 
$\bm{P}(S_{1,\mathcal{S},\ref{assump:w_equal}}(\bm{U}_n)>t_{\alpha,n})=\alpha$ only when 
\begin{equation*}\label{eq:weibull:perfectcor}\alpha=\alpha(n,k)=\exp\left(\frac{\log (n)}{1-n^k}\right).\end{equation*}
If one is interested in the size control,
$\bm{P}(S_{1,\mathcal{S},\ref{assump:w_equal}}(\bm{U}_n)>t_{\alpha,n})\leq\alpha$ for
\begin{equation*}\label{eq:weibull:perfectcor}\alpha\geq\exp\left(\frac{\log (n)}{1-n^k}\right).\end{equation*}
This condition is equivalently written in terms of $k$ as
\begin{equation}\label{eq:weibull:perfectcor2}k= k(n,\alpha)\leq\frac{\log\left(1-\log(n)/\log(\alpha)\right)}{\log(n)}.\end{equation}
Since $\alpha(n,k)$ is an increasing function both in $n$ and $k$ and always takes values between 0 and 1, there will always be a unique value of $k$, if exists, that satisfies (\ref{eq:weibull:perfectcor2}) for given $\alpha$ and $n$.
A Weibull distribution has a lighter tail as $k\uparrow 1$ and heavier tail as $k\downarrow0$.
Since $k(n,\alpha)$ is decreasing in $n$ and increasing in $\alpha$, the criteria in (\ref{eq:weibull:perfectcor2}) states that the more perfectly, or strong-positively correlated, tests we have, or the smaller significance level $\alpha$ we require, the heavier tail of a Weibull calibrator we need to employ, but not too heavy.
}\end{example}

\begin{example}\label{example:logregvarying2}{\rm
When $F$ is log-Pareto with index $\gamma$, $\overline{F}(t)=\bm{P}(\log(X)>\log(t))=\{\log(t)\}^{-\gamma}$ and $\overline{F}^{-1}(t)=\exp(\alpha^{-\frac{1}{\gamma}}n^{\frac{1}{\gamma}})$, and hence $\overline{F}\left(\frac{1}{n}\overline{F}^{-1}\left(\frac{\alpha}{n}\right)\right)=\left[\log\left\{n^{-1}\exp\left(\alpha^{-\frac{1}{\gamma}}n^{\frac{1}{\gamma}}\right)\right\}\right]^{-\gamma}=\left(\alpha^{-\frac{1}{\gamma}}n^{\frac{1}{\gamma}}-\log n  \right)^{-\gamma}$.
Therefore,
 $\bm{P}(S_{1,\mathcal{S},\ref{assump:w_equal}}(\bm{U}_n)>t_{\alpha,n})\leq\alpha$ when
$\alpha\leq \left(\frac{n^{\frac{1}{\gamma}}-1}{\log n}\right)^{\gamma},$ and
the precise size control
$\bm{P}(S_{1,\mathcal{S},\ref{assump:w_equal}}(\bm{U}_n)>t_{\alpha,n})=\alpha$ can be achieved only if
\begin{equation}\label{eq:example:gamma:logpareto}\alpha=\alpha(n,\gamma)=\left(\frac{n^{\frac{1}{\gamma}}-1}{\log n}\right)^{\gamma}.\end{equation}
By taking derivatives of $\alpha(n,\gamma)$ with respect to $n$ and $\gamma$, respectively, we can see that $\alpha(n,\gamma)$ is increasing in $n$ and decreasing in $\gamma$. While there is no closed-form solution to find such $\gamma$, it is easy to see that the smaller $\alpha$ is or the larger $n$ is, the larger $\gamma$ is required. Since larger $\gamma$ indicates a lighter tail, we make the opposite conclusion to the Weibull case above - a lighter tail, but not too light, is required for a smaller significance level or for a greater number of tests,  when a log-regularly varying calibrator to combine tests that have a perfect or strong positive correlation.
}\end{example}

Based on the above examples, it seems that, within the same calibrator class and under a perfect or strong positive correlation, there is a sweet spot in choosing the indices related to how heavy the calibrator's right tail is.  It is interesting to note that the ideal tail index for subexponential calibrators, well beyond the regularly varying class, depends both on $n$ and $\alpha$. Table \ref{table:optimal_weibull_logP} presents such optimal parameters for Weibull and log-Pareto calibrators: the Weibull $k=\frac{\log\left(1-\log(n)/\log(\alpha)\right)}{\log(n)}$, and the log-Pareto $\gamma$ is chosen numerically on a fine grid $\gamma=0.001,0.002,0.003,\ldots,10$ so that the distance between $\alpha$ and $\left((n^{\frac{1}{\gamma}}-1)/ \log n \right)^{\gamma}$ is minimized for each combination of $\alpha$ and $n$.

\begin{table}[htbp]
    \centering
    \begin{tabular}{|c||ccccc|ccccc|}
        \hline
        \multirow{2}{*}{\diagbox{$n$}{$\alpha$}}& \multicolumn{5}{c|}{$k$ for Weibull} & \multicolumn{5}{c|}{$\gamma$ for log-Pareto} \\
        \cline{2-11}
        & 0.1 & 0.05 & 0.01 & 0.001 & 0.0001 & 0.1 & 0.05 & 0.01 & 0.001 & 0.0001\\
        \hline\hline
        25   & 0.2717 & 0.2267 & 0.1647 & 0.1188 & 0.0931 & 3.346 & 3.649 & 4.317 & 5.210 & 6.050 \\
        50   & 0.2538 & 0.2136 & 0.1572 & 0.1147 & 0.0905 & 3.524 & 3.819 & 4.474 & 5.354 & 6.185 \\
        100  & 0.2386 & 0.2022 & 0.1505 & 0.1109 & 0.0880 & 3.700 & 3.988 & 4.631 & 5.498 & 6.320 \\
        500  & 0.2105 & 0.1807 & 0.1374 & 0.1033 & 0.0830 & 4.104 & 4.378 & 4.994 & 5.834 & 6.636 \\
        1000 & 0.2007 & 0.1731 & 0.1326 & 0.1003 & 0.0810 & 4.275 & 4.544 & 5.150 & 5.978 & 6.773 \\
        \hline
    \end{tabular}\caption{The optimal shape parameters $k$ of Weibull calibrators and the optimal indexes $\gamma$ of log-Pareto calibrators that satisfy (\ref{eq:weibull:perfectcor2})  and (\ref{eq:example:gamma:logpareto}), respectively, for various choices of the significance level $\alpha$ and the number $n$ of tests. The Weibull $k$ is chosen as the upper limit of (\ref{eq:weibull:perfectcor2}), and the log-Pareto $\gamma$ is found via grid search, incrementally from 0.001 to 10 with a step size of 0.001.}\label{table:optimal_weibull_logP}
\end{table}

When $F$ is relatively light-tailed such as Weibull, it seems that the heavier tails help achieve robust results as $n$ increases or $\alpha$ decreases, whereas a relatively heavy-tailed calibrator such as log-regularly varying requires a lighter tail. Such tail index for the regularly varying class, in contrast, does not depend on $n$ nor $\alpha$. When $F\in\mathcal{RV}_{-\gamma}$, $\gamma=1$ is preferred for its robustness to the strong positive correlation among the tests. This easier choice of the tail index makes the regularly varying class more favorable when constructing a $p$-value combination strategy.
The next section explores the properties of regularly varying $F$.


\section{Regularly varying distributions and M-merging functions}\label{sec:regvar_Mmergfun}

This section explores the combination statistics with regularly varying $F$ when the number of tests $n$ is finite. We limit our interest to the regularly varying $F$ for three reasons. First, as can be seen from Table \ref{table:literature}, if a weaker dependence condition such as the UTAI or TAI is necessary, the calibrator should be limited to dominatedly varying or consistently varying. Regularly varying distributions $\mathcal{RV}_{-\gamma}$ with $\gamma>0$ is an important class in $\mathcal{C}$ and $\mathcal{D}\cap\mathcal{L}$. Second, by limiting the calibrator class to regularly varying, we can rewrite our sum- and max- combination statistics so that they fit into the {\it M-family merging function} framework of \cite{vovk2020combining}. Last but not least, the ability to control the tail probability provides a convenient way to build a critical value for the combined tests, as mentioned briefly after Theorem \ref{thm:inS}. Its associated parameters (in the regularly varying case, the index $\gamma$) that are robust to strong positive correlation are easy to find and do not necessarily have to depend on other parameters such as $n$, as Example \ref{example:regularlyvarying2} demonstrated.

We write \cite{chen2009sums}'s result below. Note that \cite{geluk2009asymptotic}, \cite{li2013pairwise}, or \cite{cang2017extremal}'s results can also be used for wider classes of calibrators but with more restrictive dependence assumptions.
\begin{theorem}[Theorem 3.2 of \cite{chen2009sums}]\label{thm:chen}
If Assumptions \ref{assump:w_random} and \ref{assmp:UTAI} are satisfied, the equation (\ref{eq:indep_tailprob}) holds for $j=1$, for any finite $n$, as long as $F\in \mathcal{C}$ with positive support.
\end{theorem}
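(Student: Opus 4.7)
The plan is to establish the asymptotic equivalence $\bm{P}(S_{1,F,w}(\bm{U}_n) > t) \sim \sum_{i=1}^n \bm{P}(w_i X_i > t)$ via matching lower and upper bounds, exploiting the positive support of $F$ so that a large sum forces at least one summand to be large---the catastrophe principle in action. For the lower bound, $X_i \geq 0$ gives $S_{1,F,w}(\bm{U}_n) \geq \bigvee_{i=1}^n w_i X_i$, so Bonferroni yields
\begin{equation*}
\bm{P}\bigl(S_{1,F,w}(\bm{U}_n) > t\bigr) \;\geq\; \sum_{i=1}^n \bm{P}(w_i X_i > t) \;-\; \sum_{1 \leq i < j \leq n} \bm{P}(w_i X_i > t,\, w_j X_j > t).
\end{equation*}
Conditioning on $\bm{w}$ and using the inclusion $\{w_iX_i>t,\,w_jX_j>t\}\subseteq\{X_i>t/\max(w_i,w_j),\,X_j>t/\max(w_i,w_j)\}$, Assumption \ref{assmp:UTAI} makes each pairwise conditional probability $o(\overline{F}(t/\max(w_i,w_j)))$ pointwise in $\bm{w}$. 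The independence of $\bm{w}$ and $\bm{X}$ (Assumption \ref{assump:w_random}) combined with dominated convergence then renders each cross term $o\bigl(\bm{P}(w_iX_i>t)+\bm{P}(w_jX_j>t)\bigr)$.

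For the upper bound, I would decompose $\{S_{1,F,w}(\bm{U}_n)>t\}$ via a ``one large or two moderately large'' splitting: for any $\delta\in(0,1/n)$,
\begin{equation*}
\{S_{1,F,w}(\bm{U}_n)>t\}\;\subseteq\;\bigcup_{i=1}^n\bigl\{w_iX_i>(1-(n-1)\delta)t\bigr\}\;\cup\;\bigcup_{1\leq i<j\leq n}\bigl\{w_iX_i>\delta t,\;w_jX_j>\delta t\bigr\}.
\end{equation*}
This is valid: if no $w_iX_i$ exceeds $(1-(n-1)\delta)t$ and at most one exceeds $\delta t$, the sum is bounded by $(1-(n-1)\delta)t+(n-1)\delta t=t$. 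The pairwise events are controlled exactly as in the lower bound. The single-summand terms are handled via the defining property $\lim_{a\uparrow1}\limsup_{t\to\infty}\overline{F}(at)/\overline{F}(t)=1$ of $\mathcal{C}$, which yields $\overline{F}((1-(n-1)\delta)t/w_i)\leq(1+\varepsilon(\delta))\overline{F}(t/w_i)$ for $t$ sufficiently large, with $\varepsilon(\delta)\downarrow 0$ as $\delta\downarrow0$. Integrating over $\bm{w}$ and sending $\delta\downarrow 0$ matches the lower bound.

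The principal obstacle is making both the UTAI and consistent-variation estimates uniform in the random weights: both are asymptotic statements about the unconditional law of $X$, but the random weights convert the deterministic threshold $t$ into the stochastic threshold $t/w_i$, so one must justify exchanging $t\to\infty$ with integration over $\bm{w}$. I would address this by truncating at a small weight $\eta>0$: on $\{w_i>\eta\}$ the monotonicity of $\overline{F}$ together with the compactness of $[\eta,1]$ delivers uniform asymptotics, while the contribution from $\{w_i\leq\eta\}$ is dominated by $\bm{E}[\overline{F}(t/w_i)\mathbb{I}_{w_i\leq\eta}]$, which is asymptotically negligible after sending $\eta\downarrow0$. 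Chen and Yuen's original proof in \cite{chen2009sums} carries out precisely this truncation-plus-dominated-convergence strategy in the consistently varying setting, and I would model my argument on theirs.
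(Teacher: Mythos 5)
The paper does not prove this statement at all: it is imported verbatim as Theorem 3.2 of \cite{chen2009sums}, so there is no internal proof to compare against. Your reconstruction is, as you yourself note, essentially Chen and Yuen's own argument, and it is correct in outline: the Bonferroni lower bound with UTAI killing the cross terms, the ``one large or two moderately large'' inclusion for the upper bound (your verification of that inclusion is right), consistent variation to absorb the $(1-(n-1)\delta)$ factor, and $\delta\downarrow 0$ after $t\to\infty$. Two small points deserve attention. First, in the upper bound the pairwise events sit at level $\delta t$, not $t$, so they are \emph{not} controlled ``exactly as in the lower bound'': you additionally need $\overline{F}(\delta t/w)=O(\overline{F}(t/w))$ uniformly for large $t$, which is exactly what $\mathcal{C}\subset\mathcal{D}$ supplies; this should be said explicitly. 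Second, your worry about exchanging $t\to\infty$ with integration over $\bm{w}$ is less severe than you suggest under Assumption \ref{assump:w_random}: since $\bm{P}(0<w_i<1)=1$, every stochastic threshold $t/w_i$ exceeds the deterministic level $t$, and the UTAI, dominated-variation, and consistent-variation estimates are all one-sided $\limsup$ statements valid uniformly over all arguments above a deterministic cutoff, so the bounds hold pointwise in $\bm{w}$ with constants independent of $\bm{w}$ once $t$ is large. The truncation at small $\eta$ is the device Chen and Yuen need because their weights may be unbounded; here it is harmless but essentially redundant. With those two clarifications your sketch is a complete and faithful proof of the cited result.
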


Now we demonstrate that $S_{1,F,\ref{assump:w_equal}}(\bm{U}_n)$ and $S_{3,\mathcal{S},\ref{assump:w_equal}}(\bm{U}_n)$ with Pareto $F$  are equivalent to the M-merging functions in \cite{vovk2020combining,vovk2021values,vovk2022admissible}. It turns out that some definitions in \cite{vovk2020combining,vovk2021values,vovk2022admissible}'s framework need to be relaxed in order to accommodate other regularly varying $F$.
We also note that, unlike the sum combination statistic, the max-cumulative-sum and the max combination statistics with regularly varying $F$ are not yet covered in the $p$-value combination literature, to the best of our knowledge. We propose to utilize the tail equivalence literature to extend the type of $p$-value combination statistics as well as to consider random weights. Our argument below relies on the item \ref{thm:inS-2} of Theorem \ref{thm:inS} for $j=2$ and $j=3$ cases. While this setup limits the scope of the dependence structure, compared to the ones \cite{vovk2020combining}, we cover regularly varying calibrators and random weights.

The asymptotic scale invariance in equation  (\ref{eq:regvardef}) can be rewritten as, for any $a>0$,
\begin{equation}\label{eq:regvarying}\overline{F}(at)=\bm{P}(X>at)=\bm{P}(a^{-1}X>t)\sim a^{-\gamma}{\bm{P}(X>t)}=a^{-\gamma}\overline{F}(t),\end{equation}
as $t\to\infty$.
We also note that a similar relationship works when $a$ is a nonnegative random variable and is independent of $X$;
\begin{equation}\label{eq:regvarying_randomw}\overline{F}(at)\sim \left(\bm{E}a^{-\gamma}\right)\overline{F}(t),\end{equation}
as $t\to\infty$.
See, for instance, the proof of Theorem 2.2 of \cite{wang2006tail}, which was originally proven by \cite{breiman1965some}. Note that the random $a$ in (\ref{eq:regvarying_randomw}) obviously covers the nonrandom case in (\ref{eq:regvarying}).
Define
\begin{equation}\label{a_n}a_{n,\gamma}=\left(\sum_{i=1}^n \bm{E}w_i^{\gamma}\right)^{-\frac{1}{\gamma}}.\end{equation}
The scale invariance of regularly varying tails in (\ref{eq:regvarying_randomw}) implies
\begin{equation}\label{eq:regvarying1}\sum_{i=1}^n\bm{P}(w_iX_i>t)\sim \left(\sum_{i=1}^n \bm{E}w_i^\gamma\right)\overline{F}(t)=\left(\sum_{i=1}^n \bm{E}w_i^{\gamma}\right)\bm{P}(X>t)\end{equation}
and
\begin{equation}\label{eq:regvarying2} \left(\sum_{i=1}^n \bm{E}w_i^{\gamma}\right)\bm{P}(X>t)\sim\bm{P}(X>a_{n,\gamma}t).\end{equation}
The above approximation holds as long as all $X_i$ have the same distribution $F$ that has regularly varying tails and $t$ is large enough.
If the assumptions of Theorem \ref{thm:inS} or \ref{thm:chen} are satisfied, we have
  $\bm{P}(S_{j,F,w}(\bm{U}_n)>t)\sim
    \bm{P}\left(X>a_{n,\gamma}t\right)$
as $t\to\infty$.
Assuming that $a_{n,\gamma}t$ is large enough, by letting $t'=a_{n,\gamma}t$ and by writing $t'$ as $t$,
\begin{equation}\label{eq:comb.asymp}
    \bm{P}(a_{n,\gamma}S_{j,F,w}(\bm{U}_n)>t)\sim
    \bm{P}\left(X>t\right)
\end{equation}
as $t\to\infty$.
In fact, similar observations were made for $S_{1,F,w}(\bm{U}_n)$ by \cite{wilson2019harmonic} in equation (5) and by \cite{chen2023trade} in  Remark 2, but their discussions were limited to the Pareto distribution with shape parameter 1 or a Cauchy distribution.
We emphasize that this relation (\ref{eq:comb.asymp}) for $j=1$ holds as long as  $X_i$ are UTAI with $F\in\mathcal{RV}_{-\gamma}$ for $\gamma>0$, and for $j=1,2,3$ holds when $X_i$ satisfy Assumption  \ref{assmp:geluktang} or  \ref{assmp:kotang} with $F\in\mathcal{F}$ along with other minor assumptions.  

We can also rewrite (\ref{eq:comb.asymp}) in the form of p-merging functions in \cite{vovk2020combining}'s framework. By letting $t=F^{-1}(1-\epsilon)=\overline{F}^{-1}(\epsilon)$ for any  small enough $\epsilon>0$ such that $t$ and $a_{n,\gamma}t$ are large enough, as $\epsilon\to0$,
\begin{equation}\label{eq:tailprecisecombfun}
    \bm{P}\left[\overline{F}(a_{n,\gamma}S_{j,F,w}(\bm{U}_n))<\epsilon\right]\sim    \epsilon.
\end{equation}

This observation motivates {\it tail-precise combining functions} below. Our definition is motivated by the p-merging function framework of \cite{vovk2020combining,vovk2021values,vovk2022admissible}. A random variable $U$ is a {\it p-variable} if $P(U\leq\epsilon)\leq\epsilon$ for all $\epsilon\in(0,1)$, and an increasing Borel function $T:~[0,\infty)^n\to[0,\infty)$ is a {\it p-merging function} if  $T(U_1,\ldots,U_n)$ is a p-variable whenever $U_1,\ldots,U_n$ are all p-variables; that is, $\sup_{\bm{U}_n\in \mathcal{U}^n}{P}\left(T(\bm{U}_n\right) \leq\epsilon)\leq\epsilon$, letting $\mathcal{U}^n$ be the collection of p-merging functions based on $n$ p-variables. A p-merging function is {\it precise} if, for all $\epsilon\in(0,1)$, $\sup_{\bm{U}_n\in \mathcal{U}^n}{P}\left(T(\bm{U}_n\right) \leq \epsilon)=\epsilon.$

The notion of p-merging function is quite strong. For instance, \cite{ling2022stable} showed that for all $\epsilon\in(0,1)$,
$\sup_{\bm{U}_n\in\mathcal{U}_{mixing}^n}\lim_{n\to\infty}\bm{P}(T(\bm{U}_n)\leq\epsilon)=\epsilon$,
where $T(\bm{U}_n)=\overline{F}(a_{n,\gamma}S_{1,F,w}(\bm{U}_n))$ and ${F}$ is the distribution function of a stable random variable with stability parameter $\gamma$. Here, $\mathcal{U}_{ mixing}^n\subset\mathcal{U}^n$ is a set of vectors $\bm{U}_n$ of uniform random variables  that are weakly dependent, satisfying  mixing conditions and a short-range tail independence condition presented in \cite{ling2022stable}. In general, the random behavior of $T(\bm{U}_n)$ may depend on the strength and form of the dependence among $\bm{U}_n$.
The following definitions are to help distinguish qualities that affect the ability to control sizes.
\begin{definition}{\rm
Given a subset $\mathcal{U}_{\mathcal{A}}^n$ of $\mathcal{U}^n$,  we say a measurable function $T:[0,1]^n\to[0,\infty)$ is a {\it precise combining function for $\mathcal{U}_{\mathcal{A}}^n$} if, for all $\epsilon\in(0,1)$,
$$\sup_{\bm{U}_n\in\mathcal{U}_{\mathcal{A}}^n}\bm{P}\left(T(\bm{U}_n\right) \leq\epsilon)=\epsilon$$
and is an {\it asymptotically precise combining function for $\mathcal{U}_{\mathcal{A}}^\infty:=\{\mathcal{U}_{\mathcal{A}}^n\}_{n=1}^\infty$} if, for all $\epsilon\in(0,1)$,
$$\lim_{n\to\infty}\sup_{\bm{U}_n\in\mathcal{U}_{\mathcal{A}}^n}\bm{P}\left(T(\bm{U}_n\right) \leq\epsilon)=\epsilon.$$
We further define that $T(\bm{U}_n)$ is a  {\it tail-precise combining function for $\mathcal{U}_{\mathcal{A}}^n$} if, as $\epsilon\downarrow0$, $$\sup_{\bm{U}_n\in\mathcal{U}_{\mathcal{A}}^n}\bm{P}\left(T(\bm{U}_n\right) \leq\epsilon)\sim\epsilon.$$
Similarly, with $\epsilon=\epsilon_n\downarrow 0$ as $n\to\infty$, we say $T(\bm{U}_\infty)$ is an  {\it asymptotically tail-precise combining function for $\mathcal{U}_{\mathcal{A}}^\infty$} if, as $n\to\infty$, $$\sup_{\bm{U}_n\in\mathcal{U}_{\mathcal{A}}^n}\bm{P}\left(T(\bm{U}_n\right) \leq\epsilon)\sim\epsilon.$$

}\end{definition}

We use the term  ``combining function," than  ``p-merging function" to emphasize that our definition cannot be naively compared to p-merging functions. For instance, the stable combination statistics \citep{ling2022stable} are asymptotically precise combining functions for $\mathcal{U}_{mixing}^n$ but are not p-merging functions. We now formally state our results.
Recall that, under the global null, all $p_i=U_i$ and $\bm{X}_n=\bm{X}_n(\bm{U}_n)=\left(\overline{F}^{-1}(U_1),\ldots,\overline{F}^{-1}(U_n)\right)'$, and the combination statistics $S_{j,F,w}(\bm{U}_n)$ are functions of $F$, $\bm{U}_n,$ and $\{w_i\}$.  
Define
\begin{equation}\label{Tn}
    T_{j,
    F,w}(\bm{U}_n)=\overline{F}\left(a_{n,\gamma}S_{j,F,w}(\bm{U}_n)\right), ~~~~j=1,2,3.
\end{equation}
The following theorem organizes the above argument.
\begin{theorem}
Let $F\in \mathcal{RV}_{-\gamma}$ with $\gamma>0$, the weight $(w_1,\ldots,w_n)$ satisfies Assumption \ref{assump:w_random}, and the number $n$ of tests is finite.
The sum-combination statistic $T_{1,F,w}(\bm{U}_n)$ is a tail-precise combining function for $\mathcal{U}_{\ref{assmp:UTAI}}^n$, where $$\mathcal{U}_{Bj}^n=\{\bm{U}_n\in \mathcal{U}^n:~\bm{U}_n~{\rm satisfies~Assumption~}Bj,~U_i~{\rm uniform~on}~[0,1]~{\rm for~all~}i=1,\ldots,n\}.$$
Further, if $F$ satisfies Assumption \ref{assump:leftail}, all three combination statistics $T_{1,F,w}(\bm{U}_n)$, $T_{2,F,w}(\bm{U}_n)$, and $T_{3,F,w}(\bm{U}_n)$ are  tail-precise combining functions for $\mathcal{U}_{\ref{assmp:kotang}}^n$.

\end{theorem}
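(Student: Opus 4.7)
The plan is to formalize the chain of approximations already sketched in equations (\ref{eq:regvarying})--(\ref{eq:tailprecisecombfun}): replace the generic subexponential tail-equivalence step by the specific finite-$n$, dependent tail-equivalence theorems isolated above, then pipe the result through a Breiman-type scaling and the definition of $a_{n,\gamma}$. Concretely, for a fixed $\bm{U}_n \in \mathcal{U}_{\ref{assmp:UTAI}}^n$ I would invoke Theorem \ref{thm:chen} to obtain equation (\ref{eq:indep_tailprob}) for $j=1$, and for $\bm{U}_n \in \mathcal{U}_{\ref{assmp:kotang}}^n$ I would invoke part (\ref{thm:inS-2}) of Theorem \ref{thm:inS} to get the same equivalence for $j=1,2,3$; the extra hypothesis Assumption \ref{assump:leftail} in part~2 is exactly what Theorem \ref{thm:inS}(\ref{thm:inS-2}) requires.

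Given the pointwise equivalence $\bm{P}(S_{j,F,w}(\bm{U}_n) > t) \sim \sum_{i=1}^n \bm{P}(w_i X_i > t)$, the remaining steps are manipulations of regularly varying tails. Under Assumption \ref{assump:w_random} the weights $w_i$ are independent of $X_i$, so the Breiman-type identity (\ref{eq:regvarying_randomw}) gives $\bm{P}(w_i X_i > t) \sim (\bm{E} w_i^{\gamma})\overline{F}(t)$; summing over $i$ and applying (\ref{eq:regvarying2}) together with the definition (\ref{a_n}) of $a_{n,\gamma}$ yields $\bm{P}(S_{j,F,w}(\bm{U}_n) > t) \sim \overline{F}(a_{n,\gamma} t)$ as $t \to \infty$. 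Setting $t = \overline{F}^{-1}(\epsilon)/a_{n,\gamma}$, which tends to $\infty$ as $\epsilon \downarrow 0$, and using monotonicity of $\overline{F}$ to rewrite the event $\{T_{j,F,w}(\bm{U}_n) \le \epsilon\}$ as $\{a_{n,\gamma} S_{j,F,w}(\bm{U}_n) \ge \overline{F}^{-1}(\epsilon)\}$, one obtains $\bm{P}(T_{j,F,w}(\bm{U}_n) \le \epsilon) \sim \epsilon$ for that fixed $\bm{U}_n$, which is exactly (\ref{eq:tailprecisecombfun}).

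The main obstacle is upgrading this pointwise-in-$\bm{U}_n$ equivalence to the supremum required by the definition of a tail-precise combining function. The lower bound $\liminf_{\epsilon \downarrow 0} \epsilon^{-1}\sup_{\bm{U}_n} \bm{P}(T_{j,F,w}(\bm{U}_n) \le \epsilon) \ge 1$ is immediate by evaluating at the independent uniform configuration, which lies in both $\mathcal{U}_{\ref{assmp:UTAI}}^n$ and $\mathcal{U}_{\ref{assmp:kotang}}^n$. For the matching upper bound, $j=3$ is essentially free from the union bound $\bm{P}(\bigvee_i w_i X_i > t) \le \sum_i \bm{P}(w_i X_i > t) \sim \overline{F}(a_{n,\gamma} t)$, which holds for every dependence structure. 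For $j=1,2$ the $\limsup \le 1$ is not automatic; here I would inspect the proofs of Theorems \ref{thm:chen} and \ref{thm:inS}, where joint-tail contributions are controlled by quantities depending on the dependence only through finite, class-level constants (the UTAI conditional-tail limit in Chen's argument, the constant $M(n)$ in the Ko--Tang bound), so that the $o(\overline{F}(t))$ remainder can be absorbed uniformly over the class. This uniformity step is the only ingredient beyond what is already present in the derivation of (\ref{eq:tailprecisecombfun}).
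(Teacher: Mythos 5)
Your argument is essentially the paper's own: the theorem is proved in the text immediately preceding it by exactly the chain you describe --- citing Theorem \ref{thm:chen} for the UTAI case ($j=1$) and Theorem \ref{thm:inS}(\ref{thm:inS-2}) for the Ko--Tang case ($j=1,2,3$), then applying the Breiman-type scaling (\ref{eq:regvarying_randomw})--(\ref{eq:regvarying2}) with the constant $a_{n,\gamma}$ and substituting $t=\overline{F}^{-1}(\epsilon)$ to arrive at (\ref{eq:tailprecisecombfun}). Your closing paragraph on upgrading the pointwise-in-$\bm{U}_n$ equivalence to the supremum appearing in the definition of a tail-precise combining function actually goes beyond the paper, which argues only pointwise and does not address uniformity over the class.
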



The M-family merging functions $\tilde{a}_{r,n}M_{r,n}$ in \cite{vovk2020combining,vovk2022admissible} can be compared to our tail-precise combining functions $T_{j,F,w}(\bm{U}_n)$ for all $j=1,2,3$, 
when the calibrator is an inverse Pareto survival function. Let $F$ be a Pareto distribution function with $\overline{F}(x)=x^{-\gamma}$ and $\overline{F}^{-1}(x)=x^{-\frac{1}{\gamma}}$ for any $\gamma>0$. Since $X_i$ are always positive, it is trivial that $T_{2,F,w}(\bm{U}_n)=T_{1,F,w}(\bm{U}_n)$.
Assume equal weights $w_i=1/n$. Noting that $a_{n,\gamma}=n^{(\gamma-1)/\gamma}$ when all $w_i=1/n$,  $T_{3,F,\ref{assump:w_equal}}(\bm{U}_n)=\overline{F}(a_{n,\gamma}S_{3,F,\ref{assump:w_equal}}(\bm{U}_n))=\left\{a_{n,\gamma}S_{3,F,\ref{assump:w_equal}}(\bm{U}_n)\right\}^{-\gamma}=\left\{n^{-\frac{1}{\gamma}}\max \left(U_i^{-\frac{1}{\gamma}}\right)\right\}^{-\gamma}=n\min_{i=1}^nU_i$, which is equivalent to the merging function $nM_{-\infty,n}$ of the M-family and is free of $\gamma$. \cite{vovk2020combining} pointed out that
$nM_{-\infty,n}$ is equivalent to the Bonferroni's method. Remark \ref{remark:maxcomstat} reconfirms such observation that $T_{3,F,\ref{assump:w_equal}}(\bm{U}_n)$ is indeed equivalent to Bonferroni's method for any 
 subexponential $F$.

Now we check the  relationship between $T_{1,F,\ref{assump:w_equal}}(\bm{U}_n)$ and the M-family merging functions $\tilde{a}_{r,n}M_{r,n}$. Since
 $S_{1,F,w}(\bm{U}_n)=\frac{1}{n}(U_1^{-\frac{1}{\gamma}}+\cdots+U_n^{-\frac{1}{\gamma}})=\left(M_{-\frac{1}{\gamma},n}\right)^{-\frac{1}{\gamma}},$
where $M_{r,n}(U_1,\ldots,U_n)=\{(U_1^r+\cdots+U_n^r)/n\}^{1/r}$ as defined in \cite{vovk2020combining},
\begin{equation}\label{eq:comb.fun.1}T_{1,F,\ref{assump:w_equal}}(\bm{U}_n)=\overline{F}(a_{n,\gamma}S_{1,F,\ref{assump:w_equal}}(\bm{U}_n))=a_{n,\gamma}^{-\gamma}S_{1,F,\ref{assump:w_equal}}(\bm{U}_n)^{-\gamma}=n^{1-\gamma}M_{-\frac{1}{\gamma},n}.\end{equation}
Table 1 of \cite{vovk2020combining} illustrates (asymptotically) precise merging functions $\tilde{a}_{r,n}M_{r,n}$ of M-family for different values of their $r$. Letting $r=-\frac{1}{\gamma}$, we can find the relationship between the M-family mergning functions and our tail precise combining functions $T_{j,F,\ref{assump:w_equal}}(\bm{U}_n)$. Since Pareto distribution requires $\gamma>0$, we present the cases with $r<0$ only.  \cite{vovk2020combining} noted that $\tilde{a}_{-1,n}M_{-1,n}$ is not a merging function  for finite $K$.

\bigskip
\begin{table}[h!]
\centering
\begin{tabular}{ c|c||c|c }
 Range of $r$ & Range of $\gamma$ & $\tilde{a}_{r,n}$ of M-family& $n^{1-\gamma}$ from $T_{1,F,\ref{assump:w_equal}}(\bm{U}_n)$\\ \hline 
 $r\in(-1,0)$ & $\gamma>1$ & $(1-\frac{1}{\gamma})^{-\gamma}$, asymptotically precise &$n^{1-\gamma}\downarrow0$ as $n\uparrow\infty$\\ 
 $r=-1$ & $\gamma=1$ & $\log n\uparrow\infty$, asymptotic formula
 &1 \\ 
 $r<-1$&$\gamma\in(0,1)$&$\frac{n^{1+\gamma}}{1-\gamma}\uparrow\infty$, asymptotically precise &$n^{1-\gamma}\uparrow\infty$ as $n\uparrow\infty$
 \\\hline\hline \multicolumn{2}{c||}{$r=-\infty$}&$\tilde{a}_{-\infty,n}M_{-\infty,n}=nM_{-\infty,n}$&$T_{3,F,\ref{assump:w_equal}}(\bm{U}_n)=nM_{-\infty,n}$
\end{tabular}
\caption{Comparisons between M-family merging functions $\tilde{a}_{r,n}M_{r,n}$ of \cite{vovk2020combining} and our merging functions, $T_{1,F,\ref{assump:w_equal}}(\bm{U}_n)=n^{1-\gamma}M_{-\frac{1}{\gamma},n}$ and $T_{3,F,\ref{assump:w_equal}}(\bm{U}_n)=nM_{-\infty,n}$, with Pareto distribution in the equal weights case.}\label{table:mfamily}
\end{table}

It is worth noting our asymptotically tail-precise combining function $T_{1,F,\ref{assump:w_equal}}(\bm{U}_n)$ tends to be much smaller than the M-merging function $\tilde{a}_{r,n}M_{r,n}$ for all $\gamma>0$.  Considering that our combining functions are asymptotically tail precise for small enough $\alpha$, we can expect that \cite{vovk2020combining}'s M-merging functions would tend to under-reject. 
Section \ref{sec:gammachoice} discusses how small $\alpha$ is required, including which $\gamma$ to choose when $\alpha$ is not small enough to guarantee the validity of our combining tests.
We also note that when $\gamma=1$ and equal weights, our $T_{1,F,\ref{assump:w_equal}}(\bm{U}_n)$ is equivalent to \cite{wilson2019harmonic}'s HMP without the Landau adjustment. 

Considering a diverging number $n$ of tests also provides some insights on the choice of $\gamma$. The next section explores this aspect.

\section{When the number of tests diverges
}\label{sec:ninft}
\cite{fang2024heavy} proved in their Theorem 3 that the power of their test reaches 1 as the number of tests $n$ goes to infinity. However, their test is shown to be controlled only for a fixed $n$. This section proves the validity of the $p$-value combination methods with regularly varying calibrators for diverging numbers of tests, which can be considered as an extension of \cite{fang2024heavy}'s size results.

We first introduce the sum- and cumsum-combination statistics cases, which have been quite thoroughly studied in the tail equivalence literature. See the second half of Table \ref{table:literature} for a brief summary of such literature for dependent $X_i$. In fact, for these two statistics, there are routine arguments that extend the finite $n$ results to infinite $n$. There are three conditions that these routine arguments mainly rely on: the equivalence of the tail probability for finite $n$,  some summability assumptions on the weights, and some assumptions on the calibrator's ability to control the tail probability. In particular, for the last condition, most of these arguments allow only up to $F\in \mathcal{ERV}$ or $F\in\mathcal{C}$, as can be seen in Table \ref{table:literature}. It seems that subexponential calibrators beyond $\mathcal{C}$ cannot be used to guarantee the size control for diverging $n$. This suggests that subexponential distributions with too light or too heavy tails, such as Weibull and log-regularly varying, better be avoided being used as calibrators, despite their ability to control the size in finite samples.

Although $\mathcal{C}$ or $\mathcal{ERV}$ are slightly wider than $\mathcal{RV}_{-\gamma}$, in this section, we will limit our analysis to regularly varying calibrators $F\in\mathcal{RV}_{-\gamma}$. For one thing, (\ref{eq:tailprecisecombfun}) does not hold without the asymptotic scale invariance. This means that terms such as ``tail-precise'' cannot be used if one wishes to slightly extend the calibrator class beyond the regularly varying tails. The consistently varying class $\mathcal{C}$ still does not include other important subexponential distributions anyway.

In the $n\to\infty$ case, equation (\ref{eq:indep_tailprob}) would have been written as
\begin{equation}\label{eq:thm_ninfty}
\bm{P}\left(S_{j,F,w}(\bm{U}_\infty)>t\right)\sim\sum_{i=1}^\infty\bm{P}(w_iX_i>t)
\end{equation}
as $t\to\infty$, where $S_{1,F,w}(\bm{U}_\infty)=\sum_{i=1}^\infty w_i X_i=\sum_{i=1}^\infty w_i \overline{F}^{-1}(U_i)$, $S_{2,F,w}(\bm{U}_\infty)=\bigvee_{n=1}^\infty\left(\sum_{i=1}^n w_i X_i\right)$, and $S_{3,F,w}(\bm{U}_\infty)=\bigvee_{i=1}^\infty w_iX_i$. However, for $j=1$ and $2$, this naive extension (\ref{eq:thm_ninfty}) to an infinite $n$ case does not work unless the critical value $t$ also diverges as $n\to\infty$ at a certain rate of $n$. This will be discussed after introducing three propositions below. Before we delve into our $n$ infinite case, we first define pseudo-weights $\{\widetilde{w}_i\}$ and some summability conditions of pseudo-weights to introduce existing literature. A pseudo-weight $\widetilde{w}_i$, in our context, will be a function of a raw weight $w_i$ and the number of tests $n$, where $w_i$ can be either random or nonrandom. 
These summability conditions depend on the magnitude of the index $\gamma$ of $\mathcal{RV}_{-\gamma}$.

When weights are nonrandom but possibly with unequal weights as in Assumption \ref{assump:w_nonrandom}, \cite{bae2017note}'s condition is useful, of which result is further extended by \cite{gao2019asymptotic}. Although the original conditions allow negative weights, we state their condition only for positive weights according to our setting.
\begin{assumpD}[\cite{bae2017note,gao2019asymptotic}]\label{assump:wi_ninfty_gao}
Consider a nonnegative nonrandom sequence $\{\tilde{w}_i\}$. There exists $0\leq q\leq \min\{1,\gamma\}$ such that
$$\sum_{i=1}^\infty \widetilde{w}_i^{q}<\infty.$$
\end{assumpD}

When weights are allowed to be random as in Assumption \ref{assump:w_random} but with nonnegative calibrated $p$-values, a little heavier assumption on the pseudo-weights is necessary.
\begin{assumpD}[\cite{wang2006tail,chen2009sums}]\label{assump:wi_ninfty_chen}
   Consider a nonnegative random sequence $\{\tilde{w}_i\}$. When $0<\gamma< 1$, there exist $q_1$ and $q_2$ such that $0<q_1<\gamma<q_2<1$ and
$$\sum_{i=1}^\infty \left\{\bm{E}\widetilde{w}_i^{q_1}\vee \bm{E}\widetilde{w}_i^{q_2}\right\}<\infty.$$
When $\gamma\geq 1$, there exist $q_1$ and $q_2$ such that $0<q_1<\gamma<q_2<\infty$ and
$$\sum_{i=1}^\infty\left\{(\bm{E}\widetilde{w}_i^{q_1})^{\frac{1}{q_1}}\vee (\bm{E}\widetilde{w}_i^{q_2})^{\frac{1}{q_2}}\right\}<\infty.$$
\end{assumpD}

In addition, if the calibrated $p$-values are possibly negative, a slightly stronger set of assumptions is needed.
\begin{assumpD}[\cite{yi2011approximation,geng2019tail}]\label{assump:wi_ninfty_yi}
 Consider a nonnegative random sequence $\{\tilde{w}_i\}$.  When $0<\gamma< 1$, there exists  $\delta\in(0,\min\{\gamma,1-\gamma\})$ such that
$$\sum_{i=1}^\infty \bm{E}\widetilde{w}_i^{\gamma-\delta}<\infty.$$ 
When $\gamma\geq 1$, there exists $\delta\in(0,\gamma)$ such that $$\sum_{i=1}^\infty (\bm{E}\widetilde{w}_i^{\gamma-\delta})^{\frac{1}{\gamma+\delta}}<\infty.$$ 
\end{assumpD}

The following three propositions are due to Theorem 2.1 of \cite{gao2019asymptotic}, Theorem 3.3 of \cite{chen2009sums}, and Theorem 3.2 of \cite{geng2019tail}, respectively. We simplify their statements to limit the calibrators to the regularly varying ones. Recall that for any regularly varying $F\in\mathcal{RV}_{-\gamma}$, $\sum_{i=1}^\infty\bm{P}(\widetilde{w}_iX_i>t)\sim \left(\sum_{i=1}^\infty \widetilde{w}_i^\gamma\right)\overline{F}(t)$ when the weights are nonrandom, and  $\sum_{i=1}^\infty\bm{P}(\widetilde{w}_iX_i>t)\sim \left(\sum_{i=1}^\infty \bm{E}\widetilde{w}_i^\gamma\right)\overline{F}(t)$ when the weights are possibly random.

\begin{proposition}[\cite{gao2019asymptotic}]\label{proposition:infinite_n_gao}
Let  $F\in\mathcal{RV}_{-\gamma}$ with $p_F=1$ and $q_F=0$ from the balance condition. Let  $X_i=\overline{F}^{-1}(U_i)$. 
 Assume that $\{U_i\}_{i=1}^n$ satisfies Assumption \ref{assmp:geluktang} or  \ref{assmp:kotang} for each $n$.  If the pseudo weights $\{\widetilde{w}_i\}$ satisfy Assumption \ref{assump:wi_ninfty_gao}, as $t\to\infty$,
    $$\bm{P}\left(\sum_{i=1}^\infty \widetilde{w}_iX_i>t\right)\sim \left(\sum_{i=1}^\infty \widetilde{w}_i^\gamma\right)\overline{F}(t).$$
\end{proposition}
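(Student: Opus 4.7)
The plan is to reduce to the finite-$n$ tail equivalence that is already in hand through the regularly varying specialization of Theorem \ref{thm:inS} (or directly from \cite{geluk2009asymptotic} and \cite{ko2008sums}) and then push $n$ to infinity, using Assumption \ref{assump:wi_ninfty_gao} to quantify how much is lost by truncation. Write $S=\sum_{i=1}^\infty\widetilde{w}_iX_i$, $S_n=\sum_{i=1}^n\widetilde{w}_iX_i$, and $R_n=\sum_{i=n+1}^\infty\widetilde{w}_iX_i$, so $S=S_n+R_n$. Because $p_F=1$, the left tail of $F$ is negligible and can be absorbed into constants in every estimate below. The summability condition $\sum\widetilde{w}_i^q<\infty$ forces $\widetilde{w}_i\to 0$, so $\widetilde{w}_i^\gamma\le\widetilde{w}_i^q$ eventually and $\sum_{i=1}^\infty\widetilde{w}_i^\gamma<\infty$, which makes the right-hand side of the claim well defined.

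The lower bound is immediate: by monotonicity $\bm{P}(S>t)\ge\bm{P}(S_n>t)$, and the finite-$n$ tail equivalence together with (\ref{eq:regvarying1}) yields
$$\liminf_{t\to\infty}\frac{\bm{P}(S>t)}{\overline{F}(t)}\ge\sum_{i=1}^n\widetilde{w}_i^\gamma$$
for each $n$, and sending $n\to\infty$ gives the desired lower asymptotic. For the upper bound, fix $\delta\in(0,1)$ and split
$$\bm{P}(S>t)\le\bm{P}\bigl(S_n>(1-\delta)t\bigr)+\bm{P}(R_n>\delta t).$$
The first term is again dispatched by the finite-$n$ result and regular variation, giving $\limsup_{t\to\infty}\bm{P}(S_n>(1-\delta)t)/\overline{F}(t)\le(1-\delta)^{-\gamma}\sum_{i=1}^\infty\widetilde{w}_i^\gamma$.

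The main technical task is to show $\lim_{n\to\infty}\limsup_{t\to\infty}\bm{P}(R_n>\delta t)/\overline{F}(t)=0$. Following the strategy of \cite{gao2019asymptotic}, I would decompose each summand as $\widetilde{w}_iX_i=\widetilde{w}_iX_i\mathbb{I}_{\widetilde{w}_iX_i\le\delta t/2}+\widetilde{w}_iX_i\mathbb{I}_{\widetilde{w}_iX_i>\delta t/2}$ and treat the two pieces separately. The ``big-jump'' piece is handled by a union bound and the Breiman/scale-invariance identity (\ref{eq:regvarying}): $\sum_{i>n}\bm{P}(\widetilde{w}_iX_i>\delta t/2)\sim(2/\delta)^\gamma\bigl(\sum_{i>n}\widetilde{w}_i^\gamma\bigr)\overline{F}(t)$, which vanishes (relative to $\overline{F}(t)$) as $n\to\infty$ since $\sum_{i=1}^\infty\widetilde{w}_i^\gamma<\infty$. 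The ``small-jump'' piece is handled via a Markov bound with an exponent $q'$ slightly larger than $\gamma$ (when $\gamma<1$, use the subadditivity $(\sum a_i)^{q'}\le\sum a_i^{q'}$ for $q'\in(\gamma,1)$; when $\gamma\ge 1$, use a first-moment argument together with Karamata) and Karamata's theorem to estimate the truncated $q'$th moment of $X$ as $\sim (\gamma/(q'-\gamma))(\delta t)^{q'}\overline{F}(t)$; the resulting bound is of order $\bigl(\sum_{i>n}\widetilde{w}_i^\gamma\bigr)\overline{F}(t)$ up to constants and again vanishes as $n\to\infty$ by Assumption \ref{assump:wi_ninfty_gao}. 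Letting $\delta\downarrow 0$ then matches the upper bound to the lower bound.

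The main obstacle is the uniform-in-$t$ control of the small-jump contribution, which requires a delicate Karamata/Potter estimate with an exponent chosen to just exceed $\gamma$ and separate arguments in the $\gamma<1$ versus $\gamma\ge 1$ regimes; this is the technical heart borrowed from \cite{gao2019asymptotic}. Once that estimate is in place, the head-tail splitting, the finite-$n$ step, and the passage through $\delta\downarrow 0$ and $n\uparrow\infty$ are routine.
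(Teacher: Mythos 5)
The paper does not prove this proposition at all: it is imported verbatim as Theorem 2.1 of \cite{gao2019asymptotic}, merely restated with the calibrator specialized to $\mathcal{RV}_{-\gamma}$. So there is no in-paper argument to compare against; your proposal is an outline of the cited paper's proof strategy rather than an alternative to anything in this manuscript. The roadmap you give (head--tail splitting, finite-$n$ tail equivalence for the head, big-jump/small-jump decomposition of the remainder with a union bound plus Breiman for the big jumps and a truncated-moment Markov bound plus Karamata/Potter for the small jumps) is indeed the standard architecture, and your observation that $\sum_i\widetilde{w}_i^\gamma<\infty$ follows from Assumption \ref{assump:wi_ninfty_gao} is correct.

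Two steps, however, do not close as written. First, the lower bound ``by monotonicity $\bm{P}(S>t)\ge\bm{P}(S_n>t)$'' is false in general: $p_F=1$, $q_F=0$ only makes $F(-t)=o(\overline{F}(t))$; it does not make $X_i$ nonnegative, so $R_n$ can be negative and $\{S_n>t\}\not\subset\{S>t\}$. You need the two-sided splitting $\bm{P}(S>t)\ge\bm{P}(S_n>(1+\delta)t)-\bm{P}(R_n<-\delta t)$ and a separate (easy, but not free) estimate showing the left tail of $R_n$ is $o(\overline{F}(t))$; ``absorbed into constants'' is not an argument. Second, for $\gamma\ge 1$ the small-jump piece cannot be dispatched by ``a first-moment argument together with Karamata'': Markov at order one gives a bound of order $t^{-1}\sum_{i>n}\widetilde{w}_i$, and $t^{-1}\gg\overline{F}(t)\asymp t^{-\gamma}L(t)$ when $\gamma>1$, so the ratio to $\overline{F}(t)$ blows up for fixed $n$. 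One must use a moment of order $q'>\gamma$ of the \emph{centered, truncated} sum, and since the $X_i$ are only Assumption \ref{assmp:geluktang}/\ref{assmp:kotang}-dependent (not independent), the requisite Rosenthal-type moment inequality is exactly the nontrivial content of \cite{gao2019asymptotic} that you defer. Deferring it is legitimate given that the proposition is itself a citation, but you should be explicit that without that lemma your argument is a skeleton, not a proof. A minor further point: the finite-$n$ step should be justified for the pseudo-weights $\widetilde{w}_i$, which satisfy only a summability condition rather than Assumption \ref{assump:w_equal}--\ref{assump:w_random}; this is harmless because scaling by positive constants preserves regular variation and the Geluk--Tang/Ko--Tang results allow nonidentical marginals, but it deserves a sentence.
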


\begin{proposition}[\cite{chen2009sums}]\label{proposition:infinite_n_chen}
Let  $F\in\mathcal{RV}_{-\gamma}$. 
 Assume that the calibrated $p$-values $\{X_i\}_{i=1}^n$ satisfy Assumption \ref{assmp:UTAI} for each $n$, and $X_i=\overline{F}^{-1}(U_i)$ are nonnegative. If the nonnegative random pseudo-weights $\{\widetilde{w}_i\}$ are independent of the calibrated $p$-values $\{X_i\}$ and satisfy  Assumption \ref{assump:wi_ninfty_chen}, as $t\to\infty$,
    $$\bm{P}\left(\sum_{i=1}^\infty \widetilde{w}_iX_i>t\right)\sim \left(\sum_{i=1}^\infty \bm{E}\widetilde{w}_i^\gamma\right)\overline{F}(t).$$
\end{proposition}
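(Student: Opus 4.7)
The plan is to reduce the infinite-$n$ claim to the finite-$n$ tail equivalence already supplied by Theorem \ref{thm:chen} (whose proof ultimately invokes Breiman's lemma, which under the moment conditions of Assumption \ref{assump:wi_ninfty_chen} and the independence of $\widetilde{w}_i$ from $X_i$ yields $\bm{P}(\widetilde{w}_i X_i > t) \sim \bm{E}\widetilde{w}_i^{\gamma}\cdot \overline{F}(t)$), and then to show that the tail of the infinite remainder is negligible uniformly in $t$ as the truncation level grows. Write $S_\infty = \sum_{i=1}^\infty \widetilde{w}_i X_i$, $S_N = \sum_{i=1}^N \widetilde{w}_i X_i$, and $T_N = S_\infty - S_N \geq 0$.

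The lower bound is immediate from monotonicity: since $S_\infty \geq S_N$, Theorem \ref{thm:chen} gives
$$\liminf_{t \to \infty} \frac{\bm{P}(S_\infty > t)}{\overline{F}(t)} \geq \sum_{i=1}^N \bm{E}\widetilde{w}_i^\gamma,$$
and monotone convergence as $N \to \infty$ delivers the full sum (which is finite under Assumption \ref{assump:wi_ninfty_chen} via the interpolation $q_1 < \gamma < q_2$ between the two given summable series). For the upper bound, I would fix $\epsilon \in (0,1)$ and split $\bm{P}(S_\infty > t) \leq \bm{P}(S_N > (1-\epsilon)t) + \bm{P}(T_N > \epsilon t)$; the first piece is $\sim (1-\epsilon)^{-\gamma} \bigl(\sum_{i=1}^N \bm{E}\widetilde{w}_i^\gamma\bigr) \overline{F}(t)$ by Theorem \ref{thm:chen} and regular variation, so the remaining task is to establish
$$\limsup_{N \to \infty}\ \limsup_{t \to \infty} \frac{\bm{P}(T_N > \epsilon t)}{\overline{F}(t)} = 0$$
and then let $\epsilon \downarrow 0$.

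The remainder estimate I would handle by a single-big-jump decomposition:
$$\bm{P}(T_N > \epsilon t) \leq \bm{P}\Big(\bigvee_{i > N} \widetilde{w}_i X_i > \tfrac{\epsilon t}{2}\Big) + \bm{P}\Big(\sum_{i > N} \widetilde{w}_i X_i\, \mathbb{I}_{\widetilde{w}_i X_i \leq \epsilon t/2} > \tfrac{\epsilon t}{2}\Big).$$
The maximum piece is bounded by the union of individual tails; Breiman's lemma makes each tail $\sim \bm{E}\widetilde{w}_i^\gamma (\epsilon/2)^{-\gamma}\overline{F}(t)$, so the aggregate coefficient is $(\epsilon/2)^{-\gamma} \sum_{i > N} \bm{E}\widetilde{w}_i^\gamma$, which vanishes as $N \to \infty$ by Assumption \ref{assump:wi_ninfty_chen}. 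The truncated-sum piece is controlled by a Markov inequality at exponent $q_2$: when $\gamma < 1$, $q_2$ can be taken below $1$ and the subadditivity $(\sum a_i)^{q_2} \leq \sum a_i^{q_2}$ reduces the bound to a series in $\bm{E}\widetilde{w}_i^{q_2}$; when $\gamma \geq 1$, $q_2$ is above $1$ and Minkowski's inequality forces a series in the $L^{q_2}$ norms $(\bm{E}\widetilde{w}_i^{q_2})^{1/q_2}$ instead---this is precisely why Assumption \ref{assump:wi_ninfty_chen} is stated in the two distinct forms it is. Karamata's theorem converts $\bm{E}[X_i^{q_2}\mathbb{I}_{X_i \leq \epsilon t/2}]$ into a constant multiple of $t^{q_2}\overline{F}(t)$, so the Markov bound reduces to a constant multiple of $\overline{F}(t)$ with the constant shrinking to zero in $N$.

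The main obstacle is this dual-moment bookkeeping: the exponent $q_2 > \gamma$ is what makes the Karamata--Markov bound on the truncated sum of the correct order $\overline{F}(t)$, while $q_1 < \gamma$ governs the complementary upper-tail contribution via the integrability of $\bm{E}[X^{q_1}\mathbb{I}_{X > t}]$, and the two exponents must cooperate so that the constants in both sub-bounds vanish as $N \to \infty$. The regime split at $\gamma = 1$ is not cosmetic but genuine, reflecting the failure of subadditivity of $x \mapsto x^{q_2}$ above exponent one and therefore the need to pass from termwise $q_2$-moments to $L^{q_2}$ norms. The UTAI dependence of $\{X_i\}$ enters only in the finite-$N$ step; the remainder estimate itself uses only nonnegativity and union bounds.
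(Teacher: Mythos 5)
The paper does not actually prove this proposition: it is imported verbatim (modulo restricting $F$ to $\mathcal{RV}_{-\gamma}$) from Theorem 3.3 of \cite{chen2009sums}, so there is no in-paper argument to compare against. Your reconstruction follows the standard route of that literature (truncate at $N$, apply the finite-$n$ tail equivalence, then kill the remainder by a single-big-jump split with a union bound on the maximum and a Karamata--Markov bound at exponent $q_2$ on the truncated sum), and the outline is essentially correct, including the correct diagnosis of why the assumption bifurcates at $\gamma=1$ (subadditivity of $x\mapsto x^{q_2}$ versus Minkowski) and the correct observation that UTAI is only needed in the finite-$N$ step.

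Two places in the sketch need more than assertion. First, in the maximum piece you pass from the termwise Breiman asymptotic $\bm{P}(\widetilde{w}_iX_i>s)\sim\bm{E}\widetilde{w}_i^{\gamma}\,\overline{F}(s)$ to the statement that the aggregate coefficient is $(\epsilon/2)^{-\gamma}\sum_{i>N}\bm{E}\widetilde{w}_i^{\gamma}$; an asymptotic equivalence cannot be summed over infinitely many indices without a bound that is uniform in $i$, namely a Potter-type estimate $\bm{P}(\widetilde{w}_iX_i>s)\leq C\left(\bm{E}\widetilde{w}_i^{q_1}+\bm{E}\widetilde{w}_i^{q_2}\right)\overline{F}(s)$ valid for all $s\geq s_0$ with $C,s_0$ independent of $i$ (this is precisely Lemma 3.2 of Wang and Tang's framework and is the real reason Assumption \ref{assump:wi_ninfty_chen} carries two exponents straddling $\gamma$, not just the truncated-sum bookkeeping). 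The same uniformity is needed to dominate the Karamata bound on $\bm{E}\bigl[\widetilde{w}_i^{q_2}X_i^{q_2}\mathbb{I}_{\widetilde{w}_iX_i\leq \epsilon t/2}\bigr]$ when $\widetilde{w}_i$ is unbounded. Second, you invoke Theorem \ref{thm:chen} for the finite-$N$ input, but as stated in the paper that theorem assumes weights satisfying Assumption \ref{assump:w_random} (almost surely in $(0,1)$ and with expectations summing to one); the pseudo-weights $\widetilde{w}_i$ need satisfy neither constraint, so the finite-$N$ step must cite the unnormalized version of the weighted-sum result in \cite{chen2009sums} (which only needs $\bm{E}\widetilde{w}_i^{q_2}<\infty$ for some $q_2>\gamma$). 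Neither point is fatal, but both must be supplied for the argument to close.
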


\begin{proposition}[\cite{geng2019tail}]\label{proposition:infinite_n_geng}
    Let  $F\in\mathcal{RV}_{-\gamma}$ that satisfies Assumption \ref{assump:leftail}. Let $X_i=\overline{F}^{-1}(U_i)$. 
 Assume that  the calibrated $p$-values $\{X_i\}_{i=1}^n$ satisfy Assumption \ref{assmp:kotang} for each $n$. If the nonnegative random pseudo-weights $\{\widetilde{w}_i\}$ satisfy Assumption \ref{assump:wi_ninfty_yi}, as $t\to\infty$,
    $$\bm{P}\left(\sum_{i=1}^\infty \widetilde{w}_iX_i>t\right)\sim\bm{P}\left(\bigvee_{n=1}^\infty\sum_{i=1}^n \widetilde{w}_iX_i>t\right)\sim \left(\sum_{i=1}^\infty \bm{E}\widetilde{w}_i^\gamma\right)\overline{F}(t).$$
\end{proposition}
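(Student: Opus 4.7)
The plan is to derive the infinite-$n$ statement from the finite-$n$ tail equivalence in Theorem \ref{thm:inS}(\ref{thm:inS-2}) by a standard truncation argument, using the moment summability in Assumption \ref{assump:wi_ninfty_yi} to dispose of the tail of the series and Assumption \ref{assump:leftail} to keep negative excursions under control. Writing $S_N=\sum_{i=1}^N\widetilde{w}_iX_i$ and $R_N=\sum_{i=N+1}^\infty \widetilde{w}_iX_i$, for a fixed integer $N$ and fraction $\delta\in(0,1)$ one has
$$\bm{P}(S_N>(1+\delta)t)-\bm{P}(|R_N|>\delta t)\leq\bm{P}(S_N+R_N>t)\leq\bm{P}(S_N>(1-\delta)t)+\bm{P}(R_N>\delta t).$$
Theorem \ref{thm:inS}(\ref{thm:inS-2}) applied to the $N$-fold sum, together with Breiman's identity (\ref{eq:regvarying_randomw}) and the regular variation $\overline{F}((1\pm\delta)t)\sim (1\pm\delta)^{-\gamma}\overline{F}(t)$, gives $\bm{P}(S_N>(1\pm\delta)t)\sim (1\pm\delta)^{-\gamma}\overline{F}(t)\sum_{i=1}^N \bm{E}\widetilde{w}_i^\gamma$ as $t\to\infty$.

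The crux is to show that the residual probability is negligible uniformly in $t$ once $N$ is large, namely
$$\limsup_{N\to\infty}\limsup_{t\to\infty}\frac{\bm{P}(|R_N|>\delta t)}{\overline{F}(t)}=0.$$
This is where Assumption \ref{assump:wi_ninfty_yi} is essential. Splitting $X_i=X_i^+-X_i^-$, the tail of $\sum_{i>N}\widetilde{w}_iX_i^+$ is bounded by a Fuk--Nagaev (or Rosenthal-type) inequality combined with (\ref{eq:regvarying_randomw}), yielding a bound of order $\overline{F}(t)\sum_{i>N}\bm{E}\widetilde{w}_i^{\gamma\pm\delta'}$ that vanishes as $N\to\infty$ thanks to Assumption \ref{assump:wi_ninfty_yi}; Assumption \ref{assump:leftail} then provides the matching control on $\sum_{i>N}\widetilde{w}_iX_i^-$ so that its contribution stays subordinate to $\overline{F}(t)$. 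Sending $t\to\infty$, then $N\to\infty$, then $\delta\downarrow 0$ pins $\bm{P}(\sum_{i=1}^\infty \widetilde{w}_iX_i>t)$ to $\left(\sum_{i=1}^\infty \bm{E}\widetilde{w}_i^\gamma\right)\overline{F}(t)$.

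For the running maximum, one direction is automatic: since $\sum_{i=1}^\infty \widetilde{w}_iX_i$ converges almost surely under Assumptions \ref{assump:wi_ninfty_yi} and \ref{assump:leftail}, $\bigvee_{n\geq 1}\sum_{i=1}^n\widetilde{w}_iX_i\geq \sum_{i=1}^\infty \widetilde{w}_iX_i$, yielding a lower bound. The reverse is obtained by replacing $S_N$ in the truncation above with its running maximum (for which Theorem \ref{thm:inS}(\ref{thm:inS-2}) supplies the same asymptotic) and by controlling $\sup_{n>N}\left|\sum_{i=N+1}^n \widetilde{w}_iX_i\right|$ via an Ottaviani-type maximal inequality coupled with the residual estimate from the previous step. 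The main obstacle throughout is the Fuk--Nagaev-style residual bound: Assumption \ref{assump:wi_ninfty_yi} is calibrated to supply just enough moments above and below the index $\gamma$ to kill the residual tail at the rate $\overline{F}(t)$, while Assumption \ref{assump:leftail} prevents the calibrator's left tail from conspiring with negative contributions to produce spurious exceedances.
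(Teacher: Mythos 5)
First, a point of reference: the paper does not prove this proposition at all --- it is imported verbatim as Theorem 3.2 of \cite{geng2019tail}, so there is no in-paper argument to compare against. Your outline does follow the general strategy by which results of this type are actually established in that literature (truncate at $N$, apply the finite-$n$ tail equivalence to $S_N$, show the residual $R_N$ is $o(\overline{F}(t))$ uniformly once $N$ is large, then let $t\to\infty$, $N\to\infty$, $\delta\downarrow0$), and your double-$\limsup$ formulation of the residual requirement is the right one.

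However, the tools you name for the crux steps would fail here. Both the Fuk--Nagaev/Rosenthal inequality (for the residual) and the Ottaviani maximal inequality (for the running maximum) presuppose independent summands, whereas the $X_i$ in this proposition are only assumed to satisfy Assumption \ref{assmp:kotang}, a conditional tail-domination condition that is far from independence; invoking these inequalities is not a loose end but a step that does not go through as stated. The standard repair for the residual is a union bound with a summable allocation sequence, $\bm{P}(\sum_{i>N}\widetilde{w}_iX_i^+>\delta t)\leq\sum_{i>N}\bm{P}(\widetilde{w}_iX_i^+>c_i\delta t)$ with $\sum_{i>N}c_i\leq1$, followed by a Breiman/Potter-type bound that is uniform over $i$; this uses no joint structure of the $X_i$. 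Relatedly, your claimed residual bound of order $\overline{F}(t)\sum_{i>N}\bm{E}\widetilde{w}_i^{\gamma\pm\delta'}$ invokes the moment $\bm{E}\widetilde{w}_i^{\gamma+\delta'}$, which Assumption \ref{assump:wi_ninfty_yi} does not control (unlike Assumption \ref{assump:wi_ninfty_chen}, it only bounds moments at $\gamma-\delta$, or $(\bm{E}\widetilde{w}_i^{\gamma-\delta})^{1/(\gamma+\delta)}$ when $\gamma\geq1$), so the exponents must be rearranged to match what is actually assumed. Finally, the running maximum is handled much more cheaply without any maximal inequality: since $\sum_{i=1}^\infty\widetilde{w}_iX_i\leq\bigvee_{n\geq1}\sum_{i=1}^n\widetilde{w}_iX_i\leq\sum_{i=1}^\infty\widetilde{w}_iX_i^+$, it suffices to show that the two ends of this sandwich share the asymptotic tail $\left(\sum_{i=1}^\infty\bm{E}\widetilde{w}_i^\gamma\right)\overline{F}(t)$, which is exactly where Assumption \ref{assump:leftail} enters to show the negative parts do not alter the right-tail asymptotics.
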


Assumptions \ref{assump:wi_ninfty_gao}--\ref{assump:wi_ninfty_yi} are not satisfied  
by naively letting  $\widetilde{w}_i=w_i$ with $\sum_{i=1}^nw_i=1$ for each $n$. For instance, in the equal weights case  $w_i=n^{-1}$,
$\sum_{i=1}^\infty w_i^q= \sum_{i=1}^\infty n^{-q}=n^{1-q}\to\infty$ as $n\to\infty$ for any $q\in(0,1)$. This means that the critical value $t_{\alpha,n}=(p_Fl_F)^{\frac{1}{\gamma}}\alpha^{-\frac{1}{\gamma}}\left(\sum_{i=1}^n \bm{E}w_i^\gamma\right)^{\frac{1}{\gamma}}$ that makes $\sum_{i=1}^n\bm{P}(w_iX_i>t_{\alpha,n})\sim\alpha$ should diverge as $n\to\infty$. Therefore, unless the significance level $\alpha=\alpha_n$ coverage to 0 as $n\to\infty$, Assumptions \ref{assump:wi_ninfty_gao}--\ref{assump:wi_ninfty_yi} cannot be satisfied. 

Therefore, we need some additional assumptions on the significance level $\alpha_n$'s rate of convergence to 0. The following two assumptions on the significance level $\alpha_n$ are for nonrandom weights. We first start with a case when a lower bound on the minimum weight is assumed to be of order $n^{-1}$, 
\begin{equation}\label{assum:minweight}\min_{i=1,\ldots,n}w_i\geq c_0n^{-1}\end{equation}
for some positive constant $c_0$. In other words, $\min_{i=1,\ldots,n}w_i=O(n^{-1})$.  Note that (\ref{assum:minweight})  also includes the case of the equal weight, Assumption \ref{assump:w_equal}. This assumption on the minimum weight has also been assumed in \cite{liu2020cauchy}'s Theorem 3 for the $n\to\infty$ case. Under the assumption (\ref{assum:minweight}), the following condition on the significance level is needed to guarantee the tail equivalence of the sum-combination statistic with $n\to\infty$.
\begin{assumpE}\label{assump:alpha_equalw}
     The significance level $\alpha_n$ converges to zero as the number $n$ of tests increases such that 
  $\alpha_n=o(n^{-(\gamma-1)    \mathbb{I}_{\{\gamma>1\}}})$ for any given $\gamma>0$.
\end{assumpE}
 This assumption  requires $\alpha_n=o(1)$ for any $\gamma\in(0,1]$ regardless of the speed of the increase of $n$, whereas when $\gamma>1$, $\alpha_n$ should converge to 0 exponentially fast as $n$ increases: $\alpha_n=o(n^{-\gamma-1})$. Therefore, when $w_i$ are nonrandom and asymptotically equal, $\gamma\in(0,1]$ are preferred over $\gamma>1$ due to their ability to control the size with a relatively larger significance level.

In order to accommodate any weights including an extreme case with $w_i=1$ for some $i$ and $w_j=0$ for all $j\neq i$, one needs a stronger condition on the significance level $\alpha_n$ for $\gamma\in(0,1)$. 
\begin{assumpE}\label{assump:alpha_nonrandw}
     The significance level $\alpha_n$ converges to zero as the number $n$ of tests increases such that 
  $\alpha_n=o(n^{-|1-\gamma|})$ for any given $\gamma>0$.
\end{assumpE}

The following theorem, which is mainly due to Proposition \ref{proposition:infinite_n_gao}, states the validity of the sum-combination statistic  with nonrandom weights. regularly varying calibrators that is either nonnegative or with limited left tails are considered. The size of the global test for large $n$ can be controlled as long as the significance level $\alpha_n$ is small enough.  

\begin{theorem}\label{thm:ninfty_gao} Let $F\in\mathcal{RV}_{-\gamma}$ with some $\gamma>0$. Consider a calibrator $F$ and  the calibrated $p$-value $\{X_i\}_{i=1}^n$ that satisfy one of the followings:
 \begin{enumerate}[{\normalfont (F-i)}]
 \item $p_F=1$ and $q_F=0$ from the balance condition for $F$, and  $\{X_i\}_{i=1}^n$ satisfies Assumption $Bj$, with $j=3$ or 4 (Assumptions \ref{assmp:geluktang} or  \ref{assmp:kotang}) for each $n$.
 \item $F$ is concentrated on $[0,\infty)$, and $\{X_i\}_{i=1}^n$ satisfies Assumption $Bj$, with $j=1$ or 2 (Assumptions \ref{assmp:UTAI} or \ref{assmp:TAI})  for each $n$.
 \end{enumerate}
 In addition, assume that the weight $\{w_i\}_{i=1}^n$ and the significance level $\alpha=\alpha_n$ satisfy one of the followings:
 \begin{enumerate}[{\normalfont (w-i)}]
   \item $\{w_i\}_{i=1}^n$ satisfies Assumption \ref{assump:w_equal} for each $n$, and $\alpha_n$ satisfies Assumption \ref{assump:alpha_equalw}.
     \item $\{w_i\}_{i=1}^n$ satisfies Assumption \ref{assump:w_nonrandom} for each $n$, and $\alpha_n$ satisfies Assumption \ref{assump:alpha_nonrandw}.
 \end{enumerate}
 Then with the significance level $\alpha_n$ above, the sum-combination statistic $T_{1,F,w}(\bm{U}_n)$ defined in (\ref{Tn}) is an asymptotically tail-precise combining function for $\mathcal{U}_{Bj}^\infty$. 
  %
\end{theorem}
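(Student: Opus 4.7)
The plan is to recast the target as a tail-probability statement at an explicit critical value $t_n$, apply the tail equivalences in Proposition~\ref{proposition:infinite_n_gao} (case (F-i)) or Proposition~\ref{proposition:infinite_n_chen} (case (F-ii)), and close via the scale invariance \eqref{eq:regvarying} of $\mathcal{RV}_{-\gamma}$.

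First I would set $t_n=\overline{F}^{-1}(\alpha_n)/a_{n,\gamma}$, so that $\overline{F}(a_{n,\gamma}t_n)=\alpha_n$; monotonicity of $\overline{F}$ then gives the identity $\{T_{1,F,w}(\bm{U}_n)\leq\alpha_n\}=\{S_{1,F,w}(\bm{U}_n)>t_n\}$, so it suffices to show
\begin{equation*}
\sup_{\bm{U}_n\in\mathcal{U}_{Bj}^n}\bm{P}(S_{1,F,w}(\bm{U}_n)>t_n)\sim\alpha_n.
\end{equation*}
Next I would verify $t_n\to\infty$. Since $\overline{F}^{-1}$ is regularly varying at $0$ of index $-1/\gamma$ (by inverting the balance condition on $\overline{F}\in\mathcal{RV}_{-\gamma}$), combining this with the explicit form of $a_{n,\gamma}$ in \eqref{a_n} reduces the check to routine computation. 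In particular, under (w-i) with $\gamma>1$, equal weights give $a_{n,\gamma}\sim n^{(\gamma-1)/\gamma}$, and the rate $\alpha_n=o(n^{-(\gamma-1)})$ prescribed by Assumption~\ref{assump:alpha_equalw} is exactly what forces $t_n\to\infty$; for $\gamma\leq1$, $\alpha_n\to0$ alone suffices; and (w-ii) is handled analogously via Assumption~\ref{assump:alpha_nonrandw}.

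I would then apply the tail equivalence by zero-padding the weights to an infinite pseudo-weight sequence $\widetilde{w}_i=w_i\mathbb{I}_{\{i\leq n\}}$, which trivially satisfies Assumption~\ref{assump:wi_ninfty_gao} (respectively \ref{assump:wi_ninfty_chen}) because the sum collapses to a finite one. Proposition~\ref{proposition:infinite_n_gao} (respectively~\ref{proposition:infinite_n_chen}) then yields, for each fixed $n$,
\begin{equation*}
\bm{P}(S_{1,F,w}(\bm{U}_n)>t)\sim\Bigl(\sum_{i=1}^n w_i^{\gamma}\Bigr)\overline{F}(t)\quad\text{as }t\to\infty,
\end{equation*}
and the scale invariance \eqref{eq:regvarying} rewrites the right-hand side as $\overline{F}(a_{n,\gamma}t)$, which equals $\alpha_n$ at $t=t_n$. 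Since these propositions hold uniformly over any dependence structure satisfying Assumptions~\ref{assmp:UTAI}--\ref{assmp:kotang}, taking the supremum over $\bm{U}_n\in\mathcal{U}_{Bj}^n$ introduces no additional slack.

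The hard part will be upgrading the pointwise-in-$n$ tail equivalence above to one evaluated at the $n$-dependent threshold $t_n$. I would handle this by passing to the rescaled pseudo-weights $\widetilde{w}_{i,n}=a_{n,\gamma}w_i$, for which $\sum_{i=1}^n\widetilde{w}_{i,n}^{\gamma}=1$ uniformly in $n$, and revisiting the proof of the relevant proposition to extract quantitative error terms controlled by Potter-type bounds on the slowly varying factor in $\overline{F}$. The rate conditions in Assumptions~\ref{assump:alpha_equalw} and \ref{assump:alpha_nonrandw} are precisely what is needed so that this residual remains negligible compared to $\alpha_n$.
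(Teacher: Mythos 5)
Your overall strategy matches the paper's: reduce the claim to a tail probability at an explicit threshold, and invoke Proposition \ref{proposition:infinite_n_gao} (resp.\ \ref{proposition:infinite_n_chen}) with suitably defined pseudo-weights. You also correctly flag the real difficulty --- the threshold is $n$-dependent, so the fixed-$n$ tail equivalence is not enough. But your resolution of that difficulty does not work as stated. The rescaled pseudo-weights $\widetilde{w}_{i,n}=a_{n,\gamma}w_i$, normalized so that $\sum_{i=1}^n\widetilde{w}_{i,n}^{\gamma}=1$, do \emph{not} satisfy Assumption \ref{assump:wi_ninfty_gao}: with equal weights, $\widetilde{w}_{i,n}=n^{-1/\gamma}$ and $\sum_{i=1}^n\widetilde{w}_{i,n}^{q}=n^{1-q/\gamma}\to\infty$ for every $q<\min\{1,\gamma\}$, so the summability hypothesis of Proposition \ref{proposition:infinite_n_gao} fails, and no Potter-type bound on the slowly varying factor can repair a divergent weight sum. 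Likewise, zero-padding $w_i\mathbb{I}_{\{i\leq n\}}$ only reproduces the fixed-$n$ statement you already have (essentially Theorem \ref{thm:inS}); it contributes nothing toward uniformity in $n$.

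The paper closes the gap differently, and this is the step you are missing. Writing $\alpha_n=C\epsilon_n n^{-\tau}$ with $\epsilon_n\to0$, it folds the polynomial decay of $\alpha_n$ into the pseudo-weights, $\widetilde{w}_i=w_i\{n^{\tau/\gamma}(\sum_j w_j^{\gamma})^{1/\gamma}\}^{-1}$, so the event becomes $\{\sum_i\widetilde{w}_iX_i>\overline{F}^{-1}(C\epsilon_n)\}$, whose threshold diverges through $\epsilon_n\to0$ alone. The extra factor $n^{-\tau/\gamma}$ is exactly what makes $\sum_{i=1}^n\widetilde{w}_i^{q_1}$ uniformly bounded in $n$ (via the H\"older bounds of Lemma \ref{lemma:wigamma}), and the requirement $\tau>|1-\gamma|$ under (w-ii), or $\tau>(\gamma-1)\mathbb{I}_{\{\gamma>1\}}$ under (w-i), that falls out of this computation is precisely Assumption \ref{assump:alpha_nonrandw}, resp.\ \ref{assump:alpha_equalw}. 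So the rate conditions are not primarily about forcing $t_n\to\infty$, as you suggest; they are what makes the pseudo-weight sequence admissible for Proposition \ref{proposition:infinite_n_gao}, after which the identity $\sum_{i=1}^n\widetilde{w}_i^{\gamma}=n^{-\tau}=(C\epsilon_n)^{-1}\alpha_n$ delivers the asymptotic size $\alpha_n$. Without that computation your argument has no mechanism for deriving, or even using, Assumptions \ref{assump:alpha_equalw}--\ref{assump:alpha_nonrandw}, and the conclusion does not follow.
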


With random weights as in Assumption \ref{assump:w_random}, one needs an even stronger assumption on the significance level for $\gamma>1$.
\begin{assumpE}\label{assump:alpha_randw}
    The significance level $\alpha_n$ converges to zero as the number $n$ of tests increases such that 
  $\alpha_n=o(n^{-(1-\gamma)})$ when $0<\gamma\leq1$ and   $\alpha_n=o(n^{-2(\gamma-1)})$ when $\gamma>1$.
\end{assumpE}

The following theorem is mainly due to Proposition \ref{proposition:infinite_n_chen}. We can also allow random weights but need to assume nonnegative calibrated $p$-values.

\begin{theorem}\label{thm:ninfty_chen}
Let $F\in\mathcal{RV}_{-\gamma}$ with some $\gamma>0$. Consider the calibrated $p$-values $\{X_i\}_{i=1}^n$ satisfies Assumption \ref{assmp:UTAI} for each $n$ and are nonnegative. Assume that the weight $\{w_i\}_{i=1}^n$ satisfies Assumption \ref{assump:w_random} for each $n$.
If the significance level $\alpha_n$ satisfies Assumption \ref{assump:alpha_randw}, the sum-combination statistic $T_{1,F,w}(\bm{U}_n)$ 
 defined in (\ref{Tn}) is an asymptotically tail-precise combining function for $\mathcal{U}_{\ref{assmp:UTAI}}^\infty$.
\end{theorem}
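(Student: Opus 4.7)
The plan is to show, with $t_n:=a_{n,\gamma}^{-1}\overline{F}^{-1}(\alpha_n)$, that $\bm{P}(S_{1,F,w}(\bm{U}_n)>t_n)\sim\alpha_n$ as $n\to\infty$, uniformly over $\bm{U}_n\in\mathcal{U}^n_{\ref{assmp:UTAI}}$. By the definition of $a_{n,\gamma}$ in (\ref{a_n}) and the choice of $t_n$, the target rewrites as
$$\bm{P}(S_{1,F,w}(\bm{U}_n)>t_n)\sim\left(\sum_{i=1}^n\bm{E}w_i^\gamma\right)\overline{F}(t_n),$$
which is the triangular-array analogue of the tail asymptotic that Proposition \ref{proposition:infinite_n_chen} delivers for a fixed infinite sequence of random weights. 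The real work is to upgrade the pointwise-in-$n$ asymptotic (available from Theorem \ref{thm:chen} for each fixed $n$) to a joint asymptotic in which $n\to\infty$ and $t=t_n\to\infty$ simultaneously.

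I would prove this by a sandwich, following the standard single-big-jump calculus. For the lower bound, nonnegativity of the $X_i$ together with inclusion--exclusion gives
$$\bm{P}(S_{1,F,w}(\bm{U}_n)>t_n)\geq \bm{P}\left(\bigvee_{i=1}^n w_iX_i>t_n\right)\geq \sum_{i=1}^n\bm{P}(w_iX_i>t_n)-\sum_{1\leq i<j\leq n}\bm{P}(w_iX_i>t_n,\,w_jX_j>t_n).$$
Breiman's theorem, recalled as (\ref{eq:regvarying_randomw}), turns the first sum into $\left(\sum_{i=1}^n\bm{E}w_i^\gamma\right)\overline{F}(t_n)(1+o(1))=\alpha_n(1+o(1))$, so the task reduces to showing the pairwise sum is $o(\alpha_n)$. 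For the upper bound, I fix $\delta\in(0,1)$, decompose $\{S_{1,F,w}(\bm{U}_n)>t_n\}$ according to whether some $w_iX_i$ exceeds $(1-\delta)t_n$, apply Breiman again on the single-big-term contribution to produce $(1-\delta)^{-\gamma}\alpha_n(1+o(1))$, and absorb the complement using Assumption \ref{assmp:UTAI} together with $X_i\geq 0$; letting $\delta\downarrow 0$ then closes the sandwich.

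The main obstacle is uniform control of the $\binom{n}{2}$ pairwise tail probabilities as $n\to\infty$. Pointwise in the indices, Assumption \ref{assmp:UTAI} yields $\bm{P}(w_iX_i>t_n,\,w_jX_j>t_n)=o(\bm{P}(w_iX_i>t_n))$ as $t_n\to\infty$, but the double sum carries an $O(n^2)$ factor that can swamp the diagonal quantity $\alpha_n$ unless $t_n$ diverges fast enough. This is exactly where Assumption \ref{assump:alpha_randw} enters: the rates $\alpha_n=o(n^{-(1-\gamma)})$ for $0<\gamma\leq1$ and $\alpha_n=o(n^{-2(\gamma-1)})$ for $\gamma>1$ force $\overline{F}^{-1}(\alpha_n)$, and hence $t_n$, to diverge fast enough that a uniform modulus of UTAI decay overwhelms the $n^2$ bookkeeping uniformly in $\bm{U}_n\in\mathcal{U}^n_{\ref{assmp:UTAI}}$. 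The stiffer exponent $2(\gamma-1)$ when $\gamma>1$ reflects the fact that each cross term $\bm{P}(w_iX_i>t_n,\,w_jX_j>t_n)$ inherits an additional random-weight moment $\bm{E}w_j^\gamma$ through Breiman relative to the diagonal terms, so the loss must be doubly paid. With this rate condition in place, assembling the sandwich produces $\bm{P}(S_{1,F,w}(\bm{U}_n)>t_n)=\alpha_n(1+o(1))$ in which the $o(1)$ depends only on the UTAI modulus rather than on the specific joint law $\bm{U}_n$, which is precisely the uniform bound needed to conclude that $T_{1,F,w}(\bm{U}_n)$ is asymptotically tail-precise for $\mathcal{U}^\infty_{\ref{assmp:UTAI}}$.
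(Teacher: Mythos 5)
Your proposal takes a genuinely different route from the paper, and the route has a gap at exactly the point you identify as "the main obstacle." The paper does not run the single-big-jump sandwich directly on the triangular array. Instead it sets $\alpha_n=C\epsilon_n n^{-\tau}$, rescales the weights into pseudo-weights $\widetilde{w}_i=w_i\,n^{-\tau/\gamma}\bigl(\sum_{j}\bm{E}w_j^\gamma\bigr)^{-1/\gamma}$ so that the event $\{S_{1,F,w}(\bm{U}_n)>t_{\alpha,n}\}$ becomes $\{\sum_i\widetilde{w}_iX_i>\overline{F}^{-1}(C\epsilon_n)\}$, and then verifies via the H\"older/Jensen bounds of Lemmas \ref{lemma:wi_jensen} and \ref{lemma:wigamma_random} that $\{\widetilde{w}_i\}$ satisfies the summability condition of Assumption \ref{assump:wi_ninfty_chen}. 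That verification is where Assumption \ref{assump:alpha_randw} enters: the requirement $\tau\geq 2\gamma-\gamma/q_2-1$ with $q_2>\gamma$ (for $\gamma\geq1$) is what produces the exponent $2(\gamma-1)$, i.e.\ it comes from needing moments of order $q_2>\gamma$ of the random pseudo-weights to be summable, not from any cross-term bookkeeping. The single-big-jump analysis itself, including all pairwise-dependence control, is then delegated wholesale to Proposition \ref{proposition:infinite_n_chen} (Theorem 3.3 of Chen et al.\ 2009) for a fixed infinite weighted series.

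The concrete gap in your argument is the claim that Assumption \ref{assump:alpha_randw} forces $t_n$ to diverge fast enough that "a uniform modulus of UTAI decay overwhelms the $n^2$ bookkeeping uniformly in $\bm{U}_n\in\mathcal{U}^n_{\ref{assmp:UTAI}}$." Assumption \ref{assmp:UTAI} is a pairwise limit statement with no rate: for each joint law it gives $\bm{P}(X_i>x\mid X_j>x)\to0$, but it supplies no modulus that is uniform over the class $\mathcal{U}^n_{\ref{assmp:UTAI}}$, and for each $n$ that class contains laws whose UTAI decay at level $t_n$ is arbitrarily slow. Consequently no choice of $\alpha_n\downarrow0$ can, by itself, make $\sum_{i<j}\bm{P}(w_iX_i>t_n,\,w_jX_j>t_n)=o(\alpha_n)$ uniformly over the class; your sandwich cannot close without either importing the machinery of the cited infinite-series result (as the paper does) or adding a quantitative dependence assumption. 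The same issue affects your upper bound, where "absorb the complement using Assumption \ref{assmp:UTAI}" glosses over the event that $n$ individually small terms sum past $t_n$ --- controlling that for a growing number of summands is the hard part of Chen et al.'s proof and is not a consequence of UTAI alone. Finally, your heuristic that the exponent $2(\gamma-1)$ reflects a "doubly paid" Breiman moment on cross terms does not match its actual origin in the paper, which is the bound $\sum_i(\bm{E}\widetilde{w}_i^{q_2})^{1/q_2}\leq n^{2-1/q_2-\tau/\gamma-1/\gamma}$ required by the random-weight summability condition.
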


If the calibrated $p$-values are not necessarily nonnegative, we need even stronger assumptions for the dependence structure to ensure the validity of the sum- as well as the cumsum-combination statistics. The following theorem is mainly based on Proposition \ref{proposition:infinite_n_geng}.

\begin{theorem}\label{thm:ninfty_geng}
Let $F\in\mathcal{RV}_{-\gamma}$ with some $\gamma>0$ satisfies Assumption \ref{assump:leftail}.
 Assume that, for each $n$, the calibrated $p$-value $\{X_i\}_{i=1}^n$ satisfies Assumption \ref{assmp:kotang} and the weight $\{w_i\}_{i=1}^n$ satisfies Assumption \ref{assump:w_random}.
If the significance level $\alpha_n$ satisfies Assumption \ref{assump:alpha_randw}, the sum- and cumsum-combination statistics $T_{1,F,w}(\bm{U}_n)$  and $T_{2,F,w}(\bm{U}_n)$,
 defined in (\ref{Tn}), are asymptotically tail-precise combining functions for $\mathcal{U}_{\ref{assmp:kotang}}^\infty$.
\end{theorem}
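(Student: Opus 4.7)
The plan is to reduce the claim to a direct application of Proposition \ref{proposition:infinite_n_geng} combined with the regular-variation scaling identity (\ref{eq:regvarying_randomw}). First I would unwrap the definition $T_{j,F,w}(\bm{U}_n) = \overline{F}(a_{n,\gamma} S_{j,F,w}(\bm{U}_n))$ and use monotonicity of $\overline{F}$ to rewrite the target statement as
\begin{equation*}
    \bm{P}\left(S_{j,F,w}(\bm{U}_n) \geq \tau_n\right) \sim \alpha_n \qquad (j=1,2), \quad \text{with } \tau_n := \overline{F}^{-1}(\alpha_n)/a_{n,\gamma}.
\end{equation*}
For each fixed $n$ I would pad the weight sequence with zeros beyond index $n$ so that Assumption \ref{assump:wi_ninfty_yi} is trivially met, and apply Proposition \ref{proposition:infinite_n_geng}, which handles the sum and the running-max simultaneously, to obtain
\begin{equation*}
    \bm{P}\left(S_{j,F,w}(\bm{U}_n) > t\right) \sim \left(\sum_{i=1}^n \bm{E} w_i^{\gamma}\right)\overline{F}(t) \quad \text{as } t \to \infty,\ j=1,2.
\end{equation*}

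The second step is the algebraic reduction. By the definition of $a_{n,\gamma}$ in (\ref{a_n}) one has $a_{n,\gamma}^{\gamma}\sum_{i=1}^n \bm{E} w_i^{\gamma} = 1$, and since $F \in \mathcal{RV}_{-\gamma}$, the scaling $\overline{F}(\tau_n) = \overline{F}(\overline{F}^{-1}(\alpha_n)/a_{n,\gamma}) \sim a_{n,\gamma}^{\gamma}\alpha_n$ holds. Evaluating the previous display at $t=\tau_n$ therefore collapses to $(\sum_{i=1}^n \bm{E} w_i^{\gamma}) \cdot a_{n,\gamma}^{\gamma}\alpha_n = \alpha_n$, which is the claim for each fixed $n$ as $\alpha_n \downarrow 0$. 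The final step is to take the supremum over $\bm{U}_n \in \mathcal{U}_{\ref{assmp:kotang}}^n$; I would appeal to the fact that the constants in the proof of Proposition \ref{proposition:infinite_n_geng} depend only on the quantities bounded by the parameters of Assumption \ref{assmp:kotang}, so the equivalence holds uniformly within the class.

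The main obstacle is upgrading this per-$n$ equivalence to the genuinely doubly asymptotic form demanded by \emph{asymptotic} tail-precision. Because $a_{n,\gamma}$ itself may diverge or vanish (for equal weights $a_{n,\gamma} \asymp n^{(\gamma-1)/\gamma}$), the rescaling argument cannot simply invoke Karamata's uniform convergence theorem on a compact range of scales. This is precisely what Assumption \ref{assump:alpha_randw} is engineered to handle: the rates $\alpha_n = o(n^{-(1-\gamma)})$ for $\gamma \leq 1$ and $\alpha_n = o(n^{-2(\gamma-1)})$ for $\gamma > 1$ guarantee that $\tau_n$ grows fast enough for both the error term implicit in Proposition \ref{proposition:infinite_n_geng} (inherited from the $(\bm{E}\widetilde{w}_i^{\gamma-\delta})^{1/(\gamma+\delta)}$ summability in Assumption \ref{assump:wi_ninfty_yi}) and the error in the regular-variation scaling $\overline{F}(\tau_n)/(a_{n,\gamma}^{\gamma}\alpha_n) \to 1$ to vanish uniformly in $n$. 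The bookkeeping closely mirrors the proof of Theorem \ref{thm:ninfty_chen}; I expect the $\gamma>1$ branch to be the most delicate, since the asymmetric $(\gamma-\delta)/(\gamma+\delta)$ exponents in Assumption \ref{assump:wi_ninfty_yi} are what produce the extra factor of two in the $n^{-2(\gamma-1)}$ rate, and there I would lean on Potter-type bounds to control the uniform error of regular variation at the moving threshold $\tau_n$.
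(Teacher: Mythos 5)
You have assembled the right ingredients (Proposition \ref{proposition:infinite_n_geng}, the scaling identity for $\mathcal{RV}_{-\gamma}$, the role of Assumption \ref{assump:alpha_randw}), and you correctly locate the difficulty in passing from a per-$n$ equivalence to the doubly asymptotic statement. But the step you propose as the entry point --- padding the weights with zeros so that Assumption \ref{assump:wi_ninfty_yi} is ``trivially met'' and applying Proposition \ref{proposition:infinite_n_geng} for each fixed $n$ --- is exactly the naive use the paper warns against just before introducing Assumptions \ref{assump:wi_ninfty_gao}--\ref{assump:wi_ninfty_yi}. With zero-padding, the proposition only returns the fixed-$n$, $t\to\infty$ tail equivalence, i.e.\ nothing beyond Theorem \ref{thm:inS}(ii); the rate at which the ``$\sim$'' kicks in then depends on $n$, and no uniformity over $n$ (or over $\mathcal{U}_{\ref{assmp:kotang}}^n$) is obtained. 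You acknowledge this obstacle but resolve it only by assertion: ``the error term implicit in Proposition \ref{proposition:infinite_n_geng}'' is never quantified (the proposition carries no explicit error rate), and ``Potter-type bounds'' are invoked without being applied.

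The actual mechanism in the paper is different and is the entire content of the proof: one writes $\alpha_n=C\epsilon_n n^{-\tau}$ and absorbs all of the $n$-dependence of the threshold into rescaled pseudo-weights $\widetilde{w}_i=w_i\,/\,\bigl\{n^{\tau/\gamma}(\sum_j \bm{E}w_j^\gamma)^{1/\gamma}\bigr\}$, so that the event $\{S_{1,F,w}(\bm{U}_n)>t_{\alpha,n}\}$ becomes $\{\sum_i\widetilde{w}_iX_i>\overline{F}^{-1}(C\epsilon_n)\}$ with a threshold diverging purely because $\epsilon_n\downarrow0$. Proposition \ref{proposition:infinite_n_geng} is then applied once to this rescaled array, and the work consists of verifying that the $\widetilde{w}_i$ satisfy Assumption \ref{assump:wi_ninfty_yi} \emph{uniformly in $n$}: for $\gamma<1$ one bounds $\sum_i\bm{E}\widetilde{w}_i^{\gamma-\delta}$ via Lemma \ref{lemma:wigamma_random}, and for $\gamma\geq1$ one bounds $\sum_i(\bm{E}\widetilde{w}_i^{\gamma-\delta})^{1/(\gamma+\delta)}$ via H\"older's inequality; the requirement that these bounds be $O(1)$ in $n$ is precisely what forces $\tau>1-\gamma$ (resp.\ $\tau>2(\gamma-1)$), i.e.\ Assumption \ref{assump:alpha_randw}. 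So the rate restriction enters through weight summability, not through an error bound on the regular-variation scaling at a moving threshold as your sketch suggests. Your observation that the asymmetric $(\gamma-\delta)/(\gamma+\delta)$ exponents produce the factor of two is correct, but without the pseudo-weight construction and the H\"older computation the proof is not there.
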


\begin{remark}{\rm
\cite{liu2020cauchy} proved a similar result for CCT. One of their conditions, (C.2), controls the maximum eigenvalue of the correlation matrix $\Sigma$ of $\bm{Y_n}$: $\lambda_{\max}(\Sigma)\leq C_0$ for a positive constant $C_0$, where $\lambda_{\max}(A)$ indicate the maximum eigenvalue of a matrix $A$. We note that this condition may be stronger than ours. For instance, if $\Sigma$ takes an equicorrelation structure, $\Sigma=(1-\rho)I_n+ \rho J_n$, where $I_n$ is the $n$ by $n$ identity matrix and $J_n$ is the $n$ by $n$ matrix of ones, the maximum eigenvalue is $(n-1)\rho+1$, which diverges to $\infty$ as $n\to\infty$. Therefore, (C.2) of \cite{liu2020cauchy} requires $\rho=O(1/n)$ or slower in the equicorrelation case. On the contrary, our Assumption \ref{assmp:UTAI} governs only the bivariate relationship, even in the diverging $n$ case.
}\end{remark}

We finish this section with a brief discussion on the max-combination test with a diverging number of tests.
When $X_i$ are iid, the approximate tail probability of the max-combination statistic under the null is well-known in the extreme value literature. A regularly varying distribution $F$ falls into the maximum domain of attraction of Fr\'echet distribution. That is, if the distribution function $F$ of $X_i$ is regularly varying with index $\gamma$,
\begin{equation}\label{eq:max_ninfty}
\lim_{n\to\infty}\bm{P}\left(c_n^{-1} \bigvee_{i=1}^n X_i<t\right)=\lim_{n\to\infty} \{F(c_nt)\}^n=\exp(-t^{-\gamma})
\end{equation}
for all real $t>0$, where $c_n=\overline{F}^{-1}(n^{-1})$. In fact, there is an if-and-only-if relationship between $F\in\mathcal{RV}_{-\gamma}$ and equation (\ref{eq:max_ninfty}). See, for instance, Proposition 1.11 of  \cite{resnick2008extreme}, which was first discovered by \cite{gnedenko1943distribution}. 
The following theorem uses (\ref{eq:max_ninfty}) to prove that max-combination statistics with regularly varying calibrators and with equal weights control the size well in the iid case. Due to the equivalence of the max-combination statistic and Bonferroni's method, this theorem can also be understood to prove the validity of Bonferroni's method with infinitely many tests as long as these tests are independent. 

\begin{theorem}\label{thm:max_iid_gnedenko}
    Let $F\in\mathcal{RV}_{-\gamma}$ with index $\gamma>0$. Let the sequence of $p$-values $U_1,\ldots,U_n$ be generated under the null and be independent. Assume equal weights, or Assumption \ref{assump:w_equal}. For a small enough significance level $\alpha>0$, 
\begin{equation}\label{thm:eq:max_ninfty}\bm{P}(S_{3,F,\ref{assump:w_equal}}(\bm{U}_n)>t_{\alpha,n})\sim \alpha~~~~{\rm as}~n\to\infty,\end{equation}
where $t_{\alpha,n}=\frac{1}{n}\overline{F}^{-1}(\alpha/n)$.
\end{theorem}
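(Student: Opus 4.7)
The plan is to exploit the Fréchet limit (\ref{eq:max_ninfty}) exactly as the surrounding discussion signals. First I would rewrite the event using Remark \ref{remark:maxcomstat}: since $S_{3,F,\ref{assump:w_equal}}(\bm{U}_n)=\frac{1}{n}\bigvee_{i=1}^n X_i$ with $X_i=\overline{F}^{-1}(U_i)$, and $t_{\alpha,n}=\frac{1}{n}\overline{F}^{-1}(\alpha/n)$, the cancellation of the $1/n$ factor gives
\begin{equation*}
\bm{P}\!\left(S_{3,F,\ref{assump:w_equal}}(\bm{U}_n)>t_{\alpha,n}\right)=\bm{P}\!\left(\bigvee_{i=1}^n X_i>\overline{F}^{-1}(\alpha/n)\right).
\end{equation*}

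Next I would rescale by $c_n=\overline{F}^{-1}(1/n)$ so that (\ref{eq:max_ninfty}) applies. Since $F\in\mathcal{RV}_{-\gamma}$, the inverse survival function $\overline{F}^{-1}$ is regularly varying at $0$ with index $-1/\gamma$, so for any fixed $\alpha>0$,
\begin{equation*}
\frac{\overline{F}^{-1}(\alpha/n)}{c_n}\longrightarrow \alpha^{-1/\gamma}\qquad\text{as }n\to\infty.
\end{equation*}
Therefore the probability above equals $1-\bm{P}(c_n^{-1}\bigvee_{i=1}^n X_i\le c_n^{-1}\overline{F}^{-1}(\alpha/n))$, and the threshold on the right converges to $\alpha^{-1/\gamma}$.

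Applying (\ref{eq:max_ninfty}) together with continuity of the Fréchet distribution function $t\mapsto\exp(-t^{-\gamma})$ at $t=\alpha^{-1/\gamma}$, I can pass to the limit to get
\begin{equation*}
\lim_{n\to\infty}\bm{P}\!\left(S_{3,F,\ref{assump:w_equal}}(\bm{U}_n)>t_{\alpha,n}\right)=1-\exp\!\left(-(\alpha^{-1/\gamma})^{-\gamma}\right)=1-e^{-\alpha}.
\end{equation*}
Finally, the elementary Taylor expansion $1-e^{-\alpha}=\alpha+O(\alpha^2)$ as $\alpha\downarrow 0$ yields the claimed tail-precise asymptotic $\sim\alpha$ for small enough $\alpha$.

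The main obstacle is the standard one when converting a weak-convergence statement into a statement with an $n$-dependent threshold: equation (\ref{eq:max_ninfty}) gives pointwise convergence of the CDF of $c_n^{-1}\bigvee X_i$, but I need to evaluate it at the sequence $c_n^{-1}\overline{F}^{-1}(\alpha/n)$ rather than a fixed $t$. This is handled by combining continuity of the Fréchet CDF with a standard sandwich: for any $\epsilon>0$, eventually $c_n^{-1}\overline{F}^{-1}(\alpha/n)\in[\alpha^{-1/\gamma}-\epsilon,\alpha^{-1/\gamma}+\epsilon]$, and the Fréchet CDF at the endpoints can be made arbitrarily close to $e^{-\alpha}$. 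A secondary subtlety is interpreting ``$\sim\alpha$ as $n\to\infty$" in the theorem: the cleanest reading is that for each fixed $\alpha$ the probability converges to $1-e^{-\alpha}$, which for small $\alpha$ is asymptotic to $\alpha$; there is no need to couple $\alpha$ with $n$ in this iid setting.
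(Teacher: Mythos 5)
Your proposal is correct and follows essentially the same route as the paper: both identify the threshold $\overline{F}^{-1}(\alpha/n)$ as asymptotically $\alpha^{-1/\gamma}c_n$ with $c_n=\overline{F}^{-1}(1/n)$, invoke the Fr\'echet limit (\ref{eq:max_ninfty}), and conclude via $1-e^{-\alpha}\sim\alpha$. If anything, your version is slightly tidier, since you keep the first step as an exact identity and explicitly justify evaluating the weak limit at an $n$-dependent threshold via continuity of the Fr\'echet CDF, a point the paper passes over.
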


Indeed, the tail equivalence (\ref{eq:indep_tailprob}) holds for $S_{3,F,w}(\bm{U}_n)$, regardless of the choice of the calibrator, with infinitely growing number $n$ of tests, as long as these tests are independent. The nonequal weights that are possibly random are also allowed.
\begin{theorem}\label{thm:ninfty_max}
Let $F$ be any distribution function. Let the sequence of $p$-values $U_1,\ldots,U_n$ be generated under the null and be independent. Assume nonrandom or random weights, or Assumption \ref{assump:w_random}.
The tail equivalence works for the max-combination test for any $n$,
\begin{equation}\label{eq:thm:max_n}\bm{P}(S_{3,F,w}(\bm{U}_n)>t)\sim \sum_{i=1}^n\bm{P}(w_iX_i>t)~~~~{\rm as}~t\to\infty,\end{equation} 
implying that (\ref{eq:thm_ninfty}) holds for $j=3$ under independence.
In other words, the max-combination statistic $T_{3,F,w}(\bm{U}_n)$ with any calibrator and weights is a tail-precise combining function for $\mathcal{U}_{indep}^n$ for any $n$, and therefore, is an asymptotically tail-precise combining function for $\mathcal{U}_{indep}^\infty$. Here, $$\mathcal{U}_{indep}^n=\{\bm{U}_n\in\mathcal{U}^n:\bm{U}_n =(U_1,\ldots,U_n),~U_i\stackrel{iid}{\sim}Uniform[0,1]\}.$$
\end{theorem}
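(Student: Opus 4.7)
The plan is to exploit the fact that for the maximum, independence of the $\{U_i\}$ permits a very clean inclusion--exclusion computation, so no heavy-tail machinery is needed and the form of the calibrator $F$ plays essentially no role beyond its being a distribution function.

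First I would condition on $\bm{w}_n$. Because $\{w_i\}$ is independent of $\{X_i\}$ by Assumption \ref{assump:w_random} and $\{U_i\}$ (hence $\{X_i\}$) are independent under the global null, conditional on $\bm{w}_n$ the variables $w_iX_i$ are independent, which yields
\begin{equation*}
\bm{P}\!\left(\bigvee_{i=1}^n w_iX_i>t\right)=\bm{E}\!\left[1-\prod_{i=1}^n\bigl(1-\overline{F}(t/w_i)\bigr)\right].
\end{equation*}
I would then invoke the elementary two-sided bound, valid for any $p_1,\ldots,p_n\in[0,1]$,
\begin{equation*}
\sum_{i=1}^n p_i-\tfrac12\Bigl(\sum_{i=1}^n p_i\Bigr)^{2}\;\le\;1-\prod_{i=1}^n(1-p_i)\;\le\;\sum_{i=1}^n p_i,
\end{equation*}
in which the upper half is Bonferroni and the lower half follows from $\prod(1-p_i)\le\exp(-\sum p_i)$ combined with $1-e^{-x}\ge x-x^2/2$ for $x\ge 0$. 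Applying this with $p_i=\overline{F}(t/w_i)$ and taking expectation sandwiches $\bm{P}(S_{3,F,w}(\bm{U}_n)>t)$ between $P_n(t)-\tfrac12\bm{E}\bigl[(\sum_i\overline{F}(t/w_i))^2\bigr]$ and $P_n(t)$, where $P_n(t):=\sum_{i=1}^n\bm{P}(w_iX_i>t)$.

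The core estimate is that the remainder is negligible relative to $P_n(t)$. Because Assumption \ref{assump:w_random} forces $w_i<1$ almost surely, we have $t/w_i>t$ and hence $\overline{F}(t/w_i)\le\overline{F}(t)$, so bounding one factor in each cross term gives
\begin{equation*}
\Bigl(\sum_{i=1}^n\overline{F}(t/w_i)\Bigr)^{2}=\sum_{i,j}\overline{F}(t/w_i)\overline{F}(t/w_j)\;\le\; n\,\overline{F}(t)\sum_{i=1}^n\overline{F}(t/w_i).
\end{equation*}
Therefore the remainder is at most $\tfrac{n}{2}\overline{F}(t)P_n(t)=o(P_n(t))$ as $t\to\infty$, because $\overline{F}(t)\to 0$ for any distribution function. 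This establishes (\ref{eq:thm:max_n}), and (\ref{eq:thm_ninfty}) for $j=3$ follows upon passing $n\to\infty$ in the same identity, using that $\sum_i \bm{P}(w_iX_i>t)$ is finite for $t$ large enough under Assumption \ref{assump:w_random}.

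For the combining function statement, I would define $t_{\epsilon,n}$ by $P_n(t_{\epsilon,n})=\epsilon$; since $t_{\epsilon,n}\to\infty$ as $\epsilon\downarrow 0$, the equivalence just proved gives $\bm{P}(T_{3,F,w}(\bm{U}_n)\le\epsilon)\sim\epsilon$, which is tail-precise for $\mathcal{U}_{indep}^n$ (the supremum is trivial because independence fixes the joint law of $\bm{U}_n$). The asymptotically tail-precise statement for $\mathcal{U}_{indep}^\infty$ then follows by letting $\epsilon=\epsilon_n\downarrow 0$ at a rate for which the explicit relative error $\tfrac{n}{2}\overline{F}(t_{\epsilon_n,n})$ remains $o(1)$. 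I do not anticipate a genuine obstacle: the only things to keep straight are the conditioning in the first step (weights may be mutually dependent but must be detached from $\{X_i\}$) and the observation that the $o(P_n(t))$ bound is driven solely by $\overline{F}(t)\to 0$, so no structural assumption on $F$ is required.
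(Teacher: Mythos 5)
Your proposal is correct and follows essentially the same route as the paper: both exploit the exact product formula for the maximum of independent variables and then show that the first-order (union-bound) term dominates as $t\to\infty$. The only differences are refinements on your part — you condition on the weights before applying the product formula (a step the paper glosses over for random $w_i$), and you replace the paper's chain of $\log(1-x)\sim -x$ and $1-e^{-x}\sim x$ approximations with an explicit two-sided Bonferroni sandwich, which has the side benefit of producing the quantitative relative error $\tfrac{n}{2}\overline{F}(t)$ that you then use to justify the passage to $\mathcal{U}_{indep}^\infty$.
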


Theorem \ref{thm:ninfty_max} confirms that, in the independent case, a Bonferroni-like method can still control the size well even for infinitely growing $n$, as long as the significance level $\alpha$ reduces to 0 as $n\to\infty$.  
However, based on our simulation presented in Tables \ref{table:size} and \ref{table:rawpower}, this Bonferroni-like approach, the max-combination statistic, starts to fail when tests are dependent. As well-known in this area, Bonferroni-like approaches lose a significant amount of power when tests are strongly dependent, and therefore, not recommended in practice when strong dependency is suspected.

Now we are ready to discuss the choice of $\gamma$ for regularly varying calibrators.

\section{The choice of $\gamma$ and $\alpha$ from the size control perspective}\label{sec:gammachoice}

The validity of the combination statistics has been addressed only in an asymptotic sense with ``small enough" $\alpha$. But how small $\alpha$ should be in order to satisfy the tail approximations? For a given $\alpha$, what which value of $\gamma$ would be reasonable if we consider regularly varying calibrators? This section aims to address such questions from a theoretical standpoint.

At the end of Section \ref{sec:regvar_Mmergfun}, we briefly discussed that our $T_{1, F,w}(\bm{U}_n)=\overline{F}(a_{n,\gamma}S_{1,F,w}(\bm{U}_n))$ with  Pareto calibrator with index 1 and with equal weights is equivalent to \cite{wilson2019harmonic}'s HMP without the Landau adjustment. This adjustment is based on the Generalized Central Limit Theorem (GCLT) \citep{uchaikin2011chance} in an iid case. It is well-known in the literature that the Pareto calibrators with $\gamma>1$ are not as optimal. We also found the over-rejecting behavior when $\gamma>1$ in Section \ref{sec:subexponential} and in the $n\to\infty$ case in Section \ref{sec:ninft}, as seen from Assumption \ref{assump:alpha_equalw}. Therefore, we focus only on the $\gamma\in(0,1]$ for the rest of this section.

For a Pareto calibrator with index $\gamma>0$, $\overline{F}^{-1}(p)=p^{-\frac{1}{\gamma}}$ and with equal weights, our sum-combination statistic is $S_{1,F,w}(\bm{U}_n)=n^{-1}\sum_{i=1}^nU_i^{-\frac{1}{\gamma}}$. Following \cite{wilson2020generalized}'s argument, the GCLT states that if $U_i$ are iid, as $n\to\infty$,
\begin{equation}\label{eq:gcltgamma1}\frac{nS_{1,F_1,\ref{assump:w_equal}}(\bm{U}_n)-n\log n}{\pi n/2}\stackrel{d}{\longrightarrow} S(1,1,1,0;0),~~~~{\rm if}~\gamma=1,\end{equation}
and 
\begin{equation}\label{eq:gcltgamma2}\frac{nS_{1,F_\gamma,\ref{assump:w_equal}}(\bm{U}_n)}{n^{\frac{1}{\gamma}}\left\{\frac{2}{\pi}\Gamma(\gamma)\sin\left(\frac{\pi\gamma}{2}\right)\right\}^{-\frac{1}{\gamma}}}\stackrel{d}{\longrightarrow} S\left(\gamma,1,1,\tan\left(\frac{\pi\gamma}{2}\right);0\right),~~~~{\rm if}~\gamma\in(0,1).\end{equation}
Here, $S(\gamma,\beta,\sigma,\mu;0)$ indicates a stable distribution, following \cite{nolan2020univariate}'s S0 parametrization,  with stability index $\gamma$, skewness $\beta$, scale $\sigma$, and location $\mu$.

The following theorem states the approximate tail probability of the sum-combining statistic when individual tests are iid.

\begin{theorem}\label{thm:gclt}
Let $F\in\mathcal{RV}_{-\gamma}$ with some $\gamma\in(0,1]$ and the  calibrated $p$-value $\{X_i\}_{i=1}^n$ be iid. Assume the equal weights Assumption \ref{assump:w_nonrandom} and that the significance level $\alpha_n$ converges to zero at the following rates: $\alpha_n=o(1/\log n)$ if $\gamma=1$ and $\alpha_n=o(1)$ if $\gamma\in(0,1)$. Then the tail probability of the sum-combining function can be approximated as
$$\bm{P}\left(T_{1,F_\gamma,\ref{assump:w_equal}}(\bm{U}_n)<\alpha_n\right)\sim\left\{\begin{array}{ll}(\alpha_n^{-1}-\log n)^{-1}\sim \alpha_n,&\gamma=1
\\ \alpha_n,&\gamma\in(0,1),\end{array}\right.$$
as $n\to\infty$.
\end{theorem}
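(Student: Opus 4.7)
My plan is to reduce the probability to the tail of an iid Pareto sum and then invoke a \emph{precise large deviation} (PLD) result of Heyde--Nagaev type, which is the analytic content behind the GCLT statements (\ref{eq:gcltgamma1})--(\ref{eq:gcltgamma2}) but strong enough to control thresholds that tend to infinity with $n$. First I unfold the definitions. With the Pareto calibrator $\overline{F}_\gamma(x) = x^{-\gamma}$, equal weights $w_i = 1/n$, and $a_{n,\gamma} = n^{(\gamma-1)/\gamma}$, the event $T_{1,F_\gamma,\ref{assump:w_equal}}(\bm{U}_n) < \alpha_n$ is equivalent to
\begin{equation*}
T_n := \sum_{i=1}^n U_i^{-1/\gamma} > r_n, \qquad r_n := n^{1/\gamma}\alpha_n^{-1/\gamma},
\end{equation*}
where the $X_i := U_i^{-1/\gamma}$ are iid with survival function $\overline{F}_\gamma(t) = t^{-\gamma}$ on $[1,\infty)$. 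The problem is thus purely about upper-tail large deviations for a heavy-tailed iid sum.

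For $\gamma \in (0,1)$ the mean of $X_i$ is infinite and the classical Heyde--Nagaev theorem supplies
\begin{equation*}
\sup_{t \geq b_n}\left|\frac{\bm{P}(T_n > t)}{n\overline{F}_\gamma(t)} - 1\right| \to 0
\end{equation*}
along any sequence $b_n$ with $b_n/n^{1/\gamma} \to \infty$, which is the uniform version of the single-big-jump principle already invoked earlier in the paper. Under $\alpha_n = o(1)$ we have $r_n/n^{1/\gamma} = \alpha_n^{-1/\gamma} \to \infty$, so the PLD applies with $b_n = r_n$ and yields $\bm{P}(T_n > r_n) \sim n r_n^{-\gamma} = \alpha_n$, as claimed.

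For $\gamma = 1$ the truncated mean $\mu_n := \bm{E}[X_1 \mathbb{I}_{\{X_1 \leq n\}}] = \log n$ enters as a centering, matching exactly the $n\log n$ in (\ref{eq:gcltgamma1}); the corresponding centered PLD reads $\bm{P}(T_n > t) \sim n\overline{F}_1(t - n\mu_n)$ provided $(t - n\mu_n)/n \to \infty$. At $t = r_n = n/\alpha_n$ the shifted threshold is $n(\alpha_n^{-1} - \log n)$, which dominates $n$ precisely under $\alpha_n = o(1/\log n)$, giving
\begin{equation*}
\bm{P}(T_n > r_n) \sim \frac{n}{n(\alpha_n^{-1} - \log n)} = (\alpha_n^{-1} - \log n)^{-1},
\end{equation*}
and the further simplification to $\sim \alpha_n$ follows from $\alpha_n \log n \to 0$. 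The delicate point is this borderline $\gamma = 1$ regime: the GCLT (\ref{eq:gcltgamma1}) alone does not control the right tail along a diverging sequence, so the main obstacle is quoting or reproving a sufficiently strong PLD (via truncation of the Pareto(1) summands and a Laplace-transform or moment bound on the truncated part) that transfers the tail constant uniformly as the threshold grows with $n$. Once that uniformity is in hand the arithmetic collapses; the $\gamma \in (0,1)$ half is then essentially immediate.
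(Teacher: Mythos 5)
Your reduction of the event $\{T_{1,F_\gamma,\ref{assump:w_equal}}(\bm{U}_n)<\alpha_n\}$ to $\{\sum_{i=1}^n U_i^{-1/\gamma}>n^{1/\gamma}\alpha_n^{-1/\gamma}\}$ is exactly the paper's first step, and your arithmetic at the threshold $r_n$ reproduces both branches of the claimed asymptotics correctly, including the role of $\alpha_n=o(1/\log n)$ in making $\alpha_n^{-1}-\log n\sim\alpha_n^{-1}$. Where you diverge is the analytic engine: the paper passes through the generalized CLT to a totally skewed stable limit and then invokes Nolan's Theorem 1.2 for the stable tail, whereas you bypass the stable law entirely and appeal to a precise-large-deviation (Heyde--Nagaev/Cline--Hsing type) statement $\bm{P}(T_n>t)\sim n\overline{F}_\gamma(t)$ (centered by $n\log n$ when $\gamma=1$), valid uniformly once the threshold escapes the natural scaling. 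The two routes are equivalent in substance --- the stable tail constant is precisely what makes the GCLT output match $n\overline{F}_\gamma(\cdot)$ --- but yours is more direct and, importantly, more honest about the one genuinely delicate point: since $\alpha_n\to0$, the argument of the limiting distribution diverges with $n$, so a fixed-argument weak-convergence statement does not suffice and some uniformity over growing thresholds is needed. The paper elides this; you name it but leave the $\gamma=1$ PLD as a citation to be supplied. That is acceptable provided you actually cite a result covering the infinite-mean/$\gamma=1$ regime (the classical Nagaev theorems assume finite variance; you want the regularly-varying versions with truncated-mean centering), and you should verify, as you sketch, that $r_n-n\log n$ lies in the admissible uniformity region, which your condition $\alpha_n\log n\to0$ indeed guarantees.
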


Section 1B in SI Appendix of \cite{wilson2019harmonic} provides another way to approximate the tail probability of the sum-combining function when $U_i$ are iid and $\gamma=1$ via Landau distribution approximation: \begin{equation}\label{eq:landau}\bm{P}(T_{1, F_\gamma,\ref{assump:w_equal}}(\bm{U}_n)<\alpha_n)\sim \bm{P}\left(S(1,1,\pi/2,\log n+0.874367)>\alpha_n^{-1}\right)\sim \alpha_n\end{equation}
as $\alpha_n\downarrow0$, where $S(\gamma,\beta,\sigma,\mu)$ indicates a random variable that follows $S(\gamma,\beta,\sigma,\mu;0)$. The last approximation is due to Theorem 1.2 of \cite{nolan2020univariate}.
Combining Theorem \ref{thm:gclt} and (\ref{eq:landau}), we can see that  $T_{1,F,\ref{assump:w_equal}}(\bm{U}_\infty )$ is an {\it asymptotically tail-precise combining function for} $\mathcal{U}_{indep}^\infty$, where $\mathcal{U}_{indep}^n$ as defined in Theorem \ref{thm:ninfty_max}.
This asymptotic tail-preciseness is guaranteed 
if $\gamma\in(0,1)$ and $\alpha_n=o(1)$ or if $\gamma=1$ and $1/\alpha_n\gg\log n$.
In particular, the latter condition for the $\gamma=1$ case is much stronger than that required for dependent $U_i$ as in  Assumption \ref{assump:alpha_equalw}, which reduces to  $\alpha_n=o(1)$ with  $\gamma\in(0,1]$.

Notice that (\ref{eq:landau}) suggests that there are two layers of approximations. We first investigate the second approximation, the tail probability approximation of a Landau distribution.

\begin{table}[htbp]
\centering
\begin{tabular}{|r||rrrrrr|}
  \hline
\diagbox{$n$~~~~~~~~}{$\alpha$} & 0.1 & 0.05 & 0.01 & 0.001 & 0.0001 & 0.00001 \\ 
  \hline\hline
10 & 1.6082 & 1.3211 & 1.0731 & 1.0093 & 1.0012 & 1.0001 \\
  25 & 1.8281 & 1.3997 & 1.0836 & 1.0102 & 1.0012 & 1.0001 \\ 
  50 & 2.0327 & 1.4653 & 1.0917 & 1.0109 & 1.0013 & 1.0002 \\ 
  100 & 2.2807 & 1.5371 & 1.0999 & 1.0116 & 1.0014 & 1.0002 \\ 
  1,000 & 3.6178 & 1.8323 & 1.1282 & 1.0140 & 1.0016 & 1.0002 \\ 
  10,000 & 6.4902 & 2.2563 & 1.1579 & 1.0164 & 1.0018 & 1.0002 \\ 
  100,000 & 9.8287 & 2.9077 & 1.1893 & 1.0187 & 1.0021 & 1.0002 \\ 
  1,000,000 & 10.0000 & 4.0037 & 1.2223 & 1.0211 & 1.0023 & 1.0003 \\ 
  10,000,000 & 10.0000 & 6.0846 & 1.2573 & 1.0235 & 1.0025 & 1.0003 \\ 
  100,000,000 & 10.0000 & 10.5354 & 1.2942 & 1.0260 & 1.0028 & 1.0003 \\ 
  1,000,000,000 & 10.0000 & 18.1519 & 1.3334 & 1.0284 & 1.0030 & 1.0003 \\ 
  10,000,000,000 & 10.0000 & 20.0000 & 1.3751 & 1.0308 & 1.0032 & 1.0003 \\ 
   \hline
\end{tabular}\caption{The ratio between tail probability  $\bm{P}(S(1,1,\pi/2,\log n+0.874367)>\alpha^{-1})$ and $\alpha$ for various choices of the significance level $\alpha$ and the number $n$ of tests.}\label{table:landau}
\end{table}

Table \ref{table:landau} reports the ratio  $$\frac{\bm{P}\left(S(1,1,\pi/2,\log n+0.874367)>\alpha^{-1}\right)}{\alpha}$$ with various choices of the significance level $\alpha$ and the number of tests $n$. These values are computed using the \texttt{pEstable} function of the \texttt{FMStable} package in \texttt{R}, as suggested by \cite[SI Appendix, page 3]{wilson2019harmonic}.  If this ratio is close to 1, this implies that the second approximation in (\ref{eq:landau}), the tail approximation of a Landau distribution, is accurate.  As this table suggests, the tail probability of a Landau random variable, compared $\alpha^{-1}$, is always greater than $\alpha$, which leads to over-rejection. This over-rejection becomes more serious as $n$ or $\alpha$ increases. While the severity of over-rejection varies, unless the number of observations is extremely large, $\alpha\leq0.01$ generally produces reasonably accurate approximation. On the contrary, $\alpha\geq 0.05$ is not accurate in general, and the size distortion becomes drastically severe as $n$ increases. To avoid this issue, \cite{wilson2019harmonic} suggested replacing the significance threshold with $\alpha_{n,wilson}$. Writing their suggestion in parallel to (\ref{eq:landau}) would lead to  
\begin{equation}\label{eq:landau-wilson}\bm{P}(T_{1, F_\gamma,\ref{assump:w_equal}}(\bm{U}_n)<\alpha_{n,wilson})\sim \bm{P}\left(S(1,1,\pi/2,\log n+0.874367)>\alpha_{n,wilson}^{-1}\right)\sim \alpha_n,\end{equation}
 where $\alpha_{n,wilson}$ are from Table 1 of \cite[p.1197]{wilson2019harmonic} and $\alpha_n$ is the nominal significance level. 
 This statement is accurate when $U_i$ are iid. However, when $U_i$ is not independent, the first approximation is no longer valid. In fact, we already know that 
 \begin{equation}\label{eq:T1tailprob}\bm{P}(T_{1, F_\gamma,\ref{assump:w_equal}}(\bm{U}_n)<\alpha_{n})\sim\alpha_n\end{equation}
as $\alpha_n\downarrow0$. Since $\alpha_{n,wilson}<\alpha_{n}$, the Wilson's HMP with the Landau adjustment would generally lead to under-rejection in finite samples. This under-rejection typically leads to the loss of power, as can be seen in Table \ref{table:rawpower}. In other words, when the approximation  (\ref{eq:T1tailprob}) works well, i.e., when  $\alpha$ is small enough, the sum-combination test with Pareto calibrator with index $\gamma=1$ without the Landau adjustment works better than the one with it. On the contrary, as Table \ref{table:size} suggests, when $\alpha$ is not small enough for this tail approximation (\ref{eq:T1tailprob}), Pareto calibrator with $\gamma=1$ without the Landau adjustment has severe size distortions, particularly for large $n$. The Landau adjustment or a Pareto calibrator with index $\gamma<1$ does to alleviate the severe over-rejections. 
This observation leads to the following guidance in the choice of $\gamma$. 
\begin{enumerate}
\item If the significance level $\alpha$ is small enough that $1/\alpha$ dominates $\log n$, a Pareto calibrator with $\gamma=1$ and without the Landau adjustment can be used without concerns for severe over-rejection. In general, $\alpha=0.01$ or smaller generally works okay. See Table \ref{table:landau}.
\item If $\alpha$ is not small enough, such as $\alpha=0.1$ or $\alpha=0.05$, it is advisable to choose $\gamma<1$ to avoid over-rejection while maintaining robustness against unknown dependence among the tests. Wilson's HMP with the Landau adjustment is another good option.
\end{enumerate}




\section{Finite sample sizes and powers}\label{sec:simulation}
This section compares various choices for the $p$-value combination methods in finite samples. 
The p-values $\bm{U}_n$ are generated from a normal-location model, similar to \cite{liu2020cauchy}'s simulation setting. That is, 
 $U_i=2-2\Phi(|Y_i|)$, where $Y_i\sim N(\bm{\mu},\Sigma)$ with all diagonal elements of the covariance matrix $\Sigma$ being 1 and all off-diagonal elements being $\rho$. The global null hypothesis is $\bm{\mu}=\bm{0}$. 
  We consider equal weights $w_i=1/n$. Weibull, Pareto, and log-Pareto calibrators are considered for the sum-combination statistics. Since all $F$ under consideration in this simulation have positive support, $S_{1,\mathcal{S},\ref{assump:w_equal}}(\bm{U}_n)=S_{2,\mathcal{S},\ref{assump:w_equal}}(\bm{U}_n)$, and only the results for $S_{1,\mathcal{S},\ref{assump:w_equal}}$ are reported. The max-combination statistic $S_{3,\mathcal{S},\ref{assump:w_equal}}(\bm{U}_n)$ is reported only once since its tail probability is invariant to the choice of $F$. For a Weibull calibrator the shape parameter $k$ is chosen as the upper limit in (\ref{eq:weibull:perfectcor2}), following Table \ref{table:optimal_weibull_logP}, and the scale parameter is 1. This sum-combination statistic is denoted as $W_v$. Weibull with a fixed $k$ was also compared but its over-rejection behavior was too severe and, therefore, is not reported here. For Pareto calibrators, we considered $\gamma=0.5,1,$ and 1.5. The sum-combining functions, $T_{1,F_{0.5}}$, $T_{1,F_{1}}$, and $T_{1,F_{1.5}}$, are presented, alongside with \cite{vovk2020combining}'s asymptotically precise combining functions, $M_{0.5}$, $M_1$, and $M_{1.5}$, as well as  Wilson's HMP with the Landau approximation $T_{1,W}$. For $M_{\gamma}$, $\widetilde{a}_{r,n}$ are chosen to satisfy Proposition 6 of \cite{vovk2020combining}, searching for the solutions on fine grids of their $y_K$. The chosen $\widetilde{a}_{r,n}$ values are 5.76, 7.45, and 10.11 for $n=25, 100,$ and 1000, respectively. $LP_5$ and $LP_v$ represent the ones with log-Pareto calibrators with $\gamma=5$ and with $\gamma$ chosen as in Table  \ref{table:optimal_weibull_logP}, respectively.
   For all combination methods except for $M_{\gamma}$,  the empirical rejection rates, finite sample analogs of $\bm{P}(S_{j,\mathcal{S},\ref{assump:w_equal}}(\bm{U}_n)>t_{\alpha,n})$, are computed using $t_{\alpha,n}$:
$$
t_{\alpha,n}=\left\{\begin{array}{ll}
n^{-1}\{-\log(\alpha n^{-1})\}^{\frac{1}{k}}, & {\rm if}~F~{\rm is~Weibull~with~shape~ } k {\rm and ~scale~}1,
\\\frac{n^{\frac{1}{\gamma}-1}}{\alpha^{\frac{1}{\gamma}}}, &{\rm if}~F~{\rm is~Pareto~with~index~} \gamma,
\\\frac{1}{n}\exp\{(\alpha n^{-1})^{-\frac{1}{\gamma}}\}&{\rm if}~F~{\rm is~log~Pareto~with~index~} \gamma.
\end{array}\right.$$
 The number of Monte Carlo replications is 1,000,000. We present the cases with $\rho=0,0.2,0.9$ to represent no, weak, and strong dependence. In particular, we chose $\rho=0.2$ to represent weak dependence because this seems the case where Wilson's HMP with the Landau adjustment shows the worst over-rejection behavior. This observation was also made by \cite{wilson2019reply} in their Fig 1 as a reply to \cite{goeman2019harmonic}'s argument for the over-rejection behavior of HMP in the presence of weak dependence with $\rho=0.2$. 

Tables \ref{table:size} and \ref{table:rawpower} report the empirical rejection rates under global null (size) and an alternative (power) of each combination method. For a better presentation of sizes, the size table, Table \ref{table:size}, presents the ratio between the raw sizes and the nominal ratio $\alpha$ rather than the raw sizes. This corresponds to $\bm{P}(S_{j,\mathcal{S},\ref{assump:w_equal}}(\bm{U}_n)>t_{\alpha,n})/\alpha$ for $W_v$, $T_{1,\gamma}$, $LP_5$, and $LP_{v}$, and $\bm{P}(S_{j,\mathcal{S},\ref{assump:w_equal}}(\bm{U}_n)>t_{\alpha,n})/\alpha_{wilson}$ for $T_{1,W}$, where $\alpha_{wilson}$ is the upper-tail $1/\alpha$th quantile of the Landau distribution $S(1,1,\pi/2,\log n + 0.874367)$. When this ratio is less than 1, it indicates that the corresponding test under-rejects, whereas if it is greater than 1, it over-rejects. The M-combining functions $M_{\gamma}$ are $\widetilde{a}_{r,n}M_{r,n}$ with $r=-1/\gamma$ and $\widetilde{a}$ chosen to make $M_{\gamma}$ asymptotically precise.
 For the power table, Table \ref{table:rawpower}, we considered a sparse alternative, where the first $s=\lfloor 0.05n\rfloor$ elements of $\bm{\mu}$ are $\mu_i=\sqrt{4\log n}/s^{0.1}$. The other elements $\mu_{s+1},\ldots,\mu_{n}$ of $\bm{\mu}$ are set as 0. Here, the symbol $\lfloor a\rfloor$ indicates the largest integer that is smaller than or equal to a real number $a$.

Our simulation results can be summarized as follows. First, when $\bm{U}_n$ are independent, the max-combination test $S_{1,\mathcal{S},\ref{assump:w_equal}}(\bm{U}_n)$, Wilson's HMP with the Landau adjustment, $T_{1,W}$, and the sum-combination test with $\gamma=0.5$, $T_{1,F_{0.5}}$, tend to perform the best. 
However, as $\rho$ increases, these tests tend to under-reject.
Second, the M-combining functions tend to under-rejection in all cases, which leads to some power losses. These power losses are sometimes quite drastic.
Third, our sum-combining function with Pareto $T_{1,F_{1}}$ shows some over-rejection when $\alpha=0.05$ or 0.1. The over-rejection is severer for larger $n$, smaller $\rho$, and larger $\alpha$. This behavior, particularly regarding $n$ and $\alpha$, is as expected in theory presented in Section \ref{sec:gammachoice}. 
Fourth, our sum-combining functions with Pareto $T_{1,\gamma}$ greatly depends on $\gamma$. When $\gamma=1.5$, the test $T_{1,F_{1.5}}$ over-rejects too seriously to provide any reliable test. When $\gamma=0.5$, on the contrary, provides quite reliable sizes and powers, similarly to Wilson's HMP with the Landau adjustment $T_{1,W}$.
Fifth, we can also see that the max-combining tests' sizes improve as $n$ increases when $\rho=0$, validating our proof in Section \ref{sec:ninft}. However, when $\rho=0.9$, the sizes get even worse (under-reject), and therefore, losing much power, as $n$ increases. 
Sixth, the log-Pareto with fixed $\gamma$, $LP_t$, does not seem cannot control the size with $\alpha=0.05$ and 0.1. However, when $\gamma$ is chosen according to our suggestion in (\ref{eq:example:gamma:logpareto}), the size seems to be best controlled, having more or less similar sizes as $T_{1,F_{1}}$.
Seventh, Weibull distribution does not control the size well even with $k$ chosen following (\ref{eq:weibull:perfectcor2}). We therefore do not recommend calibrators with relatively lighter tails, even if they are in the subexponential class.

Combining all results and observations in this paper, if the tests are known to be independent, the max-combining test, or the Bonferroni method, is the simplest with good sizes and powers. However, when dependence is suspected and the strength is unknown, we better take other measures. When $\alpha=0.01$ or smaller, unless the number of tests is extremely large, the sum-combination test with Pareto calibrator with index $\gamma=1$, Wilson's HMP with the Landau adjustment, or log-Pareto with varying $\gamma$ seems the best choices, especially when the dependence structure is either unknown or suspected to be quite strong. When relatively larger $\alpha$, 0.05 or 0.1, has to be chosen,  the decision depends on the number $n$ of the tests. If $n$ is relatively small with $n=25$ or $n=100$, Wilson's HMP with the Landau adjustment $T_{1,W}$ and Pareto with $\gamma=0.5$ $T_{1,F_{0.5}}$ provide the best-controlled sizes and reasonable powers when there is no or weak dependence. When $\rho=0.9$, both $T_{1,W}$ and $T_{1,F_{0.5}}$ under-rejects but $T_{1,W}$ may have better power.
 When relatively larger $\alpha$, 0.05 or 0.1, has to be chosen,  the decision depends on the number $n$ of the tests. If $n$ is relatively larger, $n=1000$, $T_{1,F_{0.5}}$ seems the best method for size control without sacrificing much power.


\begin{table}[ht]
\centering
\begin{tabular}{p{1mm}p{3mm}p{8mm}||r|rrrrrrr|rr|r}
\hline
\multirow{3}{*}{$\rho$}&\multirow{3}{*}{$n$}&\multirow{3}{*}{$\alpha$}&\multicolumn{10}{c|}{Sum}&\multirow{3}{*}{Max}\\\cline{4-13}
&&& W& \multicolumn{7}{c|}{Pareto}& \multicolumn{2}{c|}{log-Pareto}&\\\cline{4-13}
 & &  & $W_v$  & $T_{1,F_{0.5}}$ & $M_{0.5}$ & $T_{1,F_{1}}$ & $T_{1,W}$& $M_{1}$& $T_{1,F_{1.5}}$ & $M_{1.5}$ & $LP_v$ & $LP_5$ &  \\ 
   \hline\hline
\multirow{15}{*}{0}&\multirow{5}{*}{25}  & 0.1000 & 1.874 & 0.997 & 0.021 & 1.770 & 0.976 & 0.198 & 9.917 & 0.056 & 1.855 & 10.000 & 0.953 \\ 
&  & 0.0500 & 1.396 & 1.000 & 0.021 & 1.374 & 0.986 & 0.190 & 9.666 & 0.049 & 1.506 & 20.000 & 0.977 \\ 
&  & 0.0100 & 1.089 & 1.021 & 0.021 & 1.100 & 1.014 & 0.180 & 2.026 & 0.042 & 1.222 & 5.505 & 1.016 \\ 
&  & 0.0010 & 1.053 & 1.042 & 0.024 & 1.054 & 1.042 & 0.183 & 1.194 & 0.041 & 1.116 & 1.083 & 1.042 \\ 
&  & 0.0001 & 0.960 & 0.960 & 0.010 & 0.960 & 0.960 & 0.230 & 0.980 & 0.030 & 0.980 & 0.960 & 0.960 \\ 
   \cline{2-14}
&\multirow{5}{*}{100}   & 0.1000 & 3.226 & 1.001 & 0.005 & 2.248 & 0.995 & 0.150 & 10.000 & 0.027 & 2.051 & 10.000 & 0.955 \\ 
&   & 0.0500 & 1.851 & 1.001 & 0.005 & 1.534 & 0.997 & 0.142 & 20.000 & 0.024 & 1.586 & 20.000 & 0.978 \\ 
&   & 0.0100 & 1.103 & 0.995 & 0.004 & 1.090 & 0.991 & 0.133 & 4.167 & 0.021 & 1.214 & 2.709 & 0.990 \\ 
&   & 0.0010 & 0.986 & 0.979 & 0.007 & 0.989 & 0.977 & 0.144 & 1.233 & 0.017 & 1.061 & 0.985 & 0.979 \\ 
&   & 0.0001 & 1.090 & 1.080 & 0.030 & 1.090 & 1.080 & 0.120 & 1.150 & 0.030 & 1.140 & 1.080 & 1.080 \\ 
   \cline{2-14}
&\multirow{5}{*}{1000}& 0.1000 & 9.349 & 0.997 & 0.000 & 3.607 & 0.996 & 0.112 & 10.000 & 0.009 & 2.130 & 10.000 & 0.952 \\ 
  & & 0.0500 & 4.557 & 0.998 & 0.001 & 1.827 & 0.996 & 0.106 & 20.000 & 0.007 & 1.614 & 20.000 & 0.976 \\ 
  & & 0.0100 & 1.252 & 0.998 & 0.001 & 1.130 & 0.997 & 0.103 & 100.000 & 0.006 & 1.253 & 1.102 & 0.995 \\ 
  & & 0.0010 & 1.047 & 1.024 & 0.000 & 1.041 & 1.025 & 0.088 & 1.713 & 0.009 & 1.142 & 1.024 & 1.023 \\ 
  & & 0.0001 & 0.900 & 0.900 & 0.000 & 0.900 & 0.900 & 0.150 & 0.980 & 0.010 & 0.940 & 0.900 & 0.900 \\ 
   \hline
\multirow{15}{*}{0.2}&\multirow{5}{*}{25}& 0.1000 & 1.817 & 0.944 & 0.019 & 1.719 & 1.001 & 0.211 & 9.814 & 0.070 & 1.777 & 10.000 & 0.887 \\ 
 &  & 0.0500 & 1.497 & 0.968 & 0.018 & 1.446 & 1.052 & 0.198 & 8.514 & 0.055 & 1.532 & 20.000 & 0.926 \\ 
 &  & 0.0100 & 1.159 & 0.989 & 0.017 & 1.152 & 1.062 & 0.176 & 2.671 & 0.040 & 1.247 & 6.870 & 0.976 \\ 
 &  & 0.0010 & 0.968 & 0.920 & 0.015 & 0.964 & 0.956 & 0.145 & 1.232 & 0.038 & 1.019 & 0.971 & 0.915 \\ 
  & & 0.0001 & 0.870 & 0.870 & 0.020 & 0.870 & 0.870 & 0.150 & 0.900 & 0.020 & 0.900 & 0.870 & 0.850 \\ 
   \cline{2-14}
&\multirow{5}{*}{100}  & 0.1000 & 2.759 & 0.915 & 0.005 & 2.105 & 1.077 & 0.173 & 10.000 & 0.046 & 1.924 & 10.000 & 0.850 \\ 
  & & 0.0500 & 2.116 & 0.948 & 0.005 & 1.707 & 1.149 & 0.159 & 20.000 & 0.035 & 1.639 & 20.000 & 0.899 \\ 
  & & 0.0100 & 1.364 & 0.983 & 0.006 & 1.251 & 1.127 & 0.149 & 7.266 & 0.026 & 1.298 & 3.910 & 0.963 \\ 
  & & 0.0010 & 1.131 & 1.050 & 0.004 & 1.106 & 1.096 & 0.151 & 1.856 & 0.021 & 1.175 & 1.063 & 1.048 \\ 
& & 0.0001 & 1.190 & 1.170 & 0.010 & 1.200 & 1.190 & 0.140 & 1.310 & 0.030 & 1.220 & 1.170 & 1.170 \\ 
   \cline{2-14}
&\multirow{5}{*}{1000}  & 0.1000 & 5.554 & 0.848 & 0.000 & 2.728 & 1.189 & 0.151 & 10.000 & 0.030 & 1.897 & 10.000 & 0.774 \\ 
  & & 0.0500 & 4.144 & 0.898 & 0.000 & 2.223 & 1.342 & 0.134 & 20.000 & 0.017 & 1.652 & 20.000 & 0.837 \\ 
  & & 0.0100 & 2.285 & 0.973 & 0.000 & 1.529 & 1.347 & 0.108 & 95.950 & 0.009 & 1.355 & 1.123 & 0.942 \\ 
  & & 0.0010 & 1.246 & 0.991 & 0.000 & 1.105 & 1.082 & 0.110 & 7.437 & 0.005 & 1.141 & 0.985 & 0.978 \\ 
  & & 0.0001 & 1.100 & 1.090 & 0.000 & 1.100 & 1.100 & 0.070 & 1.630 & 0.000 & 1.140 & 1.080 & 1.080 \\ 
    \hline
\multirow{15}{*}{0.9}&\multirow{5}{*}{25} & 0.1000 & 1.115 & 0.298 & 0.008 & 1.086 & 0.712 & 0.189 & 5.269 & 0.186 & 1.071 & 10.000 & 0.192 \\ 
  & & 0.0500 & 1.122 & 0.312 & 0.008 & 1.090 & 0.836 & 0.190 & 5.182 & 0.183 & 1.072 & 20.000 & 0.207 \\ 
  & & 0.0100 & 1.124 & 0.350 & 0.008 & 1.095 & 1.018 & 0.198 & 4.968 & 0.182 & 1.072 & 10.538 & 0.254 \\ 
  & & 0.0010 & 1.167 & 0.401 & 0.006 & 1.135 & 1.123 & 0.187 & 4.701 & 0.163 & 1.102 & 0.784 & 0.299 \\ 
  & & 0.0001 & 1.120 & 0.420 & 0.010 & 1.120 & 1.120 & 0.220 & 4.430 & 0.110 & 1.110 & 0.410 & 0.310 \\ 
   \cline{2-14}
&\multirow{5}{*}{1000} & 0.1000 & 1.169 & 0.164 & 0.001 & 1.089 & 0.656 & 0.145 & 10.000 & 0.184 & 1.042 & 10.000 & 0.092 \\ 
  & & 0.0500 & 1.177 & 0.174 & 0.001 & 1.095 & 0.794 & 0.146 & 10.558 & 0.180 & 1.035 & 20.000 & 0.102 \\ 
  & & 0.0100 & 1.170 & 0.202 & 0.002 & 1.086 & 0.997 & 0.149 & 10.106 & 0.178 & 1.016 & 6.766 & 0.128 \\ 
  & & 0.0010 & 1.186 & 0.218 & 0.002 & 1.101 & 1.081 & 0.142 & 9.399 & 0.154 & 1.018 & 0.333 & 0.162 \\ 
  & & 0.0001 & 1.260 & 0.440 & 0.000 & 1.140 & 1.140 & 0.230 & 8.690 & 0.240 & 1.040 & 0.370 & 0.310 \\ 
   \cline{2-14}
&\multirow{5}{*}{1000}& 0.1000 & 1.251 & 0.061 & 0.000 & 1.088 & 0.577 & 0.109 & 10.000 & 0.186 & 1.000 & 10.000 & 0.027 \\ 
  & & 0.0500 & 1.264 & 0.066 & 0.000 & 1.095 & 0.735 & 0.109 & 20.000 & 0.182 & 0.990 & 20.000 & 0.031 \\ 
  & & 0.0100 & 1.279 & 0.075 & 0.000 & 1.098 & 0.989 & 0.110 & 32.991 & 0.178 & 0.964 & 0.370 & 0.038 \\ 
  & & 0.0010 & 1.312 & 0.091 & 0.000 & 1.112 & 1.102 & 0.113 & 30.827 & 0.158 & 0.928 & 0.070 & 0.056 \\ 
  & & 0.0001 & 1.300 & 0.110 & 0.000 & 1.140 & 1.140 & 0.130 & 29.030 & 0.170 & 0.950 & 0.060 & 0.050 \\ 
   \hline
\end{tabular}\caption{\footnotesize Ratios between raw sizes (empirical rejection rates under the global null) of combination statistics and their corresponding significance level $\alpha$ for various choices of the dependence strength $\rho$,  the number $n$ of tests, and the significance level $\alpha$. 1,000,000 Monte Carlo replications to obtain the tail probabilities. Since the tail probabilities for
the max-combination statistic $S_{3,\mathcal{S},\ref{assump:w_equal}}(\bm{U}_n)$ is invariant to the choice of calibrator $F$, reported only once in the last column. All the other columns are for the sum-combination statistics. The first column with "W" or $W_v$ is for a Weibull calibrator, with its shape parameter $k$ decided as the upper limit in (\ref{eq:weibull:perfectcor2}). The next seven columns are for Pareto calibrators: $T_{1,F_{0.5}}$, $T_{1,F_{1}}$, and $T_{1,F_{1.5}}$ are our sum-combining function $T_{1,F,w}(\bm{U}_n)$ in (\ref{eq:comb.fun.1}) with $\gamma=0.5,1,$ and $1.5$, respectively, and $M_{0.5}$, $M_{1}$, and $M_{1.5}$ are the asymptotically precise M-family merging functions  $\widetilde{a}_{r,n}M_{r,n}$ \citep{vovk2020combining} with $\gamma=-1/r=0.5,1,$ and $1.5$, respectively. The $T_{1,W}$ column is for Pareto 1 with Wilson's HMP with the Landau adjustment. The two columns with "log-Pareto" are using a log-Pareto calibrator with its index $\gamma$ chosen to satisfy (\ref{eq:example:gamma:logpareto}) ($LP_v$) or arbitrarily set as $\gamma=5$ ($LP_5$).
 }\label{table:size}
\end{table}

\begin{table}[ht]
\centering
\begin{tabular}{p{1mm}p{3mm}p{8mm}||r|rrrrrrr|rr|r}
\hline
\multirow{3}{*}{$\rho$}&\multirow{3}{*}{$n$}&\multirow{3}{*}{$\alpha$}&\multicolumn{10}{c|}{Sum}&\multirow{3}{*}{Max}\\\cline{4-13}
&&& W& \multicolumn{7}{c|}{Pareto}& \multicolumn{2}{c|}{log-Pareto}&\\\cline{4-13}
 & &  & $W_v$  & $T_{1,F_{0.5}}$ & $M_{0.5}$ & $T_{1,F_{1}}$ & $T_{1,W}$& $M_{1}$& $T_{1,F_{1.5}}$ & $M_{1.5}$ & $LP_v$ & $LP_5$ &  \\ 
   \hline\hline
\multirow{15}{*}{0}&\multirow{5}{*}{25}& 0.1000 & 0.859 & 0.789 & 0.361 & 0.855 & 0.787 & 0.600 & 1.000 & 0.458 & 0.861 & 1.000 & 0.783 \\ 
  & & 0.0500 & 0.747 & 0.708 & 0.302 & 0.746 & 0.707 & 0.516 & 0.953 & 0.377 & 0.757 & 1.000 & 0.705 \\ 
  & & 0.0100 & 0.531 & 0.523 & 0.189 & 0.532 & 0.523 & 0.350 & 0.600 & 0.235 & 0.543 & 0.717 & 0.523 \\ 
  & & 0.0010 & 0.303 & 0.302 & 0.089 & 0.303 & 0.302 & 0.182 & 0.313 & 0.112 & 0.307 & 0.305 & 0.302 \\ 
  & & 0.0001 & 0.152 & 0.152 & 0.038 & 0.153 & 0.152 & 0.084 & 0.154 & 0.049 & 0.154 & 0.152 & 0.152 \\ 
   \cline{2-14}
&\multirow{5}{*}{100}& 0.1000 & 1.000 & 0.996 & 0.640 & 1.000 & 0.998 & 0.969 & 1.000 & 0.899 & 1.000 & 1.000 & 0.994 \\ 
   & & 0.0500 & 0.998 & 0.989 & 0.552 & 0.997 & 0.994 & 0.934 & 1.000 & 0.820 & 0.996 & 1.000 & 0.985 \\ 
   & & 0.0100 & 0.964 & 0.935 & 0.362 & 0.956 & 0.952 & 0.790 & 0.995 & 0.591 & 0.953 & 0.986 & 0.927 \\ 
   & & 0.0010 & 0.768 & 0.726 & 0.170 & 0.755 & 0.754 & 0.494 & 0.825 & 0.298 & 0.748 & 0.728 & 0.717 \\ 
   & & 0.0001 & 0.464 & 0.439 & 0.069 & 0.457 & 0.457 & 0.246 & 0.498 & 0.126 & 0.452 & 0.437 & 0.435 \\ 
   \cline{2-14}
&\multirow{5}{*}{1000} & 0.1000 & 1.000 & 1.000 & 0.790 & 1.000 & 1.000 & 1.000 & 1.000 & 1.000 & 1.000 & 1.000 & 1.000 \\ 
   & & 0.0500 & 1.000 & 1.000 & 0.685 & 1.000 & 1.000 & 1.000 & 1.000 & 1.000 & 1.000 & 1.000 & 1.000 \\ 
   & & 0.0100 & 1.000 & 1.000 & 0.432 & 1.000 & 1.000 & 1.000 & 1.000 & 0.974 & 1.000 & 1.000 & 1.000 \\ 
   & & 0.0010 & 1.000 & 0.995 & 0.178 & 1.000 & 1.000 & 0.933 & 1.000 & 0.627 & 0.998 & 0.993 & 0.991 \\ 
   & & 0.0001 & 0.957 & 0.877 & 0.062 & 0.934 & 0.934 & 0.591 & 0.992 & 0.237 & 0.903 & 0.861 & 0.858 \\ 
   \hline
 \multirow{15}{*}{0.2} & \multirow{5}{*}{25}& 0.1000 & 0.859 & 0.791 & 0.362 & 0.856 & 0.789 & 0.600 & 1.000 & 0.458 & 0.862 & 1.000 & 0.784 \\ 
   & & 0.0500 & 0.747 & 0.709 & 0.302 & 0.748 & 0.708 & 0.516 & 0.949 & 0.377 & 0.759 & 1.000 & 0.705 \\ 
   & & 0.0100 & 0.531 & 0.524 & 0.190 & 0.533 & 0.524 & 0.351 & 0.600 & 0.236 & 0.544 & 0.718 & 0.523 \\ 
   & & 0.0010 & 0.303 & 0.302 & 0.089 & 0.303 & 0.302 & 0.182 & 0.313 & 0.112 & 0.308 & 0.305 & 0.302 \\ 
 & & 0.0001 & 0.153 & 0.153 & 0.038 & 0.153 & 0.153 & 0.085 & 0.154 & 0.049 & 0.154 & 0.153 & 0.153 \\ 
   \cline{2-14}
&\multirow{5}{*}{100}& 0.1000 & 1.000 & 0.985 & 0.567 & 0.999 & 0.993 & 0.924 & 1.000 & 0.822 & 0.998 & 1.000 & 0.980 \\ 
   & & 0.0500 & 0.994 & 0.965 & 0.489 & 0.989 & 0.979 & 0.870 & 1.000 & 0.734 & 0.986 & 1.000 & 0.958 \\ 
   & & 0.0100 & 0.916 & 0.872 & 0.324 & 0.902 & 0.896 & 0.706 & 0.986 & 0.525 & 0.897 & 0.962 & 0.864 \\ 
   & & 0.0010 & 0.685 & 0.646 & 0.156 & 0.673 & 0.672 & 0.440 & 0.738 & 0.274 & 0.666 & 0.648 & 0.637 \\ 
   & & 0.0001 & 0.414 & 0.391 & 0.065 & 0.408 & 0.408 & 0.225 & 0.445 & 0.121 & 0.403 & 0.389 & 0.387 \\ 
      \cline{2-14}
&\multirow{5}{*}{1000} & 0.1000 & 1.000 & 1.000 & 0.578 & 1.000 & 1.000 & 0.997 & 1.000 & 0.970 & 1.000 & 1.000 & 0.999 \\ 
   & & 0.0500 & 1.000 & 0.999 & 0.495 & 1.000 & 1.000 & 0.986 & 1.000 & 0.923 & 1.000 & 1.000 & 0.997 \\ 
   & & 0.0100 & 1.000 & 0.982 & 0.320 & 0.997 & 0.996 & 0.921 & 1.000 & 0.758 & 0.991 & 0.987 & 0.977 \\ 
   & & 0.0010 & 0.941 & 0.882 & 0.145 & 0.921 & 0.921 & 0.710 & 0.992 & 0.449 & 0.902 & 0.876 & 0.870 \\ 
   & & 0.0001 & 0.739 & 0.659 & 0.055 & 0.711 & 0.711 & 0.427 & 0.817 & 0.202 & 0.682 & 0.647 & 0.645 \\ 
   \hline
\multirow{15}{*}{0.9} & \multirow{5}{*}{25}&  0.1000 & 0.838 & 0.776 & 0.361 & 0.854 & 0.771 & 0.589 & 1.000 & 0.444 & 0.870 & 1.000 & 0.770 \\ 
   & & 0.0500 & 0.719 & 0.698 & 0.302 & 0.728 & 0.690 & 0.509 & 0.969 & 0.370 & 0.746 & 1.000 & 0.695 \\ 
   & & 0.0100 & 0.524 & 0.520 & 0.190 & 0.525 & 0.517 & 0.349 & 0.573 & 0.235 & 0.536 & 0.695 & 0.520 \\ 
   & & 0.0010 & 0.302 & 0.302 & 0.089 & 0.302 & 0.302 & 0.182 & 0.309 & 0.112 & 0.306 & 0.304 & 0.302 \\ 
   & & 0.0001 & 0.153 & 0.153 & 0.038 & 0.153 & 0.153 & 0.085 & 0.154 & 0.049 & 0.155 & 0.153 & 0.153 \\ 
    \cline{2-14}
&\multirow{5}{*}{100}& 0.1000 & 0.978 & 0.803 & 0.300 & 0.954 & 0.851 & 0.665 & 1.000 & 0.547 & 0.936 & 1.000 & 0.778 \\ 
   & & 0.0500 & 0.852 & 0.737 & 0.250 & 0.824 & 0.782 & 0.590 & 1.000 & 0.466 & 0.810 & 1.000 & 0.713 \\ 
   & & 0.0100 & 0.661 & 0.574 & 0.157 & 0.633 & 0.623 & 0.425 & 0.821 & 0.312 & 0.615 & 0.736 & 0.554 \\ 
   & & 0.0010 & 0.416 & 0.355 & 0.074 & 0.398 & 0.397 & 0.238 & 0.471 & 0.159 & 0.381 & 0.359 & 0.341 \\ 
 & & 0.0001 & 0.228 & 0.193 & 0.031 & 0.219 & 0.218 & 0.117 & 0.261 & 0.073 & 0.208 & 0.188 & 0.185 \\ 
    \cline{2-14}
&\multirow{5}{*}{1000} & 0.1000 & 1.000 & 0.703 & 0.120 & 0.942 & 0.818 & 0.617 & 1.000 & 0.520 & 0.862 & 1.000 & 0.654 \\ 
   & & 0.0500 & 0.884 & 0.635 & 0.096 & 0.811 & 0.759 & 0.542 & 1.000 & 0.441 & 0.743 & 1.000 & 0.588 \\ 
   & & 0.0100 & 0.690 & 0.477 & 0.055 & 0.618 & 0.606 & 0.383 & 1.000 & 0.290 & 0.546 & 0.511 & 0.438 \\ 
   & & 0.0010 & 0.432 & 0.283 & 0.023 & 0.384 & 0.382 & 0.208 & 0.585 & 0.144 & 0.328 & 0.269 & 0.258 \\ 
   & & 0.0001 & 0.234 & 0.149 & 0.009 & 0.209 & 0.208 & 0.100 & 0.332 & 0.064 & 0.175 & 0.137 & 0.135 \\ 
   \hline
\end{tabular}\caption{\footnotesize Raw powers (empirical rejection rates under an alternative) of combination statistics for various choices of the dependence strength $\rho$,  the number $n$ of tests, and the significance level $\alpha$.  1,000,000 Monte Carlo replications to obtain the tail probabilities. Since the tail probabilities for
the max-combination statistic $S_{3,\mathcal{S},\ref{assump:w_equal}}(\bm{U}_n)$ is invariant to the choice of calibrator $F$, reported only once in the last column. All the other columns are for the sum-combination statistics. The first column with "W" or $W_v$ is for a Weibull calibrator, with its shape parameter $k$ decided as the upper limit in (\ref{eq:weibull:perfectcor2}). The next seven columns are for Pareto calibrators: $T_{1,F_{0.5}}$, $T_{1,F_{1}}$, and $T_{1,F_{1.5}}$ are our sum-combining function $T_{1,F,w}(\bm{U}_n)$ in (\ref{eq:comb.fun.1}) with $\gamma=0.5,1,$ and $1.5$, respectively, and $M_{0.5}$, $M_{1}$, and $M_{1.5}$ are the asymptotically precise M-family merging functions  $\widetilde{a}_{r,n}M_{r,n}$ \citep{vovk2020combining} with $\gamma=-1/r=0.5,1,$ and $1.5$, respectively. The $T_{1,W}$ column is for Pareto 1 with Wilson's HMP with the Landau adjustment. The two columns with "log-Pareto" are using a log-Pareto calibrator with its index $\gamma$ chosen to satisfy (\ref{eq:example:gamma:logpareto}) ($LP_v$) or arbitrarily set as $\gamma=5$ ($LP_5$).}\label{table:rawpower}
\end{table}


\section*{Acknowledgement}
Rho acknowledges partial support from NSF-CPS grant \#1739422. The author extends gratitude to the participants at the 2022 IMS International Conference on Statistics and Data Science (ICSDS) for their valuable comments and discussions.

\bibliographystyle{chicago}
\bibliography{References.bib}

\appendix

\section*{APPENDICES}

\begin{proof}[Proof of Theorem \ref{thm:bivariate_normal}]
Without loss of generality, assume $F$ has positive support. 
Let $y_1=\overline{\Phi}^{-1}\left(\frac{1}{2}\overline{F}(x_1)\right)$ and $y_2=\overline{\Phi}^{-1}\left(\frac{1}{2}\overline{F}(x_2)\right)$, which are functions in $x_1$ and $x_2$, respectively. Since both $\overline{F}$ and $\overline{\Phi}^{-1}$ are decreasing functions,
$\bm{P}(X_1>x_1)=\bm{P}\left(|Y_1|>\overline{\Phi}^{-1}\left(\frac{1}{2}\overline{F}(x_1)\right)\right)=\bm{P}(|Y_1|>y_1)=2\overline{\Phi}(y_1)$. 

The joint density of $|Y_1|$ and $|Y_2|$ is, for $y_1>0$ and $y_2>0$,
$$f_{|Y_1|,|Y_2|}(y_1,y_2)=\frac{1}{2\pi\sqrt{1-\rho^2}}\left\{2\exp\left(-\frac{y_1^2+y_2^2-2\rho y_1 y_2}{2(1-\rho^2)}\right)+2\exp\left(-\frac{y_1^2+y_2^2+2\rho y_1 y_2}{2(1-\rho^2)}\right)\right\}$$
and the marginal density of $|Y_2|$ is
$$f_{|Y_2|}(y_2)=\frac{2}{\sqrt{2\pi}}\exp(-y_2^2/2),~~~y_2>0.$$
The conditional density of $|Y_1|$ given $|Y_2|$ is
$$f_{|Y_1| \big| |Y_2|}(y_1|y_2)=\frac{1}{\sqrt{2\pi(1-\rho^2)}}\left\{\exp\left(-\frac{(y_1-\rho y_2)^2}{2(1-\rho^2)}\right)+\exp\left(-\frac{(y_1+\rho y_2)^2}{2(1-\rho^2)}\right)\right\},$$
and therefore,
\begin{equation}\label{eq:prof_multnorm1}
\bm{P}\left(|Y_1|>y_1\big| |Y_2|=y_2\right)=\overline{\Phi}\left(\frac{y_1-\rho y_2}{\sqrt{1-\rho^2}}\right)+\overline{\Phi}\left(\frac{y_1+\rho y_2}{\sqrt{1-\rho^2}}\right).\end{equation}

Suppose that Assumption \ref{assmp:geluktang} or \ref{assmp:kotang} is satisfied, or equivalently, there exists $t_0>0$ such that
\begin{equation}\label{eq:proof1-ratio}
\frac{\bm{P}\left(|Y_1|>y_1\big| |Y_2|=y_2\right)}{\bm{P}\left(|Y_1|>y_1 \right)} =\frac{\overline{\Phi}\left(\frac{y_1-\rho y_2}{\sqrt{1-\rho^2}}\right)+\overline{\Phi}\left(\frac{y_1+\rho y_2}{\sqrt{1-\rho^2}}\right)}{2\overline{\Phi}(y_1)}
\end{equation}
is uniformly bounded for any $x_1>t_0$ and $x_2>t_0$, or for $x_1\in[0,t-t_0]$ and $x_2\in[t_0,t]$ whenever $t>t_0$, respectively. Since $y_1$ and $y_2$ are increasing functions in $x_1$ and $x_2$ without any upper limit, one can always find $y_2$ for any given $y_1$ such that $|\rho|y_2>2y_1$ by setting $x_2$ much larger than $x_1$. In addition, for any arbitrarily small $\epsilon>0$, we can always find $y_1$ such that $\overline{\Phi}(y_1)<\epsilon$. Therefore, $$\overline{\Phi}\left(\frac{y_1-|\rho| y_2}{\sqrt{1-\rho^2}}\right)>\overline{\Phi}\left(\frac{-y_1}{\sqrt{1-\rho^2}}\right)={\Phi}\left(\frac{y_1}{\sqrt{1-\rho^2}}\right)>{\Phi}(y_1)>1-\epsilon,$$
and hence, a lower bound of the right-hand side of (\ref{eq:proof1-ratio}) is $(1-\epsilon)/2\epsilon$, which can be arbitrarily large. This means that such $t_0$ cannot exist, or equivalently, Assumption \ref{assmp:geluktang} nor \ref{assmp:kotang} can be satisfied.
\end{proof}

\begin{proof}[Proof of Theorem \ref{thm:inS_rho1}]
The proof for $S_{1,F,w}(\bm{U}_n)$ is straightforward  considering that $\sum_{i=1}^nw_iX_i=\sum_{i=1}^{m+1}{w}^{*}_iX_i$ with ${w}^{*}_i=w_i$ for $i=1,\ldots,m$ and ${w}^{*}_{m+1}=\sum_{i=m+1}^nw_i$.
For $S_{3,F,w}(\bm{U}_n)$,
$\vee_{i=1}^nw_iX_i=\vee_{i=1}^{m+1}{w}^{**}_iX_i$, where ${w}^{**}_i=w_i$ for $i=1,\ldots,m$ and ${w}^{**}_{m+1}=\vee_{i=m+1}^nw_i$.  Since $\bm{E}w_i>0$ for all $i=1,\ldots,n$, $\sum_{i=1}^{m+1}\bm{E}w^{**}_i<\sum_{i=1}^n\bm{E}w_i=1$ whenever $m\leq n-2$. Using the rescaled weights $w^{**}_i\left(\sum_{j=1}^{m+1}\bm{E}w^{**}_j\right)^{-1}$ for $i=1,\ldots,m+1$, we obtain the desired result based on Theorem \ref{thm:inS} (\ref{thm:inS-2}).
\end{proof}

The following Lemmas are useful throughout the proof in this paper.
\begin{lemma}\label{lemma1}
 For a nonrandom number $w\in(0,1)$,  $w^{r_1}> w^{r_2}$ for any $0<r_1<r_2<\infty$.
 \end{lemma}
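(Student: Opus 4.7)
The statement is elementary and I expect the proof to be a one-line observation rather than a substantial argument. The plan is to reduce the inequality $w^{r_1} > w^{r_2}$ to a statement about the sign of an exponent, exploiting the fact that $w \in (0,1)$ forces $\log w < 0$.

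First I would write $w^{r_2}/w^{r_1} = w^{r_2 - r_1}$, which is well-defined since $w > 0$. Since $r_2 - r_1 > 0$ by assumption, and since for any $w \in (0,1)$ and any strictly positive exponent $s > 0$ one has $w^s = \exp(s \log w) < \exp(0) = 1$ (using $\log w < 0$), the ratio $w^{r_2-r_1}$ is strictly less than $1$. Multiplying through by $w^{r_1} > 0$ yields $w^{r_2} < w^{r_1}$, which is the desired inequality.

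Alternatively, and perhaps more cleanly, I could argue via monotonicity of the exponential map: write $w^{r} = e^{r \log w}$, regard this as a function of $r$ with $\log w$ as a negative constant, and observe that $r \mapsto r \log w$ is strictly decreasing in $r$; composing with the strictly increasing function $e^{(\cdot)}$ preserves this, so $r \mapsto w^r$ is strictly decreasing on $(0,\infty)$, giving $w^{r_1} > w^{r_2}$ whenever $r_1 < r_2$.

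There is no real obstacle here; the only thing to be careful about is ensuring that the argument uses $w > 0$ (for $w^r$ to be defined via the exponential) and $w < 1$ (to get $\log w < 0$, which is the essential negativity driving the inequality). Both are guaranteed by the hypothesis $w \in (0,1)$.
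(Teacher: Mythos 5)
Your proof is correct, and the paper itself simply declares Lemma \ref{lemma1} trivial without giving any argument, so your one-line reduction to $w^{r_2-r_1}<1$ (equivalently, monotonicity of $r\mapsto e^{r\log w}$ with $\log w<0$) is exactly the standard justification being left implicit. Nothing further is needed.
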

 \begin{lemma}\label{lemma2}
      For a nonnegative random variable $w$ with $\bm{E}(0<w<1)=1$, $ \bm{E}w^{r_2}< \bm{E}w^{r_1}<1$ for any $0<r_1<r_2<\infty$.
\end{lemma}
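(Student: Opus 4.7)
The plan is to lift the pointwise inequalities of Lemma \ref{lemma1} to the expectation setting, using only the elementary fact that a nonnegative integrable random variable which is strictly positive on a set of full probability has strictly positive expectation. Since the hypothesis $\bm{P}(0<w<1)=1$ places $w$ almost surely in the interval where Lemma \ref{lemma1} applies, the needed strict pointwise inequalities $w^{r_1}>w^{r_2}$ and $w^{r_1}<1$ already hold almost surely for any $0<r_1<r_2<\infty$.

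For the first inequality, I would define $Y:=w^{r_1}-w^{r_2}$. By the pointwise application of Lemma \ref{lemma1} on the event $\{0<w<1\}$, we have $Y>0$ almost surely. Since $0<w<1$ a.s.\ also forces $0<Y<1$ a.s., $Y$ is bounded and therefore integrable, and its expectation must be strictly positive, yielding $\bm{E}w^{r_1}>\bm{E}w^{r_2}$. For the second inequality, the same argument applied to $Z:=1-w^{r_1}$ (which is almost surely strictly positive and bounded by $1$) gives $\bm{E}Z>0$, i.e.\ $\bm{E}w^{r_1}<1$.

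There is no real obstacle: the entire content is that strict pointwise inequalities on events of full probability survive taking expectations whenever the integrand is integrable. Note that the condition $w<1$ a.s.\ is essential for the strict upper bound in the conclusion, while the condition $w>0$ a.s.\ is not actually needed for strictness of $\bm{E}w^{r_1}>\bm{E}w^{r_2}$ (it would suffice that $\bm{P}(0<w<1)>0$ with $w\in[0,1]$ a.s.), but the stronger hypothesis keeps the statement symmetric with Lemma \ref{lemma1}.
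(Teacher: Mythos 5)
Your proof is correct and takes essentially the same route as the paper's: it lifts the strict pointwise inequalities of Lemma \ref{lemma1}, which hold almost surely on the full-probability event $\{0<w<1\}$, through the expectation. The only difference is cosmetic---the paper writes the expectation as an integral against a density $f_w$, whereas your measure-theoretic phrasing does not require $w$ to have a density.
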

\begin{proof}[Proofs of Lemmas \ref{lemma1} and \ref{lemma2}]
Lemma \ref{lemma1} is trivial. Lemma \ref{lemma2} is a straightforward result of Lemma \ref{lemma1} by observing $\int_0^1\omega^{r_2}f_w(\omega)d\omega<\int_0^1\omega^{r_1}f_w(\omega)d\omega<\int_0^1f_w(\omega)d\omega=1$ when $\omega^{r_2}<\omega^{r_1}<1$ for any $\omega\in(0,1)$. Here,  $f_w(\omega)$ is the density function of $w$.
\end{proof}

\begin{lemma}\label{lemma:wigamma}
    Let $0\leq w_i\leq 1$ for all $i=1,\ldots,n$ and $\sum_{i=1}^nw_i=1$. For any positive integer $n$,
    \begin{enumerate}[{\normalfont (i)}]
        \item $1\leq\sum_{i=1}^n w_i^r\leq n^{1-r}$ for any $0<r<1$;
         \item $0<n^{1-r}\leq\sum_{i=1}^n w_i^r\leq1$ when $r>1$. 
    \end{enumerate}
    The upper or lower bound $n^{1-r}$ of $\sum_{i=1}^nw_i^r$ is achieved when $w_i=n^{-1}$. In addition, if the trivial case with $w_i=1$ for some $i$ and $w_j=0$ for all $j\neq i$ is excluded, the inequalities involving 1 above become strict: $\sum_{i=1}^nw_i^r>1$ when $0<r<1$ and  $\sum_{i=1}^nw_i^r<1$ when $r>1$. 
\end{lemma}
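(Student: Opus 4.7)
The plan is to derive both parts from two elementary ingredients: the pointwise comparison between $w^r$ and $w$ for $w\in[0,1]$, and Jensen's inequality applied to the map $x\mapsto x^r$. No heavy machinery is needed and the argument is essentially a one-pager; the only thing to be careful about is the equality/strict-inequality clause at the end.

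First I would establish the bounds against $1$. For any $w\in[0,1]$ the function $s\mapsto w^s$ is nonincreasing in $s\in[0,\infty)$, so $w_i^r\geq w_i^1=w_i$ whenever $0<r<1$ and $w_i^r\leq w_i$ whenever $r>1$. Summing over $i$ and using $\sum_{i=1}^n w_i=1$ immediately gives $\sum_{i=1}^n w_i^r\geq 1$ in case~(i) and $\sum_{i=1}^n w_i^r\leq 1$ in case~(ii).

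Next I would obtain the bounds against $n^{1-r}$ from Jensen's inequality (equivalently, the power-mean inequality) applied to the uniform probability measure on $\{1,\dots,n\}$. Since $x\mapsto x^r$ is concave on $[0,\infty)$ for $0<r<1$ and convex for $r>1$, Jensen gives $n^{-1}\sum_{i=1}^n w_i^r \leq \bigl(n^{-1}\sum_{i=1}^n w_i\bigr)^r = n^{-r}$ in the concave case and the reverse inequality in the convex case. Multiplying by $n$ yields $\sum_{i=1}^n w_i^r\leq n^{1-r}$ when $0<r<1$ and $\sum_{i=1}^n w_i^r\geq n^{1-r}$ when $r>1$. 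The equality case in Jensen forces $w_1=\cdots=w_n$, which combined with $\sum w_i=1$ gives $w_i=1/n$.

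Finally, for the strict versions of the bounds against $1$ (excluding the trivial case where some $w_i=1$ and the rest vanish), there must exist at least one index $j$ with $w_j\in(0,1)$. For such a $j$ the monotonicity $s\mapsto w_j^s$ is strictly decreasing, so $w_j^r>w_j$ when $0<r<1$ and $w_j^r<w_j$ when $r>1$, strictly; summing preserves the strict inequality and yields $\sum w_i^r>1$ in case~(i) and $\sum w_i^r<1$ in case~(ii). I do not anticipate any genuine obstacle in this proof, only the bookkeeping of equality/strict-inequality cases.
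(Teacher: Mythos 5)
Your proof is correct, and its overall structure matches the paper's: the bounds against $1$ are obtained in both cases from the elementary pointwise comparison of $w^r$ with $w$ on $[0,1]$ together with $\sum_i w_i=1$, and the strictness clause is handled the same way (outside the degenerate case some $w_j$ lies in $(0,1)$, making the pointwise comparison strict). The only divergence is in the bounds against $n^{1-r}$: the paper applies H\"older's inequality with exponents $p_r=1/r$ and $q_r=1/(1-r)$ (and, for $r>1$, applies it in the reverse direction to $\sum_i w_i$), whereas you invoke Jensen's inequality for $x\mapsto x^r$ under the uniform measure on $\{1,\dots,n\}$. These are two standard routes to the same power-mean inequality and are interchangeable here; your Jensen version is arguably slightly cleaner because it treats the cases $r<1$ and $r>1$ symmetrically and delivers the equality case $w_i=1/n$ directly from strict concavity/convexity, while the paper's H\"older argument requires a separate computation for each case.
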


\begin{proof}[Proof of Lemma \ref{lemma:wigamma}]
Consider $0<r<1$. Define $p_r=1/r$ and $q_r=1/(1-r)$ so that $1/p_r+1/q_r=1$ with $p_r,q_r>1$. Using H\"older's inequality, 
$$\sum_{i=1}^nw_i^r\leq\left(\sum_{i=1}^n|w_i^r|^{p_r}\right)^{\frac{1}{p_r}}\left(\sum_{i=1}^n1^{q_r}\right)^{\frac{1}{q_r}}=\left(\sum_{i=1}^nw_i\right)^{r}n^{1-r}=n^{1-r}.$$
The equality holds if $w_1=\cdots=w_n$.

When $r>1$, H\"older's inequality implies
$$\sum_{i=1}^nw_i\leq\left(\sum_{i=1}^nw_i^r\right)^{\frac{1}{r}}\left(\sum_{i=1}^n1^{\frac{r}{r-1}}\right)^{\frac{r-1}{r}},$$
which can be simplified as
$$1\leq \left(\sum_{i=1}^nw_i^r\right)^{\frac{1}{r}}n^{\frac{r-1}{r}}$$
by plugging in $\sum_{i=1}^nw_i=1$. Therefore, $n^{r-1}\sum_{i=1}^nw_i^r\geq1$, which leads to $\sum_{i=1}^n w_i^r\geq n^{1-r}$. Again, the equality holds when $w_1=\cdots=w_n$. 

The other inequalities $\sum_{i=1}^nw_i^r>1$ when $r\in(0,1)$ and $\sum_{i=1}^nw_i^r<1$ when $r>1$ are straightforward result from Leamma \ref{lemma1} and $\sum_{i=1}^nw_i=1$, whenever all $w_i$ avoid 0 and 1. 
In the trivial case with $w_i$ being either 0 or 1, $\sum_{i=1}^nw_i^r=1$ regardless the value of $r$. 
\end{proof}

\begin{lemma}\label{lemma:wi_jensen}
For any nonnegative random variable $w$, $(\bm{E}w^{r_1})^{\frac{1}{r_1}}\leq(\bm{E}w^{r_2})^{\frac{1}{r_2}}$ for any $0<r_1<r_2<\infty$. The equality holds only when $w$ is nonrandom. 
\end{lemma}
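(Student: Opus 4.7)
The plan is to recognize this inequality as the standard monotonicity of $L^p$ norms on a probability space, which follows from a single application of Jensen's inequality. Specifically, set $p = r_2/r_1 > 1$ and apply Jensen's inequality to the convex function $\phi(x) = x^p$ on $[0,\infty)$ with the random variable $Z = w^{r_1}$ (which is nonnegative and integrable, or we reduce to the case where $\bm{E}w^{r_1}<\infty$ since otherwise the inequality is trivial). Jensen's inequality gives $\bm{E}\phi(Z) \geq \phi(\bm{E}Z)$, which unpacks to $\bm{E}w^{r_2} \geq (\bm{E}w^{r_1})^{r_2/r_1}$; taking $r_2$-th roots of both sides gives the desired inequality.

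For the equality statement, I would invoke the strict-convexity version of Jensen's inequality: since $\phi(x) = x^p$ with $p>1$ is strictly convex on $[0,\infty)$, equality $\bm{E}\phi(Z) = \phi(\bm{E}Z)$ holds if and only if $Z = w^{r_1}$ is almost surely constant, which in turn forces $w$ to be almost surely constant since $x \mapsto x^{r_1}$ is injective on $[0,\infty)$.

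The argument is essentially one line and has no serious obstacle. The only minor bookkeeping items are (i) handling the degenerate case $\bm{E}w^{r_1} = \infty$ separately (in which case the inequality is vacuous after we note that $\bm{E}w^{r_2} = \infty$ as well, by another application of the same Jensen step in reverse, or simply by noting that the inequality $0 \cdot \infty$ situation does not arise here since both sides would be $+\infty$) and (ii) being precise that ``$w$ nonrandom'' in the paper's notation means $w$ is almost surely equal to a constant with respect to $\bm{P}$.
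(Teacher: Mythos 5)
Your proof is correct and is essentially identical to the paper's: both apply Jensen's inequality with the convex function $x\mapsto x^{r_2/r_1}$ to the random variable $w^{r_1}$ and then take $r_2$-th roots, with the equality case following from strict convexity. Your additional remarks on the infinite-expectation case and the meaning of ``nonrandom'' are harmless refinements of the same argument.
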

\begin{proof}
This is a straightforward result of applying Jensen's inequality with a convex function $\varphi(x)=x^{\frac{r_2}{r_1}}$ to $w^{r_1}$. That is, $\varphi\left(\bm{E}w^{r_1}\right)=\left(\bm{E}w^{r_1}\right)^{\frac{r_2}{r_1}}\leq \bm{E}\left(\varphi(w)\right)=\bm{E}\left(w^{r_1\frac{r_2}{r_1}}\right)=\bm{E}w^{r_2}$. Since $\varphi(x)$ is not a linear function, the equality holds only when $w$ is nonrandom. 
\end{proof}

\begin{lemma}\label{lemma:wigamma_random}
   Consider a nonnegative random sequence $\{w_i\}_{i=1}^n$ that satisfies ASsumption \ref{assump:w_random}. That is, $\bm{P}(0<w_i<1)=1$ and $\sum_{i=1}^n\bm{E}w_i=1.$  For any positive integer $n$,
    \begin{enumerate}[{\normalfont (i)}]
        \item $1\leq\sum_{i=1}^n \bm{E}w_i^r \leq n^{1-r}$ for any   $0<r<1$; 
         \item $0<n^{1-r}\leq\sum_{i=1}^n \bm{E}w_i^r\leq1$ for any $r>1$; 
         \item\label{lemma:w_random_item3} $1\leq \sum_{i=1}^n\left(\bm{E}w_i^r\right)^{1/r}\leq n^{1-\frac{1}{r}}$ for any $r>1$.
    \end{enumerate}
    The upper or lower bound $n^{1-\gamma}$ of $\sum_{i=1}^nw_i^\gamma$  is achieved only when all $w_i$ are nonrandom and  $w_i=n^{-1}$ for all $i$. In addition, if the trivial case with $\bm{E}w_i=1$ for some $i$ and $\bm{E}w_j=0$ for all $j\neq i$ is excluded, the inequalities involving 1 also become strict: $\sum_{i=1}^n\bm{E}w_i^\gamma>1$ when $0<\gamma<1$ and  $\sum_{i=1}^n\bm{E}w_i^\gamma<1$ when $\gamma>1$.    
\end{lemma}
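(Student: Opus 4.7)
The plan is to reduce each of the three inequalities to a combination of the deterministic Lemma \ref{lemma:wigamma} with the random-variable inequalities stated just before it (Lemmas \ref{lemma1}, \ref{lemma2}, \ref{lemma:wi_jensen}), glued together by Jensen's and H\"older's inequalities. The hypothesis $\bm{P}(0<w_i<1)=1$ is used implicitly throughout to guarantee that the deterministic numbers $\{\bm{E}w_i\}$ lie in $(0,1)$ and sum to $1$, so that Lemma \ref{lemma:wigamma} can be applied to them.

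For part (i), where $0<r<1$, the lower bound comes directly from Lemma \ref{lemma2} with $r_1=r$, $r_2=1$, which yields $\bm{E}w_i^{r}>\bm{E}w_i$ for every $i$; summation and $\sum_{i=1}^n\bm{E}w_i=1$ give $\sum_{i=1}^n\bm{E}w_i^r>1$. For the upper bound, Jensen's inequality applied to the concave map $x\mapsto x^r$ yields $\bm{E}w_i^r\le(\bm{E}w_i)^r$, and feeding the deterministic vector $\{\bm{E}w_i\}$ into Lemma \ref{lemma:wigamma}(i) produces $\sum_i(\bm{E}w_i)^r\le n^{1-r}$. Part (ii) is entirely symmetric: for $r>1$, Jensen applied to the convex map $x\mapsto x^r$ gives $\bm{E}w_i^r\ge(\bm{E}w_i)^r$ and Lemma \ref{lemma:wigamma}(ii) supplies the lower bound $n^{1-r}$, while the upper bound follows by applying Lemma \ref{lemma2} with $r_1=1$, $r_2=r$ to obtain $\bm{E}w_i^r<\bm{E}w_i$ and summing.

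Part (iii) is the only item that requires a slightly fresh tool. The lower bound is immediate from Lemma \ref{lemma:wi_jensen} with $r_1=1$, $r_2=r$, which gives $(\bm{E}w_i^r)^{1/r}\ge\bm{E}w_i$ and then sums to $1$. For the upper bound I would apply H\"older's inequality to $\sum_i(\bm{E}w_i^r)^{1/r}\cdot 1$ with conjugate exponents $r$ and $r/(r-1)$:
\[
\sum_{i=1}^n(\bm{E}w_i^r)^{1/r}\le\left(\sum_{i=1}^n\bm{E}w_i^r\right)^{1/r}n^{(r-1)/r}\le n^{1-1/r},
\]
where the second inequality is part (ii) already established.

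Finally, for the extremal case, equality in either of the $n^{1-r}$ or $n^{1-1/r}$ bounds requires simultaneous equality in Jensen (forcing each $w_i$ to be a.s.\ constant) and in the deterministic Lemma \ref{lemma:wigamma} (forcing $\bm{E}w_i=1/n$), which together pin down $w_i\equiv 1/n$; the strictness of the bounds against $1$ then follows from the strict inequalities in Lemmas \ref{lemma2} and \ref{lemma:wi_jensen} whenever the weights are nondegenerate. I do not expect any serious obstacle: the only mild subtlety is recognizing that the correct route for bounding a sum of $L^r$-norms by $n^{1-1/r}$ in part (iii) is H\"older pairing against the constant $1$ followed by invoking part (ii), rather than a direct Jensen estimate.
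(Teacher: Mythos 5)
Your proof is correct, and it covers all three parts plus the equality and strictness claims. The only place it departs from the paper's own argument is in how the $n^{1-r}$ bounds of parts (i) and (ii) are obtained: you first apply Jensen's inequality pointwise to each weight, $\bm{E}w_i^r\le(\bm{E}w_i)^r$ for $0<r<1$ (reversed for $r>1$), and then feed the deterministic vector $\{\bm{E}w_i\}$ — which lies in $(0,1)^n$ and sums to $1$ — into the already-established Lemma \ref{lemma:wigamma}. The paper instead keeps the expectations inside and applies H\"older's inequality directly to the sequence $\{\bm{E}w_i^r\}$ paired against the all-ones vector, invoking Lemma \ref{lemma:wi_jensen} at the end to pass from $\sum_i(\bm{E}w_i^r)^{1/r}$ to $\sum_i\bm{E}w_i=1$. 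These are the same two ingredients (the single-variable power-mean inequality and H\"older over the index $i$) applied in the opposite order; your routing through the deterministic lemma is more modular and avoids repeating the H\"older computation, at the cost of needing to track equality conditions in two places (Jensen and Lemma \ref{lemma:wigamma}) rather than one, which you do correctly. Part (iii) and the strict inequalities against $1$ via Lemma \ref{lemma2} coincide with the paper's argument exactly.
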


\begin{proof}[Proof of Lemma \ref{lemma:wigamma}] 
 Consider $0<r<1$. Define $p_r=1/r$ and $q_r=1/(1-r)$ so that $1/p_r+1/q_r=1$ with $p_r,q_r>1$. Using H\"older's inequality and Lemma \ref{lemma:wi_jensen}, 
$$\sum_{i=1}^n\bm{E}w_i^r\leq\left(\sum_{i=1}^n\left(\bm{E}w_i^r\right)^{p_r}\right)^{\frac{1}{p_r}}\left(\sum_{i=1}^n1^{q_r}\right)^{\frac{1}{q_r}}=\left(\sum_{i=1}^n\left(\bm{E}w_i^r\right)^{\frac{1}{r}}\right)^{r}n^{1-r}\leq \left(\sum_{i=1}^n\bm{E}w_i\right)^rn^{1-r}=n^{1-r}.$$
Due to the equality condition given in Lemma \ref{lemma:wi_jensen}, this inequality is strict unless all $w_i=n^{-1}$ and nonrandom.

When $r>1$, Lemma \ref{lemma:wi_jensen} and H\"older's inequality imply
\begin{equation}\label{eq:lemma:randomw_holder}1=\sum_{i=1}^n\bm{E}w_i\leq \sum_{i=1}^n\left(\bm{E}w_i^r\right)^{\frac{1}{r}}\leq\left(\sum_{i=1}^n\left\{\left(\bm{E}w_i^r\right)^{\frac{1}{r}}\right\}^{r}\right)^{\frac{1}{r}}\left(\sum_{i=1}^n1^{r/(r-1)}\right)^{\frac{r-1}{r}},\end{equation}
which can be simplified as
$$1\leq \left(\sum_{i=1}^n\bm{E}w_i^r\right)^{\frac{1}{r}}n^{\frac{r-1}{r}}.$$
Therefore, $\sum_{i=1}^n \bm{E}w_i^r\geq n^{1-r}$.
Again, this inequality is strict unless all $w_i=n^{-1}$ and nonrandom.

The inequalities $\sum_{i=1}^n\bm{E}w_i^r>1$ when $r\in(0,1)$ and $\sum_{i=1}^n\bm{E}w_i^r<1$ when $r>1$ are direct consequences of Lemma \ref{lemma2} as long as all $\bm{E}w_i$ avoid 0 and 1.
In the trivial case with $\bm{E}w_i$ being either 0 or 1, $\sum_{i=1}^n\bm{E}w_i^r=1$ regardless the value of $r$.

Now we prove (\ref{lemma:w_random_item3}). The lower bound is directly from Lemma \ref{lemma:wi_jensen} or (\ref{eq:lemma:randomw_holder}). The upper bound is  from  H\"older's inequality in (\ref{eq:lemma:randomw_holder}) and the fact that  $\sum_{i=1}^n\bm{E}w_i^r<1$ when $r>1$:
$$\sum_{i=1}^n\left(\bm{E}w_i^r\right)^{\frac{1}{r}}\leq \left(\sum_{i=1}^n\bm{E}w_i^r\right)^{\frac{1}{r}}n^{\frac{r-1}{r}}\leq n^{1-\frac{1}{r}}.$$
\end{proof}

Now we prove some main theorems of this paper.

\begin{proof}[Proof of Theorem \ref{thm:ninfty_gao}]
 Let $\alpha_n=C\epsilon_n n^{-\tau}$ for some $\epsilon_n\to0$ as $n\to\infty$ and positive real numbers $C$ and $\tau$ that do not depend on $n$.
The tail probability of the sum-combination statistic with the lower threshold $t_{\alpha,n}$ such that $\overline{F}(t_{\alpha,n})=\alpha_n(\sum_{i=1}^nw_i^\gamma)^{-1}=C\epsilon_n n^{-\tau}(\sum_{i=1}^nw_i^\gamma)^{-1}$
can be written as
\begin{equation}\label{eq1_proof_thm5-4}
\begin{array}{ll}\bm{P}(S_{1,F,w}(\bm{U}_n)>t_{\alpha,n})
&=\bm{P}\left(n^\tau(\sum_{i=1}^nw_i^\gamma)\overline{F}(S_{1,F,w}(\bm{U}_n))<C\epsilon_n\right)
\\[2mm]&\sim\bm{P}\left(\overline{F}\left(\sum_{i=1}^n \frac{w_i}{n^{\frac{\tau}{\gamma}}(\sum_{j=1}^n w_j^\gamma)^{\frac{1}{\gamma}}}X_i\right)<C\epsilon_n\right)
\\[2mm]&=\bm{P}\left(\sum_{i=1}^n\widetilde{w}_i X_i>\overline{F}^{-1}(C\epsilon_n)\right),\end{array}\end{equation}
where
\begin{equation}\label{wtilde}\widetilde{w}_i=\frac{w_i}{n^{\frac{\tau}{\gamma}}(\sum_{j=1}^n w_j^\gamma)^{\frac{1}{\gamma}}}.\end{equation}

Let $0<q_1<\min\{1,\gamma\}$.
By taking the exponent $q_1>0$ on both sides of (\ref{wtilde}) and  summing over $i=1,\ldots,n$,
$$\sum_{i=1}^n\widetilde{w}_i^{q_1}=\frac{\sum_{i=1}^n w_i^{q_1}}{n^{\frac{\tau q_1}{\gamma}}(\sum_{j=1}^n w_j^\gamma)^{\frac{q_1}{\gamma}}}.$$
Since $0<q_1<1$ for any $\gamma>0$, $\sum_{i=1}^n w_i^{q_1}\leq n^{1-q_1}$ by Lemma \ref{lemma:wigamma}, with the equality holding when $w_i=n^{-1}$ for all $i$.

When $0<\gamma<1$,  the lower bound of $\sum_{j=1}^nw_j^\gamma$ is 1 from Lemma \ref{lemma:wigamma}, which leads to
$$\sum_{i=1}^n\widetilde{w}_i^{q_1}\leq \frac{n^{1-q_1}}{n^{\frac{\tau q_1}{\gamma}}}
=n^{1-q_1-\frac{\tau q_1}{\gamma}}\leq 1$$
uniformly for all $n$
if $1-q_1-\frac{\tau q_1}{\gamma}\leq 0,$
or equivalently,
$\tau\geq \frac{\gamma}{q_1}-\gamma.$

 When $\gamma\geq 1$, the lower bound of $\sum_{j=1}^nw_j^\gamma$ is $n^{1-\gamma}$ from Lemma \ref{lemma:wigamma}, and therefore,
$$\sum_{i=1}^n\widetilde{w}_i^{q_1}\leq \frac{n^{1-q_1}}{n^{\frac{\tau q_1}{\gamma}}n^{\frac{(1-\gamma)q_1}{\gamma}}}
=n^{1-q_1-\frac{\tau q_1}{\gamma}-\frac{q_1-\gamma q_1}{\gamma}}=n^{1-\frac{\tau q_1}{\gamma}-\frac{ q_1}{\gamma}}\leq 1$$
uniformly for all $n$
if $1-\frac{\tau q_1}{\gamma}-\frac{ q_1}{\gamma}\leq 0,$
or equivalently,
$\tau\geq \frac{\gamma}{q_1}-1.$ 

Hence, for any $\gamma>0$, $$\lim_{n\to\infty}\sum_{i=1}^n \widetilde{w}_i^{q_1}\leq 1<\infty$$
 if $\tau\geq\frac{\gamma}{q_1}-\min\{1,\gamma\}.$ Since the lower bound $\frac{\gamma}{q_1}-\min\{1,\gamma\}$ is increasing in $q_1$, this condition is equivalent to $\tau>|1-\gamma|$ by choosing $q_1$ as close to $\gamma$ as possible when $\gamma\in(0,1)$ and close to $1$ when $\gamma\geq1$.
In this case,  $\alpha_n=C\epsilon_nn^{-\tau}<C\epsilon_nn^{-|1-\gamma|}$, which is equivalent to Assumption \ref{assump:alpha_nonrandw}.

With  the assumption $\min_{i=1,\ldots,n}w_i\geq c_0n^{-1}$ for some constant $c_0>0$ as in (\ref{assum:minweight}), for any $\tau>0$, $$\sum_{i=1}^n\widetilde{w}_i^{q_1}\leq\frac{n^{1-q_1}}{n^{\frac{\tau q_1}{\gamma}}n^{\frac{(1-\gamma)q_1}{\gamma}}c_0^{q_1}}=c_0^{-q_1}n^{1-\frac{\tau q_1}{\gamma}-\frac{ q_1}{\gamma}}\leq c_0^{-q_1}<\infty$$
uniformly for all $n$
if $1-\frac{\tau q_1}{\gamma}-\frac{ q_1}{\gamma}\leq 0,$
or equivalently,
$\tau\geq \frac{\gamma}{q_1}-1.$ Since $\frac{\gamma}{q_1}-1$ is increasing in $q_1$, this condition is equivalent to $\tau>(\gamma-1)\mathbb{I}_{\{\gamma>1\}}$ by choosing $q_1$ as close to its upper limit $\min\{1,\gamma\}$ as possible. Therefore, $\alpha_n=C\epsilon_nn^{-\tau}<C\epsilon_nn^{-(\gamma-1)\mathbb{I}_{\{\gamma>1\}}}$, which is equivalent to Assumption \ref{assump:alpha_equalw}.

Proposition \ref{proposition:infinite_n_gao} implies that
$$\bm{P}\left(\sum_{i=1}^n\widetilde{w}_iX_i>\overline{F}^{-1}(C\epsilon_n)\right)\sim C\epsilon_n\left(\sum_{i=1}^\infty\widetilde{w}_i^\gamma\right) 
$$
holds as $\overline{F}^{-1}(C\epsilon_n)\to\infty$, or equivalently, as $n\to\infty$.
In addition, from the definition of $\widetilde{w}_i$ in equation (\ref{wtilde}), 
$\sum_{i=1}^n\widetilde{w}_i^{\gamma}=n^{-\tau}=(C\epsilon_n)^{-1}\alpha_n$, which leads to
$$\frac{C\epsilon_n\sum_{i=1}^n \widetilde{w}_i^\gamma}{\alpha_n}=1$$
for any positive integer $n$. By taking the limit as $n\to\infty$ on both sides, we have $C\epsilon_n\sum_{i=1}^\infty \widetilde{w}_i^\gamma\sim \alpha_n$. Hence
$\bm{P}(S_{1,F,w}(\bm{U}_n)>t_{\alpha,n})\sim\alpha_n.$
With $a_{n,\gamma}=\left(\sum_{i=1}^nw_i^\gamma\right)^{-\frac{1}{\gamma}}$ as defined in (\ref{a_n}),
from the first line of (\ref{eq1_proof_thm5-4}), we can see that the event  $S_{1,F,w}(\bm{U}_n)>t_{\alpha,n}$ is asymptotically equivalent to $\overline{F}(a_{n,\gamma}S_{1,F,w}(\bm{U}_n))<\alpha_n$ for large enough $n$.
Therefore, 
$$\bm{P}\left(\overline{F}(a_{n,\gamma}S_{1,F,w}(\bm{U}_n))<\alpha_n\right)\sim \alpha_n$$
as $n\to\infty$.
This completes the proof of this theorem under Assumption \ref{assmp:geluktang} or \ref{assmp:kotang} with a nonrandom weight Assumption \ref{assump:w_equal} or \ref{assump:w_nonrandom} when $F$ satisfies a balance condition with $p_F=1$ and $q_F=0$.

Now let us further assume that $X_i$ are nonnegative. Extending the above result to UTAI, or Assumption \ref{assmp:UTAI}, is straightforward based on Theorem 3.1 of \cite{chen2009sums} and by mimicking the argument in the proof of Theorem 2 of \cite{bae2017note}. Note that Theorem 3.1 of \cite{chen2009sums} is written for equal weights and possibly nonidentical distributions for $X_i$ in $\mathcal{C}$. It is easy to see that if the distribution of $X_i$ is in $\mathcal{C}$, for any constant $c>0$, the distribution of $cX_i$ is also in $\mathcal{C}$. Therefore, 
their result also holds for Assumption \ref{assump:w_nonrandom}.

Since Assumption \ref{assmp:TAI} implies \ref{assmp:UTAI}, the asymptotic tail-precise property of $T_{1,F,w}(\bm{U}_n)$ also holds for Assumption \ref{assmp:TAI}.
\end{proof}

\begin{proof}[Proof of Theorem \ref{thm:ninfty_chen}]
Let $\alpha_n=C\epsilon_n n^{-\tau}$ for some $\epsilon_n\to0$ as $n\to\infty$ and positive real numbers $C$ and $\tau$ that do not depend on $n$.
The tail probability of the sum-combination statistic with the lower threshold $t_{\alpha,n}$ that satisfies $\overline{F}(t_{\alpha,n})=\alpha(\sum_{i=1}^n\bm{E}w_i^\gamma)^{-1}$
can be written as
$$\bm{P}(S_{1,F,w}(\bm{U}_n)>t_{\alpha,n})
=\bm{P}\left(\sum_{i=1}^n \frac{w_i}{n^{\frac{\tau}{\gamma}}(\sum_{j=1}^n \bm{E}w_j^\gamma)^{\frac{1}{\gamma}}}X_i>\overline{F}^{-1}(C\epsilon_n)\right).$$
Let $$\widetilde{w}_i=\frac{w_i}{n^{\frac{\tau}{\gamma}}(\sum_{j=1}^n\bm{E} w_j^\gamma)^{\frac{1}{\gamma}}}.$$

Consider $0<\gamma<1$. By Lemma \ref{lemma:wigamma_random}, $\sum_{i=1}^n\bm{E} w_i^\gamma\geq 1$, and therefore, $\bm{P}(0<\widetilde{w}_i<1)=1$. Lemma \ref{lemma2} implies that $\bm{E}\widetilde{w}_i^{q_1}\vee\bm{E}\widetilde{w}_i^{q_2}=\bm{E}\widetilde{w}_i^{q_1}$ for any $0<q_1<q_2$. For any positive real value $q_1<\gamma<1$,
$$\sum_{i=1}^n\bm{E}\widetilde{w}_i^{q_1}=\frac{\sum_{i=1}^n \bm{E}w_i^{q_1}}{n^{\frac{\tau q_1}{\gamma}}(\sum_{j=1}^n \bm{E}w_j^\gamma)^{\frac{q_1}{\gamma}}}\leq \frac{n^{1-q_1}}{n^{\frac{\tau q_1}{\gamma}}}=n^{1-q_1-\frac{\tau q_1}{\gamma}}$$
is uniformly bounded for all $n$ as long as $\tau\geq \frac{\gamma}{q_1}-\gamma$.

Now consider $\gamma\geq 1$. By taking the exponent $q_2>\gamma\geq1$, expectation, and the exponent $1/q_2$ on both sides, and taking the sum over $i=1,\ldots,n$,
$$\sum_{i=1}^n\left(\bm{E}\widetilde{w}_i^{q_2}\right)^{\frac{1}{q_2}}=\frac{\sum_{i=1}^n \left(\bm{E}w_i^{q_2}\right)^{\frac{1}{q_2}}}{n^{\frac{\tau}{\gamma}}(\sum_{j=1}^n \bm{E}w_j^\gamma)^{\frac{1}{\gamma}}}\leq \frac{n^{1-\frac{1}{q_2}}}{n^{\frac{\tau}{\gamma}}n^{\frac{1-\gamma}{\gamma}}}=n^{1-\frac{1}{q_2}-\frac{\tau}{\gamma}-\frac{1-\gamma}{\gamma}}=n^{2-\frac{1}{q_2}-\frac{\tau}{\gamma}-\frac{1}{\gamma}},$$
where the inequality is due to Lemma \ref{lemma:wigamma_random}. 
Therefore, 
$\sum_{i=1}^n\left\{\left(\bm{E}\widetilde{w}_i^{q_1}\right)^{\frac{1}{q_1}} \vee \left(\bm{E}\widetilde{w}_i^{q_2}\right)^{\frac{1}{q_2}} \right\}\leq \sum_{i=1}^n\left(\bm{E}\widetilde{w}_i^{q_2}\right)^{\frac{1}{q_2}}$ is uniformly bounded for all $n$
if $\tau\geq 2\gamma-\frac{\gamma}{q_2}-1$ when $\gamma\geq1$.

Note that when $\gamma=1$, the range of $2\gamma-\frac{\gamma}{q_2}-1$ is roughly $(0,1)$, allowing up to  $\alpha_n=o(n^{-0})=o(1)$. Similarly, $\alpha_n$ up to $o(n^{-(1-\gamma)})$ is allowed when $\gamma<1$ and 
$o(n^{-2(\gamma-1)})$  when $\gamma>1$, implying Assumption \ref{assump:alpha_randw}. 


The rest of the proof is similar to that in the Proof of Theorem \ref{thm:ninfty_gao}, using Proposition \ref{proposition:infinite_n_chen}.
\end{proof}

\begin{proof}[Proof of Theorem \ref{thm:ninfty_geng}]
Consider the same $\alpha_n$,  $t_{\alpha,n}$, and $\widetilde{w}_i$ as in the Proof of Theorem \ref{thm:ninfty_chen}. 
Consider $0<\gamma<1$. For any positive real value $\delta\in(0,\min\{\gamma,1-\gamma\})$, by Lemma \ref{lemma:wigamma_random},
$$\sum_{i=1}^n\bm{E}\widetilde{w}_i^{\gamma-\delta}=\frac{\sum_{i=1}^n \bm{E}w_i^{\gamma-\delta}}{n^{\frac{\tau (\gamma-\delta)}{\gamma}}(\sum_{j=1}^n \bm{E}w_j^\gamma)^{\frac{\gamma-\delta}{\gamma}}}\leq \frac{n^{1-(\gamma-\delta)}}{n^{\frac{\tau (\gamma-\delta)}{\gamma}}}=n^{1-(\gamma-\delta)-\frac{\tau (\gamma-\delta)}{\gamma}}$$
is uniformly bounded for all $n$ as long as $\tau\geq \frac{\gamma}{\gamma-\delta}-\gamma$.
Note that $\delta\in(0,\min\{(\gamma,1-\gamma\})$ is equivalent to $\gamma-\delta\in(\max\{0,2\gamma-1\},\gamma)$.

Now consider $\gamma\geq 1$. For some $\delta\in(0,\gamma)$, by H\"older's inequality,
\begin{equation}\label{eq:thm:geng:gamma1_UB}\begin{array}{ll}\sum_{i=1}^n \left(\bm{E}w_i^{\gamma-\delta}\right)^{\frac{1}{\gamma+\delta}}
&\leq \left[\sum_{i=1}^n\left\{\left(\bm{E}w_i^{\gamma-\delta}\right)^{\frac{1}{\gamma+\delta}}\right\}^{{\gamma+\delta}}\right]^\frac{1}{\gamma+\delta}\left(\sum_{i=1}^n1^{\frac{\gamma+\delta}{\gamma+\delta-1}}\right)^{\frac{\gamma+\delta-1}{\gamma+\delta}}
\\ &= \left(\sum_{i=1}^n\bm{E}w_i^{\gamma-\delta}\right)^{\frac{1}{\gamma+\delta}}n^{\frac{\gamma+\delta-1}{\gamma+\delta}}.
\end{array}\end{equation}
We first consider $\gamma>1$. Notice that the smaller $\delta$ is, the smaller the upper bound of (\ref{eq:thm:geng:gamma1_UB}) is. 
In particular, since  $\gamma>1$, it is possible to choose a positive constant $\delta$ such that $\gamma-\delta>1$. For such $\delta$, since $\gamma+\delta>\gamma-\delta>1$, Lemma \ref{lemma2} implies that $\bm{E}w_i^{\gamma+\delta}<\bm{E}w_i^{\gamma-\delta}<1$. Further by Lemma \ref{lemma1}, since $\gamma+\delta>1$, $\left(\bm{E}w_i^{\gamma-\delta}\right)^{\frac{1}{\gamma+\delta}}<\left(\bm{E}w_i^{\gamma+\delta}\right)^{\frac{1}{\gamma+\delta}}$.
Thus, the upper bound of (\ref{eq:thm:geng:gamma1_UB}) is, using Lemma \ref{lemma:wigamma_random},
$$\sum_{i=1}^n\left(\bm{E}\widetilde{w}_i^{\gamma-\delta}\right)^{\frac{1}{\gamma+\delta}}=\frac{\sum_{i=1}^n \left(\bm{E}w_i^{\gamma-\delta}\right)^{\frac{1}{\gamma+\delta}}}{n^{\frac{\tau(\gamma-\delta)}{\gamma(\gamma+\delta)}}(\sum_{j=1}^n \bm{E}w_j^\gamma)^{\frac{\gamma-\delta}{\gamma(\gamma+\delta)}}}\leq \frac{n^{\frac{\gamma+\delta-1}{\gamma+\delta}}}{n^{\frac{\tau(\gamma-\delta)}{\gamma(\gamma+\delta)}}n^{\frac{(1-\gamma)(\gamma-\delta)}{\gamma(\gamma+\delta)}}}=n^{\frac{2\gamma^2-2\gamma+\delta-\tau(\gamma-\delta)}{\gamma(\gamma+\delta)}},$$
which is uniformly bounded for any $n$ if $\tau\geq \frac{2\gamma^2-2\gamma+\delta}{\gamma-\delta}$. 

Now consider $\gamma=1$. In this case, $0<\gamma-\delta<1$ for any positive constant $\delta<\gamma$. Using Lemma \ref{lemma:wigamma_random}, the upper bound of (\ref{eq:thm:geng:gamma1_UB}) is $n^{\frac{1-\gamma+\delta}{\gamma+\delta}}n^{\frac{\gamma+\delta-1}{\gamma+\delta}}=n^{\frac{2\delta}{\gamma+\delta}}=n^{\frac{2\delta}{1+\delta}}$. 
Noting that $\sum_{j=1}^n\bm{E}w_j^\gamma=1$ when $\gamma=1$, we have
$$\sum_{i=1}^n\left(\bm{E}\widetilde{w}_i^{\gamma-\delta}\right)^{\frac{1}{\gamma+\delta}}=\frac{\sum_{i=1}^n \left(\bm{E}w_i^{\gamma-\delta}\right)^{\frac{1}{\gamma+\delta}}}{n^{\frac{\tau(\gamma-\delta)}{\gamma(\gamma+\delta)}}(\sum_{j=1}^n \bm{E}w_j^\gamma)^{\frac{\gamma-\delta}{\gamma(\gamma+\delta)}}}\leq \frac{n^{\frac{2\delta}{1+\delta}}}{n^{\frac{\tau(1-\delta)}{1+\delta}}}=n^{\frac{2\delta-\tau(1-\delta)}{1+\delta}},$$
which is uniformly bounded for any $n$ if $\tau\geq\frac{2\delta}{1-\delta}$.

For the $\gamma<1$ case, $\frac{\gamma}{q_1}-\gamma$ is decreasing in $q_1$, and therefore, $\alpha_n$ up to $o(n^{-(1-\gamma)})$ is allowed. For the other two cases, $\gamma=1$ and $\gamma>1$, $\frac{2q_2}{1-q_2}$ and $\frac{2\gamma^2-2\gamma+q_3}{\gamma-q_3}$ are increasing in $q_2$ and $q_3$, respectively. This can be easily seen by noticing that the first derivative of both functions is always positive since $q_2$ and $q_3$ are not equal to 1.
Therefore,$\alpha_n$ up to $o(n^{-\frac{0}{1}})=o(1)$ is allowed when $\gamma=1$ and $o(n^{-2(\gamma-1)})$ is allowed when $\gamma>1$.

The rest of the proof is similar to that in the Proof of Theorem \ref{thm:ninfty_gao}, using Proposition \ref{proposition:infinite_n_geng}.
\end{proof}

\begin{proof}[Proof of Theorem \ref{thm:max_iid_gnedenko}]
We first note that, since $\overline{F}$ is continuous, and therefore, strictly decreasing, $c_n=\overline{F}^{-1}(n^{-1})\uparrow\infty$ as $n\uparrow\infty$.
Since $\overline{F}(c_n)=1/n$ and $\overline{F}(nt_{\alpha,n})=\alpha/n$,
$$\overline{F}(nt_{\alpha,n})=\alpha\overline{F}(c_n)\sim\overline{F}(\alpha^{-\frac{1}{\gamma}}c_n)$$
as $c_n\uparrow\infty$, where the last approximation is due to the asymptotic scale invariance of a regularly varying distribution. Therefore, $nt_{\alpha,n}\sim\alpha^{-\frac{1}{\gamma}}c_n$, or equivalently,
$$c_n^{-1}\sim n^{-1}t_{\alpha,n}^{-1}\alpha^{-\frac{1}{\gamma}}$$
as $n\uparrow\infty$, and further,
$$\bm{P}\left(S_{3,F,w}(\bm{U}_n)>t_{\alpha,n}\right)=\bm{P}\left(n^{-1}\bigvee_{i=1}^n X_i>t_{\alpha,n}\right)\sim\bm{P}\left(c_n^{-1}\bigvee_{i=1}^n X_i>\alpha^{-\frac{1}{\gamma}}\right).$$
From (\ref{eq:max_ninfty}),
$$\bm{P}\left(c_n^{-1}\bigvee_{i=1}^n X_i<\alpha^{-\frac{1}{\gamma}}\right)\sim \exp\left(-(\alpha^{-\frac{1}{\gamma}})^{-\gamma}\right)=\exp(-\alpha)\sim 1-\alpha$$
as long as $\alpha=\alpha_n\downarrow0$ as $n\uparrow\infty$, which finishes the proof.
\end{proof}

\begin{proof}[Proof of Theorem \ref{thm:ninfty_max}]
Since $X_i=\overline{F}^{-1}(U_i)$ are independent,  
$\bm{P}(\bigvee_{i=1}^nw_iX_i<t)=\prod_{i=1}^n\bm{P}(X_i<tw_i^{-1})$. Taking log on both sides, we have
$$\log\left(\bm{P}\left(\bigvee_{i=1}^nw_iX_i<t\right)\right)=\sum_{i=1}^n\log\bm{P}\left(X_i<tw_i^{-1}\right)=\sum_{i=1}^n\log F(tw_i^{-1}).$$
Note that $tw_i^{-1}\geq t\to\infty$ as $t\to\infty$ for all $i=1,\ldots,n$, regardless of the value of $n$.
Using the relationship $\log x\uparrow (x-1)$ as $x\uparrow1$,  we have $\log F(tw_i^{-1})\sim F(tw_i^{-1})-1=-\overline{F}(tw_i^{-1})$ as $t\to\infty$ for all $i=1,\ldots,n$, no matter what $n$ is. Therefore,
$$\bm{P}\left(\bigvee_{i=1}^nw_iX_i>t\right) \sim  1-\exp\left(-\sum_{i=1}^n\overline{F}(tw_i^{-1})\right)$$ as $t \to\infty$ for any $n$. In addition, $\min_i t w_i^{-1}\to\infty$ implies $\overline{F}(tw_i^{-1})\to0$  as $t\to\infty$, and hence, 
 the relationship $1-\exp(-x)\sim x$ as $x\to0$ implies 
$$\bm{P}\left(\bigvee_{i=1}^nw_iX_i>t\right) \sim \sum_{i=1}^n\overline{F}(tw_i^{-1})$$
as $t\to\infty$ for any $n$. Therefore,
$$\bm{P}\left(\bigvee_{i=1}^\infty w_iX_i>t\right) \sim \sum_{i=1}^\infty\overline{F}(tw_i^{-1}),$$
which is equivalent to equation (\ref{thm:eq:max_ninfty}).
\end{proof}

\begin{proof}[Proof of Theorem \ref{thm:gclt}]
From the GCLT in equations (\ref{eq:gcltgamma1}) and (\ref{eq:gcltgamma2}), 
the tail probability of the sum-combining function can be approximated as, when $U_i$ are iid and $n$ is large enough,
\begin{equation*}\label{eq:gclt}\begin{array}{lll}
\bm{P}(T_{1,F_\gamma,w}(\bm{U}_n)<\alpha_n)&=&\bm{P}(\overline{F}(a_{n,\gamma}S_{1,F_\gamma,w}(\bm{U}_n))<\alpha_n)=\bm{P}\left(n\left(\sum_{i=1}^nU_i^{-\frac{1}{\gamma}}\right)^{-\gamma}<\alpha_n\right)
\\&=&\bm{P}\left(\sum_{i=1}^nU_i^{-\frac{1}{\gamma}}>\alpha_n^{-\frac{1}{\gamma}}n^{\frac{1}{\gamma}}\right)
\\&\sim&\left\{\begin{array}{ll}
\bm{P}\left(S(1,1,1,0)>\frac{\alpha_n^{-1}-\log n}{\pi /2}\right),&\gamma=1
\\ \bm{P}\left(S(\gamma,1,1,\tan(\pi\gamma/2))>\alpha_n^{-\frac{1}{\gamma}}\left\{\frac{2}{\pi}\Gamma(\gamma)\sin\left(\frac{\pi\gamma}{2}\right)\right\}^{\frac{1}{\gamma}}\right), &\gamma\in(0,1),
\end{array}\right.
\end{array}\end{equation*}
where $S(\gamma,\beta,\sigma,\mu)$ indicates a random variable that follows $S(\gamma,\beta,\sigma,\mu;0)$.
Consider $\gamma\in(0,1)$. The proof is straightforward 
using the tail approximation of a stable distribution \cite[Theorem 1.2]{nolan2020univariate} 
as long as $\alpha_n=o(1)$. 

Now we consider $\gamma=1$. Since $\alpha_n=o(1/\log n)$, we have  $\alpha_n^{-1}-\log n\to\infty$ as $n\to\infty$. Therefore, following Theorem 1.2 of \cite{nolan2020univariate}, 
$\bm{P}\left(S(1,1,1,0)>\frac{\alpha_n^{-1}-\log n}{\pi /2}\right)\sim(\alpha_n^{-1}-\log n)^{-1}$ as $n\to\infty$. 
\end{proof}

\end{document}